\documentclass[a4paper,10pt]{article}

\usepackage[margin=3cm]{geometry} 
\usepackage{amsmath, amsfonts, amssymb, amsthm}
\usepackage{graphicx}
\usepackage{caption} 
\usepackage[all]{xy} 


\newcommand{\Nats}{\mathbb{N}}
\newcommand{\Real}{\mathbb{R}}
\newcommand{\I}{\mathrm{i}}
\newcommand{\half}{{\frac{1}{2}}}

\newcommand{\FT}{\mathcal{F}}
\newcommand{\e}{\mathrm{e}}
\newcommand{\supp}{\operatorname{supp}}

\newcommand{\Op}{\operatorname{Op}}

\newcommand{\wfs}{\operatorname{WF}}
\newcommand{\cotbun}[1]{T^\ast\!{#1}\!\setminus\!0}   
\newcommand{\Conic}{\Real^n\!\setminus\!\{0\}}        
\newcommand{\tines}{\!\times\!}

\newcommand{\pdpd}[2]{\tfrac{\partial #1}{\partial #2}}
\newcommand{\pdpdn}[2]{\frac{\partial #1}{\partial #2}}

\newcommand{\bfx}{{\mathbf{x}}}
\newcommand{\bfy}{{\mathbf{y}}}
\newcommand{\bfv}{{\mathbf{v}}}
\newcommand{\bfz}{{\mathbf{z}}}
\newcommand{\bfxi}{{\boldsymbol{\xi}}}
\newcommand{\bfeta}{{\boldsymbol{\eta}}}
\newcommand{\bftheta}{{\boldsymbol{\theta}}}
\newcommand{\bfzeta}{{\boldsymbol{\zeta}}}
\newcommand{\bfp}{{\mathbf{p}}}

\newcommand{\bfn}{{\mathbf{n}}}
\newcommand{\bfns}{{\mathbf{n}_{\rm s}\hspace{-0.4mm}}}
\newcommand{\rma}{{\rm a}}
\newcommand{\rmb}{{\rm b}}
\newcommand{\rmc}{{\rm c}}
\newcommand{\rmh}{{\rm h}}
\newcommand{\rmr}{{\rm r}}
\newcommand{\rms}{{\rm s}}

\newcommand{\mat}[2]{\begin{pmatrix} #1 \\ #2 \end{pmatrix}}

\newcommand{\phiT}{{\varphi_{\rm \hspace{-0.2mm}_T\hspace{-0.4mm}}}}


\newtheorem{thm}{Theorem}
\newtheorem{lem}{Lemma}


\title{Linearized inverse scattering based on seismic Reverse Time Migration%
\footnote{First presented at the Conference on Applied Inverse
  Problems, June 28, 2007}} \author{Tim J.P.M. Op 't Root, Christiaan
  C. Stolk, Maarten V. de Hoop}


%
%
%
%
\date{December 20, 2010}

\begin{document}
\maketitle

\begin{abstract}
In this paper we study the linearized inverse problem associated with
imaging of reflection seismic data. We introduce an inverse scattering
transform derived from reverse-time migration (RTM). In the process,
the explicit evaluation of the so-called normal operator is avoided,
while other differential and pseudodifferential operator factors are
introduced. We prove that, under certain conditions, the transform
yields a partial inverse, and support this with numerical simulations.
In addition, we explain the recently discussed `low-frequency
artifacts' in RTM, which are naturally removed by the new method.
\end{abstract}

\section{Introduction}

In reflection seismology one places point sources and point receivers
on the earth's surface. A source generates acoustic waves in the
subsurface, which are reflected where the medium properties vary
discontinuously. In seismic imaging, one aims to reconstruct the
properties of the subsurface from the reflected waves that are
observed at the surface \cite{Claerbout1985Book,Biondi2006,Symes2009}.
There are various approaches to seismic imaging, each based on a
different mathematical model for seismic reflection data with
underlying assumptions. In general, seismic scattering and inverse
scattering have been formulated in the form of a linearized inverse
problem for the medium coefficient in the acoustic wave equation. The
linearization is around a smoothly varying background, called the
velocity model, which is a priori also unknown. However, in the
inverse scattering setting considered here, we assume the background
model to be known. The linearization defines a single scattering
operator mapping the model contrast (with respect to the background)
to the data, that consists of the restriction to the acquistion set of
the scattered field. The adjoint of this map defines the process of
imaging in general. The composition of the imaging operator with the
single scattering operator yields the so-called normal operator, the
properties of which play a central role in developing an inverse
scattering procedure.
 
There are different types of seismic imaging methods. One can
distinguish methods associated with the evolution of waves and data in
time from those associated with the evolution in depth (or another
principal spatial direction). The first category contains approaches
known under the collective names of Kirchhoff migration
\cite{BleisteinCohenStockwell2001} or generalized Radon transform
inversion, and reverse-time migration (RTM)
\cite{SchultzSherwood1980,whitmore:1983,mcmechan:1983,baysalKS:1983,sunM:2001};
the second category comprises the downward continuation approach
\cite{Clay,Claerbout1985Book,Biondi1996,Pop96,Jin2002} possibly
applied in curvilinear coordinates. The analysis pertaining to inverse
scattering in the second category can be found in Stolk and De Hoop
\cite{stolkdH:2005,stolkdH:2006}. The subject of the present paper is
an analysis of RTM-based inverse scattering in the first category,
with a view to studying the reconstruction of singularities in the
contrast. As was done in the analysis of Kirchhoff methods
\cite{Beylkin85,Rakesh1988,KroodeSmitVerdel98,stolkdH:2002}, we make
use of techniques and concepts from microlocal analysis, and Fourier
integral operators (FIOs); see, e.g., \cite{Duistermaat1996} for
background information on these concepts. As through an appropriate
formulation of the wave field continuation approach, we arrive at a
representation of RTM in terms of a FIO associated with a canonical
graph. Over the past few years, there has been a revived interest in
reverse time migration (RTM), partly because their application has
become computationally feasible. RTM is attractive as an imaging
procedure because it avoids approximations derived from asymptotic
expansions or one-way wave propagation.
 
The study of the above mentioned normal operator takes into account
the available source-receiver acquisition geometry. To avoid the
generation of artifacts, one has to invoke the Bolker condition
\cite{Guillemin1985}, essentially ensuring that the normal operator is
a pseudodifferential operator. (In reflection seismology this
condition is sometimes referred to as traveltime injectivity condition
\cite{NolanSymes1997}.) RTM is based on a common source geometry, in
which case the Bolker condition requires the absence of ``source
caustics'', that is, caustics are not allowed to occur between the
source and the image points under consideration \cite{NolanSymes1997}.
We shall refer to the assumption of absence of source caustics as the
source wave multipath exclusion (SME). Additionally, we require that
there are no rays connecting the source with a receiver position,
which we refer to as the direct source wave exclusion (DSE), and we
exclude grazing rays that originate in the subsurface. These
conditions can be satisfied by removing the corresponding part of the
wavefield using pseudodifferential cutoffs.

In this paper we revisit the original reverse-time imaging
procedure. We do this, also, in the context of the integral
formulation of Schneider \cite{schneider:1978} and the inverse
scattering integral equation of Bojarski \cite{bojarski:1982}.  An RTM
migration algorithm constists of three main parts: The modeling of
source wave propagation in forward time, the modeling of receiver or
reflected wave propagation in reverse time, and the applicaton of the
so called imaging condition \cite{Claerbout1985Book,Biondi2006}. The
imaging condition is a map that takes as input the source wave field
and the backpropagated receiver wave field, and maps these to an
image. The imaging condition is based on Claerbout's
\cite{Claerbout1971} imaging principle: Reflectors exist in those
points in the subsurface where the source and receiver wave fields
both have a large contribution at coincident times.

Various imaging conditions have been developed over the past 25
years. The excitation time imaging condition identifies the time that
the source field passes an image point, for example, using its maximum
amplitude, and evaluates the receiver field at that time. The image
can be normalized by dividing by the source amplitude. Alternatively,
the image can be computed in the temporal frequency domain by dividing
the receiver field by the source field and integrating over frequency,
the ratio imaging condition. To avoid division by small values of the source field,
regularization techniques have been applied. An alternative is the
crosscorrelation imaging condition, in which the product of the fields
is integrated over time. Later other variants have been proposed, see
e.g.\ \cite{Chang1986,Chattopadhyay2008,KiyashchenkoEtAl2007}. The
authors of \cite{KiyashchenkoEtAl2007} use the spatial derivatives of
the fields, similarly to what we find in this work.

We introduce a parametrix for the linearized scattering problem on
which RTM is based. The explicit evaluation of the normal operator is
avoided, at the cost of introducing other pseudodifferential operator
factors in the procedure, which is, thus, different from Least-Squares
migration-based approaches \cite{plessixM:2004}. The method involves a
new variant of the ratio imaging condition that involves time
derivatives of the fields and their spatial gradients. The ratio
imaging condition, albeit a new variant, is hence finally provided
with a mathematical proof. The result is summarized in
Theorem~\ref{thm_R}. As an intermediate result, we also obtain a new
variant of the so called excitation time imaging condition in
Theorem~\ref{thm_tildeR}. Moreover, we also address the relation with
RTM ``artifacts''
\cite{yoonMS:2004,mulderP:2004,fletcherFKA:2005,Xie2006,guittonKB:2007},
as well as certain simplifications that occur when dual sensor
streamer data are available.

The seismic waves are governed by the acoustic wave equation with
constant density on the spatial domain $\Real^n$ with $n=1,2,3$, given
by
\begin{equation} \label{eq:wave_equation}
   \big[ c(\bfx)^{-2} \partial_t^2 - \Delta \big] u(\bfx,t) 
    = f(\bfx,t).
\end{equation}
Although the subsurface is represented by the half space $\Real^{n-1}
\tines [0,\infty)$, we carry out our analysis in the full space,
  $\Real^n$. The acquisition domain is a subset of the surface
  $\Real^{n-1}\tines\{0\}$. The slowly varying velocity is a given
  smooth function $c(\bfx)$. The existence, uniqueness and regularity
  of solutions can be found in \cite{Hormander94}. We use the Fourier
  transform: \hbox{$\FT
    u(\bfxi,\omega)=\iint\e^{-\I(\bfxi\cdot\bfx+\omega t)}u(\bfx,t) \,
    d\bfx dt$}, and sometimes write $\widehat{u}$ for $\FT u$.

The outline of the paper is as follows. In
section~\ref{sec_solution_ivp}, solutions of the wave equation are
discussed, starting from the WKB approximation with plane wave initial
values. The (forward) scattering problem is analyzed in
section~\ref{sec_forward_scattering}. We focus on the map from the
contrast (or ``reflectivity'') to what we refer to as the continued
scattered field, which is the result from a perfect backpropagation of
the scattered field from its Cauchy values at some time after the
scattering has taken place. We obtain an explicit expression which is
locally valid, and a global characterization as a Fourier integral
operator. In section~\ref{sec_up_down_PsDO} we study the revert
operator, which describes the backpropagation of the receiver
field. The relation with the continued scattered field is
established. The inversion, that is, parametrix construction, is
presented in section~\ref{sec_inverse_scattering}. We first carry out
a brief analysis of the case of a constant velocity. Then we introduce
a novel version of the excitation time imaging condition and show that it
yields an inversion. Following that, we present an imaging condition
expressed entirely in terms of the source and backpropagated receiver
fields, providing the RTM based linearized inversion. In
section~\ref{sec_numerics} we show some numerical tests. We end the
paper with a short discussion.

\section{Asymptotic solutions of the initial value problem}  
\label{sec_solution_ivp}

In this section, we study solutions of the wave equation with smooth
coefficients. We introduce explicit expressions for the solution
operator for wave propagation over small times. In subsection~\ref{subsec_plane_wave}
we construct an approximate solution of the
IVP of the homogeneous wave equation.
Using the WKB approximation we introduce phase and
amplitude functions, which are solved by the method of characteristics
in subsections \ref{subsec_phase_function} and
\ref{subsec_amplitude_function}. The asymptotic solution is finally
written as a FIO in subsection \ref{subsec_FIO_S}. Subsection
\ref{subsec_decoupling} presents the decoupling of the wave equation
and general solution operators. Subsection
\ref{subsec_scattering_model} deals with the source field problem of
RTM.

\subsection{WKB approximation with plane-wave initial values}
\label{subsec_plane_wave}

Instead of solving (\ref{eq:wave_equation}) directly, we solve for
$c^{-1} u$, and consider the equivalent wave equation,
\begin{equation} \label{eq:norm_wave_eq}
   [ \partial_t^2 - c \Delta c ] (c^{-1} u) = 0 .
\end{equation}
In the later analysis it will be advantageous that $c \Delta c$ is a
symmetric operator. We invoke the WKB ansatz,
\begin{equation} \label{eq:WKB_ansatz}
   c^{-1} u (\bfx,t) = a(\bfx,t) \e^{\I\lambda{\alpha}(\bfx,t)} .
\end{equation}
A straightforward calculation yields
\begin{equation} \label{eq:WKB_rhs_expansion}
\begin{split}
   \e^{-\I\lambda{\alpha}}
      [\partial_t^2 -c\Delta c]\,{a}\,\e^{\I\lambda{\alpha}} 
  = & - \lambda^2{a}\big[(\partial_t{\alpha})^2-c^2|\nabla{\alpha}|^2\big] 
\\
   & + \I\lambda\big[2(\partial_t{a})\partial_t{\alpha} 
     + {a}\partial_t^2{\alpha} - 2c\nabla(c{a})\cdot\nabla{\alpha} 
     - c^2{a}\Delta{\alpha}\big] \\
   & + \partial_t^2{a} - c\Delta(c{a}).
\end{split}
\end{equation}
An approximate solution of the form (\ref{eq:WKB_ansatz}) is obtained
by requiring first that the term $O(\lambda^2)$ vanishes, resulting in
an eikonal equation for $\alpha$, and secondly that the term
$O(\lambda)$ also vanishes, resulting in a transport equation for
$a$. We will give these equations momentarily, and comment below on
the vanishing of terms $O(\lambda^j)$ for $j \le 0$.

We solve (\ref{eq:norm_wave_eq}) with plane-wave initial values:
\begin{equation} \label{equ_IVPFIO_initcond}
   u(\bfx,0) = 0 ,\qquad\qquad
   c(\bfx)^{-1} \partial_t u(\bfx,0) = \e^{\I \bfx \cdot \bfxi} .
\end{equation}
The role of $\lambda$ is here played by $| \bfxi |$. The WKB type
solution of the initial value problem will contain two terms,
i.e., the ansatz becomes
\begin{equation}  \label{equ_IVPFIO_WKB}
   c^{-1} u (\bfx,t)
   = {a}(\bfx,t;\bfxi)\, \e^{\I{\alpha}(\bfx,t;\bfxi)} 
     + {b}(\bfx,t;\bfxi)\, \e^{\I{\beta}(\bfx,t;\bfxi)}.
\end{equation}
The reason is that there is a sign choice in the equation for $\alpha$, leading to the eikonal equations
\begin{equation}  \label{equ_IVPFIO_eikonal}
   \partial_t{\alpha}+c|\nabla{\alpha}| = 0  \quad\quad\mathrm{and}\quad\quad
   \partial_t{\beta}-c|\nabla{\beta}| = 0.
\end{equation}
Here, ${\alpha}$ covers the negative frequencies and ${\beta}$ the
positive ones. The transport equations can be concisely written in
terms of ${a}^2$ and ${b}^2$. They are
\begin{equation}  \label{equ_IVPFIO_transport}
   \partial_t({a}^2\partial_t{\alpha}) - \nabla\cdot({a}^2c^2\nabla{\alpha}) = 0  \quad\quad\mathrm{and}\quad\quad
   \partial_t({b}^2\partial_t{\beta}) - \nabla\cdot({b}^2c^2\nabla{\beta}) = 0.
\end{equation}

The WKB ansatz (\ref{equ_IVPFIO_WKB}) can be inserted into the initial
conditions (\ref{equ_IVPFIO_initcond}). This straightforwardly yields
initial conditions for $\alpha,\beta$:
\begin{equation}  \label{equ_IVPFIO_inipha}
   {\alpha}(\bfx,0;\bfxi) = {\beta}(\bfx,0;\bfxi) = \bfxi\cdot\bfx.
\end{equation}
The initial conditions for $a$, $b$ can be given in the form of a
matrix equation,
\begin{equation*}\label{equ_IVPFIO_iniamp}
   \begin{pmatrix} 1 & 1 \\ -\I c(\bfx)|\bfxi| & \I c(\bfx)|\bfxi|
        \end{pmatrix} 
   \begin{pmatrix} {a}(\bfx,0;\bfxi) \\ {b}(\bfx,0;\bfxi) \end{pmatrix} 
   = \begin{pmatrix} 0 \\ 1 \end{pmatrix}
\end{equation*}
The two terms in (\ref{equ_IVPFIO_WKB}) are not independent. The
initial value problem for ${\alpha}$ can be transformed into the
initial value problem for ${\beta}$ by replacing $\bfxi$ with $-\bfxi$
and setting ${\beta}(\bfx,t;\bfxi)=-{\alpha}(\bfx,t;-\bfxi)$. Further
analysis shows that ${b}(\bfx,t;\bfxi)\,
\e^{\I{\beta}(\bfx,t;\bfxi)}$ in (\ref{equ_IVPFIO_WKB}) is in
fact the complex conjugate of ${a}(\bfx,t;-\bfxi)\,
\e^{\I{\alpha}(\bfx,t;-\bfxi)}$.

\subsection{The phase function on characteristics}
\label{subsec_phase_function}

The method of characteristics \cite[section 3.2]{Evans98} will be used
to solve the eikonal and transport equations, as usual. We first solve
the initial value problem for ${\alpha}(\bfy,t;\bfxi)$,
cf.\ (\ref{equ_IVPFIO_eikonal}) and (\ref{equ_IVPFIO_inipha}). The
same procedure can be applied to ${\beta}$.

The characteristic equations are formulated in terms of $(\bfy,t)$,
$(\bfp,\omega)$ associated with $(\nabla \alpha, \partial_t \alpha)$,
and a variable $q$ associated with $\alpha$. The eikonal equation is
hence given by
\begin{equation}
   F(\bfy,t, \nabla \alpha, \partial_t \alpha, \alpha) = 0 ,\qquad
   F(\bfy,t,\bfp,\omega,q) = \omega + c(\bfy) | \bfp | .
\end{equation}
The characteristic equations are then
\begin{equation}  \label{equ_IVPFIO_MOC_charode}
\begin{split}
   \frac{d}{ds} \begin{pmatrix} \bfy \\ t \end{pmatrix}
   = \begin{pmatrix} \frac{c \bfp}{| \bfp |} \\ 1 \end{pmatrix} ,
\qquad\qquad
   \frac{d}{ds} \begin{pmatrix} \bfp \\ \omega \end{pmatrix}
   = \begin{pmatrix} -(\nabla c) | \bfp | \\ 0 \end{pmatrix} ,
\qquad\qquad
   \frac{dq}{ds} = 0 .
\end{split}
\end{equation}
The only non-trivial equations are those for $\bfy$ and $\bfp$.
By $(\bfy(\bfx,t;\bfxi),
\bfp(\bfx,t;\bfxi))$ we denote a solution with $(\bfy(0),\bfp(0)) = 
(\bfx,\bfxi)$.

When $\alpha$ is a solution to (\ref{equ_IVPFIO_eikonal}),
(\ref{equ_IVPFIO_inipha}) on some open set $U \subset \Real^{n+1}$,
and $(\bfy(\cdot), t(\cdot))$ is a solution to the first equation
of (\ref{equ_IVPFIO_MOC_charode}), where 
$(\bfp(\cdot),\omega(\cdot),q(\cdot)) = 
(\nabla_\bfy \alpha(\bfy(\cdot),t(\cdot)),
\partial_t \alpha(\bfy(\cdot),t(\cdot)),
\alpha(\bfy(\cdot),t(\cdot)))$, then
$(\bfp(\cdot),\omega(\cdot),q(\cdot))$ solve the other equations
of (\ref{equ_IVPFIO_MOC_charode}), and in particular
$\alpha(\bfy(\bfx,s;\bfxi),s; \bfxi) = \alpha(\bfy(\bfx,0; \bfxi),0;
\bfxi)$. Differentiating this identity, and using the identity 
$(\partial \alpha/ \partial \bfy) \cdot (\partial \bfy/\partial \bfxi) = 0$, 
which is a consequence of the linearization of  
(\ref{equ_IVPFIO_MOC_charode}), it follows that
\begin{equation} \label{equ_x_dalpha_dxi}
  \text{if $\bfy = \bfy(\bfx,t;\bfxi)$ then }
  \partial_\bfxi \alpha(\bfy,t;\bfxi) = \bfx .
\end{equation}

To verify the local existence of solutions of
(\ref{equ_IVPFIO_eikonal}), (\ref{equ_IVPFIO_inipha}), one must derive
the initial conditions for (\ref{equ_IVPFIO_MOC_charode}) from
(\ref{equ_IVPFIO_eikonal}) and (\ref{equ_IVPFIO_inipha}) for each
point $\bfy$, and verify that these initial conditions are
noncharacteristic, i.e.\ $\partial_\omega F \neq 0$. The latter is
trivially the case. It follows therefore from \cite{Evans98} that
solutions exists up to some finite time locally, when
$\partial_\bfy\bfx$ becomes singular.

To examine the $\bfxi$-dependence of the constructed solution
${\alpha}$, we note that the initial conditions for
(\ref{equ_IVPFIO_MOC_charode}) depend in a smooth fashion on
$\bfxi$. Consequently, so does $\alpha$. Furthermore, a short
calculation shows that the function ${\alpha}(\bfy,t;\bfxi)$ is
positive homogeneous with respect to $\bfxi$ of degree one.

\subsection{The amplitude function} \label{subsec_amplitude_function}

In this subsection, we solve for the amplitude in terms of a Jacobian
of the flow of the rays. The result in equations
(\ref{eq:ampl_cons_eq}) and (\ref{equ_IVPFIO_MOC_amplitude}) is a
manifestation of the energy conservation property. The first step is
to carefully write equation (\ref{equ_IVPFIO_transport}) into the form
\begin{equation} \label{eq:ampl_eq1}
  0 = \left( \partial_t - \frac{c^2 \nabla \alpha}{\partial_t \alpha} 
                \cdot \nabla 
- \left(\nabla \cdot \frac{c^2 \nabla \alpha}{\partial_t \alpha} \right)\right) a^2
  =
  (\partial_t + \bfv \cdot \nabla + (\nabla \cdot \bfv)) a^2 .
\end{equation}
where we define $\bfv = - \frac{c^2 \nabla \alpha}{\partial_t
  \alpha}$. We used that $(\partial_t + \bfv \cdot \nabla) \partial_t
\alpha = 0$, i.e.\ the frequency is constant on a ray.
The field $\bfv$ is associated with the rays, which satisfy
\begin{equation}
  \frac{d\bfy}{dt}(t; \bfx) = \bfv(\bfy(t;\bfx),t) .
\end{equation}
We have $\frac{d}{dt} \frac{\partial \bfy}{\partial \bfx} =
\frac{\partial \bfv}{\partial \bfy} \frac{\partial \bfy}{\partial
  \bfx}$.  The derivative $\frac{d}{dt} \left| \frac{\partial
  \bfy}{\partial \bfx} \right|$ is hence related to $\nabla \cdot
\bfv$ as
\begin{equation}
  \frac{d}{dt} \det \left( \frac{\partial \bfy}{\partial \bfx}\right) 
  = \det \left(\frac{\partial \bfy}{\partial \bfx} \right) 
    \operatorname{tr} \left( \left(\frac{\partial \bfy}{\partial \bfx}
        \right)^{-1} 
        \frac{\partial}{\partial t} \frac{\partial \bfy}{\partial \bfx} \right)
  = (\nabla \cdot \bfv)
    \det \left(\frac{\partial \bfy}{\partial \bfx}\right) .
\end{equation}
This implies that
\begin{equation} \label{eq:ampl_cons_eq}
   \det(\partial_\bfx\bfy) \,  a^2
   \quad\text{ is constant along the ray.}
\end{equation}
Indeed, (\ref{eq:ampl_cons_eq}) is easily established by computing the
derivative $\frac{d}{dt} \left[ \det(\partial_\bfx\bfy(t;\bfx))
  a(\bfy(t;\bfx),t)^2 \right]$ and using (\ref{eq:ampl_eq1}).  From
(\ref{eq:ampl_cons_eq}) it follows that $a(\bfy(t;\bfx),t;\cdot) =
\sqrt{\det(\partial_\bfx\bfy(t;\bfx)^{-1})} \,
a(\bfx,0;\cdot)$. Inserting the $\bfxi$-dependence back into the
notation, and using that the map $\bfx \mapsto \bfy(\bfx,t;\bfxi)$ is
invertible results in
\begin{equation}  \label{equ_IVPFIO_MOC_amplitude}
  {a}(\bfy,t;\bfxi) 
  = \frac{\I}{2c(\bfx(\bfy,t;\bfxi))|\bfxi|} 
           \sqrt{\det(\partial_\bfy\bfx(\bfy,t;\bfxi))} .
\end{equation}

\subsection{Solution operator as a FIO}
\label{subsec_FIO_S}

In this subsection we consider more general initial values than
(\ref{equ_IVPFIO_initcond}) by considering linear combinations of the
terms in (\ref{equ_IVPFIO_WKB}). This results in an approximate
solution operator in the form of a Fourier integral operator (FIO)
\cite{Duistermaat1996,Treves80v1,Treves80v2,GrigisSjostrand1994} and
we will review some of its properties. Our solutions so far involve
only the highest order WKB terms and are limited to some small but
finite time.

We consider the original wave equation (\ref{eq:wave_equation}) with $f=0$
and the initial conditions
\begin{equation}
  u(\bfx,0) = 0 ,\qquad\qquad
  \partial_t u(\bfx,0) = h_2(\bfx) .
\end{equation}
Following (\ref{equ_IVPFIO_WKB}), its WKB solution for time $t\in I$,
which we will denote for the moment by $S_{12}(t) h_2(\bfy)$ is given
by a sum of two terms $S_{12}(t) h_2(\bfy) = c(\bfy) ( S_{\rm a2}(t)
h_2(\bfy) + S_{\rm b2}(t) h_2(\bfy))$, with
\begin{equation}  \label{equ_IVPFIO_solopr}
   S_{\rm a2}(t) h_2(\bfy) = \frac{1}{(2\pi)^n}\iint \e^{\I{\alpha}(\bfy,t;\bfxi)-\I\bfxi\cdot\bfx} \,  
                              \frac{{a}(\bfy,t;\bfxi)}{c(\bfx)}\, h_2(\bfx) \, d\bfx \, d\bfxi .
\end{equation}
Here the subscript ``${\rm a}$'' refers to the negative frequencies,
i.e.\ phase and amplitude functions $\alpha$ and~$a$. Then $S_{\rm b2}$ is
defined similarly, using $\beta$ and $b$, and refers to positive frequencies.
We recall that the symmetry relations of
subsection~\ref{subsec_plane_wave} imply that $S_{\rm b2}(t) h_2 =
\overline{S_{\rm a2}(t) h_2}$. The construction is such that $t$ can
be negative.

To argue that $S_{\rm a2}$ is a FIO, we will take a closer look at its
phase function, i.e.,
\begin{equation}  \label{equ_IVPFIO_phai}
   \varphi(\bfy,t,\bfx,\bfxi) = {\alpha}(\bfy,t;\bfxi) -\bfxi\cdot\bfx,
\end{equation}
and observe that it is positive homogeneous with respect to $\bfxi$ of
degree one, as it should. The stationary point set is given by
\begin{equation}  \label{equ_IVPFIO_statset}
    \Gamma_t = \big\{ (\bfy,\bfx,\bfxi)\in Y\tines X\tines\Conic \,\big|\, \bfx=\partial_{\bfxi}{\alpha}(\bfy,t;\bfxi) \big\}. 
\end{equation}
For $\Gamma_t$ to be a closed smooth submanifold of $Y\tines
X\tines\Conic$, the matrix,
\begin{equation*}
   \begin{pmatrix} \partial_\bfy\partial_\bfxi\varphi \\ \partial_\bfx\partial_\bfxi\varphi \\ \partial_\bfxi\partial_\bfxi\varphi \end{pmatrix}
   =  \begin{pmatrix} \partial_\bfy\partial_\bfxi{\alpha} \\ -{\rm I_n} \\ \partial_\bfxi\partial_\bfxi{\alpha} \end{pmatrix} ,
\end{equation*}
needs to have maximal rank on $\Gamma_t$, which is obviously the case
\cite[chapter VI, (4.22)]{Treves80v2}. The stationary point set
$\Gamma_t$ is hence a $2n$-dimensional manifold with coordinates
$(\bfy,\bfxi)$.

The stationary point set can be understood in terms of the
bicharacterstics. Definition (\ref{equ_IVPFIO_statset}) allows us to
express $\bfx$ on $\Gamma_t$ as a function
$\bfx_\Gamma(\bfy,t,\bfxi)=\partial_{\bfxi}{\alpha}(\bfy,t;\bfxi)$. Equation
(\ref{equ_x_dalpha_dxi}) implies that $(\bfy,\bfx,\bfxi)\in\Gamma_t$
if and only if a bicharacteristic initiates at $(\bfx,\bfxi)$ and
passes through $(\bfy,\bfeta)$ at time $t$ where $\bfeta$ must be
given by $\bfeta=\partial_\bfy{\alpha}(\bfy,t;\bfxi)$. If
$(\bfy,\bfx,\bfxi)\in Y\tines X\tines \Conic$ and $t\in\Real$ are such
that $(\bfy,\bfx,\bfxi)\in\Gamma_t$ then one has
$\partial_t{\alpha}(\bfy,t;\bfxi) = -c(\bfx) |\bfxi|$, since the
frequency $\partial_t{\alpha}$ is constant on a ray.

The propagation of singularities of $S_{\rm a2}$ is described by its
canonical relation,
\begin{equation}  \label{equ_IVPFIO_canrel}
   \Pi_t = \big\{ ((\bfy,\bfeta),(\bfx,\bfxi)) \in \cotbun{Y}\times\cotbun{X} \,\big|\, \bfx=\bfx_\Gamma(\bfy,t,\bfxi),\, \bfeta=\partial_\bfy{\alpha}(\bfy,t;\bfxi) \big\}.
\end{equation}
Clearly, $\Pi_t$ is the image of $\Gamma_t$ under the map
$(\bfy,\bfx,\bfxi)\mapsto((\bfy,\bfeta),(\bfx,\bfxi))$. It follows
from the characteristic ODE that the map from $(\bfx,\bfxi)$ to
$(\bfy,\bfeta)$ is a bijection,
$\Phi_t:\cotbun{X}\rightarrow\cotbun{Y}$ say. The canonical relation
is hence the graph of an invertible function. Therefore, each pair
$(\bfy,\bfxi)$, $(\bfx,\bfxi)$ and $(\bfy,\bfeta)$ can act as
coordinates on $\Gamma_t$, and on $\Pi_t$. We observe that $\Phi_t$
depends smoothly on $t$.

The effect of the FIO $S_{\rm a2}$ working on a distribution $v$ can
be explained in terms of the wave front set. If $v\in\mathcal{E}'(X)$,
then the wave front set $\wfs(v)$ of $v$ is a closed conic subset that
describes the {\em locations} and {\em directions} of the
singularities of $v$. Operator $S_{\rm a2}$ affects a distribution $v$
by propagating its wave front set by composition with the canonical
relation\cite{Duistermaat1996,Hormander90,Treves80v1,Treves80v2}. From
the above description of $\Pi_t$ it follows that
\begin{equation}
   \wfs(S_{\rm a2}(t) v)\subset \Phi_t(\wfs(v)).
\end{equation}

The pair $(\bfx,-\partial_\bfx\varphi)$ are referred to as the
\textsl{ingoing variable} and \textsl{covariable}, and
$(\bfy,\partial_\bfy\varphi)$ as the \textsl{outgoing variable} and
\textsl{covariable}.
The idea behind the names is that $S_{\rm a2}$, by $\Phi_t$, carries over $(\bfx,\bfxi)$ of the ingoing wave front set into $(\bfy,\bfeta)$ of the outgoing wave front set \cite[p.\ 334]{Treves80v2}.


So far the highest order WKB approximation was used. The notion of
symbol classes for $a$, $b$ is needed to properly include lower order
terms. By replacing $a$ by an asymptotic sum $a(\bfx,t;\bfxi) =
\sum_{j=0}^\infty a_{m-j}(\bfx,t;\bfxi)$, with $a_k$ homogeneous of
order $k$ in $\bfxi$ for $|\bfxi| > 1$, the error in
(\ref{equ_IVPFIO_WKB}) can be made to decay as $|\bfxi|^{-N}$ for any
$N$. In other words, it becomes $C^\infty$ and the approximate
solution operator becomes a parametrix. Moreover, the \textsl{exact}
solution operator can be written in the form of $c (S_{\rm a2} +
S_{\rm b2})$ by the addition to $a$ and $b$ of certain symbols in
$S^{-\infty}$, which in particular decay faster than any power
$|\bfxi|^{-N}$ (unsurprisingly, the latter additions cannot be
computed with ray theory).

Solution operators for longer times have been constructed using more
general phase functions. For us those explicit expressions are of no
interest, but we note that the FIO property, with canonical relation
characterized by $\Phi_t$, remains valid, as can be seen by applying
the calculus of FIO's \cite[theorem 2.4.1]{Duistermaat1996} to the
product of several short time solution operators.

\subsection{Solution operators and decoupling}
\label{subsec_decoupling}

In subsection \ref{subsec_plane_wave} we assumed that the functions
${a}\e^{\I{\alpha}}$ and ${b}\e^{\I{\beta}}$ propagate independently
as solutions of the wave equation. In fact, this is the result of a
rather general procedure to decouple the wave equation
\cite{Taylor81}. Because the results of the decoupling will be used
explicitly in section~\ref{sec_up_down_PsDO} we give a short review of
it here; we will examine its relation to the solution operator $S_{\rm
  a2}$.

We write the wave field as the vector $(u_1(\bfx,t),u_2(\bfx,t))^T=(u,\partial_t u)^T$. The homogeneous wave equation can now be written as the following system, $1^{st}$-order with respect to time.
\begin{equation}    \label{equ_IVPFIO_system}
    \partial_t \begin{pmatrix} u_1 \\ u_2 \end{pmatrix} 
  = \begin{pmatrix} 0 & I \\ c(\bfx)^2\Delta & 0 \end{pmatrix}
    \begin{pmatrix} u_1 \\ u_2 \end{pmatrix} .
\end{equation}
The solution can be given as a matrix operator that maps the Cauchy data at $t=0$, say $(u_{0,1}(\bfx),u_{0,2}(\bfx))^T$, to the field vector at $t$.
\begin{equation}
   \mat{u_1}{u_2} = S(t) \mat{u_{0,1}}{u_{0,2}}   \quad\mathrm{with}\quad   S(t) = \mat{S_{11}(t) & S_{12}(t)}{S_{21}(t) & S_{22}(t)}.
\end{equation}
Naturally it satisfies the group property $S(t)S(s)=S(t+s)$. It is invertible by time reversal.

To decouple the system, we define several pseudodifferential operators. Let operator $B$ be a symmetric approximation of $\sqrt{-c(\bfx)\Delta c(\bfx)}$ with its approximate inverse $B^{-1}$ such that $B^2+c\Delta c$, $B^{-1}B-I$, and $BB^{-1}-I$ are regularizing operators, i.e.\ pseudodifferential operators of order $-\infty$. Although the square root does not necessarily have to be symmetric, being symmetric has the advantage that it yields a unitary solution operator, as we will see. Neglecting regularity conditions, we use symmetry and self-adjointness interchangeably. The principal symbols of $B$ and $B^{-1}$ are $c(\bfx)|\bfxi|$ and $\frac{1}{c(\bfx)|\bfxi|}$ respectively. The existence of such operators is a well known result in pseudodifferential operator theory, see e.g.\ \cite{OptRoot2010}. We now have the ingredients to define two matrix pseudodifferential operators $\Lambda$ and $V$ by
\begin{equation}  \label{equ_IVPFIO_VLambda}
    V = c(\bfx) \mat{1 & 1}{-\I B & \I B}  \quad\mathrm{and}\quad  \Lambda = \mat{1 & \I B^{-1}}{1 & -\I B^{-1}} \tfrac{1}{2c(\bfx)},
\end{equation}
which are each others inverses modulo regularizing operators. We finally define the following two fields $(u_{\rm a}(\bfx,t),u_{\rm b}(\bfx,t))^T = \Lambda (u_1,u_2)^T$. Note that the Cauchy data can be represented by a time evaluation of $(u_{\rm a},u_{\rm b})^T$. We will use the phrase `Cauchy data' in this way also. Omitting the regularizing error operators, the system (\ref{equ_IVPFIO_system}) transforms into a decoupled system for $(u_{\rm a},u_{\rm b})^T$ of which the first equation, together with its initial value, is
\begin{equation} \label{equ_IVPFIO_decoupled}
    \partial_t u_{\rm a} = -\I B u_{\rm a}  \quad\mathrm{and}\quad  u_{\rm a}(\bfx,0) = u_{\rm 0,a}(\bfx).
\end{equation}
By removing the minus sign it becomes the equation for $u_{\rm b}$. Let $S_{\rm a}$ and $S_{\rm b}$ be solution operators of the IVPs, i.e.\ $u_{\rm a}(\bfx,t)=S_{\rm a}(t)u_{\rm 0,a}(\bfx)$ and similar for~$S_{\rm b}$. Therefore, modulo regularizing operators
\begin{equation} \label{equ_IVFIO_relation_12ab}
   S(t) = V \mat{S_{\rm a}(t) & 0}{0 & S_{\rm b}(t)} \Lambda,
\end{equation}
which means that the original IVP (\ref{equ_IVPFIO_system}) and the decoupled system (\ref{equ_IVPFIO_decoupled}) have identical solutions disregarding a smooth error. Because $B$ is self-adjoint operators $S_{\rm a}$ and $S_{\rm b}$ are unitary, which follows from Stone's Theorem \cite{Conway1990}. It can be shown that $S_{\rm a}(t)$ and $S_{\rm b}(t)$ with $t\in\Real$ are FIOs \cite{Taylor1981}.

We turn to the relation of this matrix formalism and
$S_{12}=c(S_{\rma 2}+S_{\rmb 2})$, from which we derive a local
expression of $\mathrm{p.s.}(S_{\rm a})$, the principal symbol of
$S_{\rm a}$. The amplitude of $S_{\rm a2}$ is a homogeneous symbol,
which implies that it coincides with its principal symbol, and from
its definition~(\ref{equ_IVPFIO_solopr}) can thereafter be concluded
that $S_{\rm a2}=\mathrm{p.s.}(S_{\rm a}\Lambda_{12})$. The principal
symbol of a composition is the product of the principal symbols of its
factors \cite{Duistermaat1996,Treves80v2}, and hence
\mbox{$\frac{{a}(\bfy,t;\bfxi)}{c(\bfx)} = \mathrm{p.s.} (S_{\rm a})
  \frac{\I}{2c(\bfx)^2|\bfxi|}$}. Using the solution of the transport
equation (\ref{equ_IVPFIO_MOC_amplitude}), one concludes that
\begin{equation}    \label{equ_IVPFIO_psSa}
    \mathrm{p.s.} (S_{\rm a})(\bfy,\bfx,\bfxi) = \sqrt{\det(\partial_\bfy\bfx(\bfy,t;\bfxi))} .
\end{equation}
The principal symbol of $S_{\rm b}$ follows from $S_{\rm b}=\overline{S_{\rm a}}$.

\subsection{The source field}   \label{subsec_scattering_model}

In this subsection we discuss the source problem. The unperturbed
velocity is a smooth function $c(\bfx)$. The source wave is the
fundamental solution of a delta function located at $(\bfx_{\rm s},0)$
in space-time.
\begin{equation}  \label{equ_SCA_sourceproblem}
\begin{split}
   [c(\bfx)^{-2} \partial_t^2-\Delta] g(\bfx,t) &= \delta(\bfx-\bfx_{\rm s})\delta(t) \\
                                             g(\bfx,0) &= 0, \quad
                                             \partial_t g(\bfx,0) = 0.
\end{split}
\end{equation}

An important assumption is that 
\begin{equation} \label{eq:assume_SME}
  \text{the source wave does not exhibit multipathing (SME).}
\end{equation}
The fundamental solution can therefore be approximated by an asymptotic expansion with a single phase function. This can in principle be found by an application of section \ref{subsec_FIO_S} and using a change of phase function \cite[section 2.3]{Duistermaat1996}. One can show that, if $|\bfx-\bfx_{\rm s}|>\varepsilon$ for an $\varepsilon>0$ and $t$ bounded, the fundamental solution can be written as the Fourier integral \cite{Beylkin85}
\begin{equation}  \label{equ_SCA_full_asymptotic_source}
   g(\bfx,\bfx_{\rm s},t)
    = \frac{1}{2\pi}\int A(\bfx,\bfx_{\rm s},\omega) \, \e^{\I\omega(t-T(\bfx,\bfx_{\rm s}))} \, d\omega \,,
\end{equation}
with $A(\bfx,\bfx_{\rm s},\omega)\in S^\frac{n-3}{2}$ and $A(\bfx,\bfx_{\rm s},\omega) = \sum_{k=0}^\infty A_k(\bfx,\bfx_{\rm s},\omega)$. Each term is homogeneous, i.e.\ one has \mbox{$A_k(\bfx,\bfx_{\rm s},\lambda\omega) = \lambda^{\frac{n-3}{2}-k} A_k(\bfx,\bfx_{\rm s},\omega)$} for $\lambda>1$ and $|\omega|>1$. This holds for $n=1,2,3$. The sum means that for each $N\in\Nats$ there exists a $C_N>0$ such that
\begin{equation}  \label{equ_SCA_asymptotic_sum}
   \big|A(\bfx,\bfx_{\rm s},\omega)-{\sum}_{k=0}^{N-1} A_k(\bfx,\bfx_{\rm s},\omega)\big| \le C_N \, (1+|\omega|)^{\frac{n-3}{2}-N}.
\end{equation}
The source is real, implying that $\overline{A_k(\bfx,\bfx_{\rm s},\omega)}=A_k(\bfx,\bfx_{\rm s},-\omega)$ for all $k$. In (\ref{equ_SCA_full_asymptotic_source}) one can also view the separate contributions of positive and negative frequencies.

In part of the further analysis we will use the highest order term of the source field. There exist an amplitude $A_{\rm s}(\bfx)$ and a cutoff $\sigma(\omega)$, both real and such that \hbox{$A_0(\bfx,\bfx_{\rm s},\omega) = A_{\rm s}(\bfx)\,\sigma(\omega)\,(\I\omega)^{\frac{n-3}{2}}$} on the support of $\sigma$. Function $\sigma$ is smooth and has value 1 except for a neighborhood of the origin where it is 0. We also abbreviate \hbox{$T_{\rm s}(\bfx)=T(\bfx,\bfx_{\rm s})$}. The principal term of the expansion can now be written as
\begin{equation}  \label{equ_SCA_hot_source}
   g(\bfx,t) = A_{\rm s}(\bfx)\,\partial_t^{\frac{n-3}{2}} \delta(t-T_{\rm s}(\bfx)).
\end{equation}
Functions $A_{\rm s}(\bfx)$ and $T_{\rm s}(\bfx)$ will be referred to as the source wave amplitude and traveltime respectively. Operator $\partial_t^{\frac{n-3}{2}}$ denotes the pseudodifferential operator with symbol $\omega\mapsto\sigma(\omega)(\I\omega)^{\frac{n-3}{2}}$. The approximation $g(\bfx,t)$ matches the exact solution in case $\nabla c=0$ in the limit of $\omega\rightarrow\infty$. In that case one would have $T_{\rm s}(\bfx)=\tfrac{|\bfx-\bfx_\rms|}{c}$ and $A_{\rm s}(\bfx)=\tfrac{c}{2}, \sqrt{\tfrac{c}{8\pi|\bfx-\bfx_\rms|}}, \tfrac{1}{4\pi|\bfx-\bfx_\rms|}$ for respectively $n=1,2,3$~\cite{Beylkin85}. We define the \textsl{source wave direction} vector
\begin{equation}  \label{equ_SCA_swd_vector}
   \bfns(\bfx) = c(\bfx)\partial_\bfx T_{\rm s}(\bfx).
\end{equation}
This vector will, for example, be used to provide insight in the microlocal interpretation of the scattering event.

Source waves that arrive at the acquisition set are in the context of the inversion called direct waves. The negative frequency part of the wave front set of the source field is given by
\begin{equation}  \label{equ_directsourcerays}
   \Xi_\rms = \big\{ (\bfx,t,\bfxi,\omega)\in\cotbun{(X\tines\Real)}\ \big|\
                                       (\bfx,\bfxi)=\Phi_t(\bfx_\rms,\bfxi_\rms),\ \bfxi_\rms\in\Conic,\ \omega=-c(\bfx)|\bfxi| \big\}.
\end{equation}
It contains all bicharacteristics that go through $(\bfx_\rms,0)$ in spacetime. In the region where the Fourier integral (\ref{equ_SCA_full_asymptotic_source}) is valid, direct rays are also described by the equations $t=T_\rms(\bfx)$ and $\bfxi=|\bfxi|\,\bfns(\bfx)$. The restriction to time $t_\rmc$ is denoted by
\begin{equation}  \label{equ_directsourcerays_tc}
   \Xi_{\rms,t_\rmc} = \big\{ (\bfx,\bfxi)\in\cotbun{X}\ \big|\ (\bfx,t_\rmc,\bfxi,\omega) \in \Xi_\rms \big\}.
\end{equation}
This will be used to describe the direct waves in the Cauchy data of the continued scattered field.

\section{Forward scattering problem} \label{sec_forward_scattering}
We consider the scattering problem and formulate the \textsl{continued scattered wave field} as the result of the \textsl{scattering operator} acting on the \textsl{reflectivity}, i.e.\ the medium perturbation. We start with a description of the scattering model, essentially a linearization of the source problem. In section~\ref{subsec_continued_scat_wave} we derive an explicit expression for the mentioned operator. It will be used in section \ref{subsec_SCA_FIO} to define the global scattering operator, of which we show in theorem \ref{thm_Fa} that it is a FIO under the conditions of the DSE and the SME.

\subsection{Continued scattered wave field}  \label{subsec_continued_scat_wave}
Here, we introduce the \textsl{scattered wave field} and the \textsl{continued scattered wave field}. Loosely stated, the latter is the reverse time continuation of the former. We introduce the \textsl{scattering operator} that maps the medium perturbation to the continued scattered wave field. Theorem \ref{thm_localF} shows that a local representation of the operator can be written as an oscillatory integral.

The medium perturbation is modeled by the \textsl{reflectivity function} $r(\bfx)$. The non-smooth character of the perturbation gives rise to a scattered or reflected wave. We assume that 
\begin{equation} \label{eq:assume_supp_r}
  \supp(r) \subset D \text{ for a compact $D\subset\Real^{n-1}\tines[\epsilon,\infty)$ and some $\epsilon>0$.}
\end{equation}
The last component of $\bfx$ describes the depth. Because the source is at the surface, i.e.\ $x_{{\rm s},n}=0$, the reflectivity is zero in a neighborhood of the source. Following the Born approximation, the scattering problem is obtained by linearization of the source problem~(\ref{equ_SCA_sourceproblem}) with $(1+r(\bfx))c(\bfx)$ as the velocity. To find the linearization it is advantageous to first multiply (\ref{equ_SCA_sourceproblem}) with $c(\bfx)^2$. The result is
\begin{equation}  \label{equ_SCA_scatteringproblem}
\begin{split}
   [\partial_t^2-c(\bfx)^2\Delta] u(\bfx,t) &= r(\bfx) 2 A_{\rm s}(\bfx)\partial_t^{\frac{n+1}{2}}\delta(t-T_{\rm s}(\bfx)) \\
                                  u(\bfx,0) &= 0, \quad
                                  \partial_t u(\bfx,0) = 0.
\end{split}
\end{equation}
The \textsl{scattered wave field} $u(\bfx,t)$ is defined as the solution of the scattering problem (\ref{equ_SCA_scatteringproblem}).
We have used that the source wave field does not exhibit multipathing (SME) and can therefore be formulated as the asymptotic expansion~(\ref{equ_SCA_full_asymptotic_source}). In the forward modeling we will use the principal term to approximate the source, i.e.\ (\ref{equ_SCA_hot_source}). The subprincipal source terms do not contribute to the principal symbol of the scattering operator \cite{Rakesh1988}.

The \textsl{continued scattered wave field} $u_{\rm h}$ is defined as the solution of a final value problem of the homogeneous wave equation such that the Cauchy data at $t=T_1$ are identical with the Cauchy data of the scattered field $u$:
\begin{equation}  \label{equ_SCA_continuedscattered}
\begin{split}
   [\partial_t^2-c(\bfx)^2\Delta] u_{\rm h}(\bfx,t) &= 0 , \\
                                  u_{\rm h}(\bfx,T_1) &= u(\bfx,T_1), \quad
                                  \partial_t u_{\rm h}(\bfx,T_1) = \partial_t u(\bfx,T_1).
\end{split}
\end{equation}
The contributions to the scattered field entirely come to pass within the interval $[T_0,T_1]$, i.e.\ $T_0$ and $T_1$ are chosen such that $T_{\rm s}(\supp(r))\subset[T_0,T_1]$. For $t\geq T_1$ one has $u_{\rm h}(\bfx,t)=u(\bfx,t)$ but as $u_{\rm h}$ does and $u$ does not solve the homogeneous wave equation, they differ for $t<T_1$. We also use the decoupled wave fields $(u_{\rm h,a},u_{\rm h,b})^T = \Lambda (u_{\rm h},\partial_t u_{\rm h})^T$, with $\Lambda$ defined in (\ref{equ_IVPFIO_VLambda}).

The continued scattered wave field models the receiver wave field in an idealized experiment. Idealized here means that all scattered rays are present, even rays that do not intersect the acquisition set. It hence represents the scattered field by being its continuation in reverse time. The \textsl{reverse time continued wave field}, to be defined in section \ref{sec_up_down_PsDO}, models the receiver wave field.


The \textsl{scattering operator} $F$ by definition maps $r$ to $(u_\rmh, \partial_t u_\rmh)^T$, and  we let $F_{\rm a}$ and $F_{\rm b}$ map the reflectivity $r$ to the decoupled components of the continued scattered wave field $u_{\rm h,a}$ and $u_{\rm h,b}$. To show that $F_{\rm a}$ is a FIO we derive an explicit formulation valid for a small time interval around a localized scattering event. Let $\{\rho_i\}_{i\in\mathcal{I}}$ be a finite smooth partition on $D$ such that $\sum_{i\in\mathcal{I}}\rho_i=1$ on $D$. Using $\rho_i$ as multiplication operator then
\begin{equation}  \label{equ_SCA_global}
   F_{\rm a}(t)={\sum}_{i\in\mathcal{I}} S_{\rm a}(t-t_{1i}) F_{\rm a}(t_{1i}) \rho_i,
\end{equation}
and $F_{\rm b}$ likewise. $S_{\rm a}$ is the solution operator (\ref{equ_IVFIO_relation_12ab}). The $i^{\rm th}$ local scattering event is delimited by $[t_{0i},t_{1i}]$, so $T_{\rm s}(\supp(\rho_i))\subset[t_{0i},t_{1i}]$. The partition is chosen fine enough such that $[t_{0i},t_{1i}]$ falls within an interval of definition of (\ref{equ_IVPFIO_solopr}), i.e.\ the local expression of solution operator $S_{\rma 2}$.

We write $\rho$ for an arbitrary member of $\{\rho_i\}_{i\in\mathcal{I}}$ and $[t_0,t_1]$ for its delimiting interval, and derive a local expression of the scattering operator evaluated at~$t_1$. We will prove the following
\begin{thm} \label{thm_localF}
   The local scattering operator $F_{\rm a}(t_1)\rho$ can be written as an oscillatory integral. It maps the reflectivity $r$ to the continued scattered wave field, that is, $u_{\rm h,a}(\bfy,t_1)=F_{\rm a}(t_1)\rho r(\bfy)$ and
   \begin{equation} \label{equ_SCA_Fa}
      u_{\rm h,a}(\bfy,t_1) = \frac{1}{(2\pi)^n}\iint \e^{\I\phiT(\bfy,t_1,\bfx,\bfxi)} \mathrm{A_F}(\bfy,t_1,\bfx,\bfxi) d\bfxi\, \rho r(\bfx) d\bfx,
   \end{equation}
   in which the phase and amplitude function are respectively defined as
   \begin{equation}    \label{equ_SCA_phaseamplitude}
   \begin{split}
      \phiT(\bfy,t_1,\bfx,\bfxi) &= {\alpha}(\bfy,t_1-T_{\rm s}(\bfx);\bfxi) - \bfxi\cdot\bfx, \\
      \mathrm{A_F}(\bfy,t_1,\bfx,\bfxi) &= 
(\I\partial_t{\alpha}(\bfy,t_1-T_{\rm s}(\bfx);\bfxi))^{\frac{n+1}{2}}\frac{{a}(\bfy,t_1-T_{\rm s}(\bfx);\bfxi)}{c(\bfx)} 2 A_{\rm s}(\bfx).
   \end{split}
   \end{equation}
\end{thm}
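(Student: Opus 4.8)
The plan is to build $u_{\rm h,a}(\bfy,t_1)$ by composing the known short-time solution operator $S_{\rm a2}$ of Section~\ref{subsec_FIO_S} with the singular source term appearing on the right-hand side of the scattering problem~(\ref{equ_SCA_scatteringproblem}), and to read off the phase and amplitude from that composition. First I would invoke Duhamel's principle: since $u$ solves~(\ref{equ_SCA_scatteringproblem}) with zero Cauchy data at $t=0$ and a forcing term supported (in time) near $t=T_{\rm s}(\bfx)$, one has, for $t$ past the scattering event,
\begin{equation*}
   (u,\partial_t u)^T(\bfy,t) = \int S(t-\tau)\,\big(0,\, r(\bfx)2A_{\rm s}(\bfx)\partial_\tau^{\frac{n+1}{2}}\delta(\tau-T_{\rm s}(\bfx))\big)^T\, d\tau,
\end{equation*}
and, because the continued scattered field agrees with $u$ in Cauchy data at $t=T_1$ and then solves the homogeneous equation, $(u_{\rm h},\partial_t u_{\rm h})^T(\bfy,t)$ is given by the same formula for all $t$. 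Applying $\Lambda$ and using the decoupling identity~(\ref{equ_IVFIO_relation_12ab}), the $\rm a$-component becomes $u_{\rm h,a}(\bfy,t_1)=\int S_{\rm a}(t_1-\tau)\,[\Lambda(0,r\,2A_{\rm s}\partial_\tau^{\frac{n+1}{2}}\delta(\tau-T_{\rm s}))^T]_{\rm a}\,d\tau$. Since $\Lambda_{12}$ has principal symbol $\tfrac{\I}{2c(\bfx)^2|\bfxi|}$, the vector entering $S_{\rm a}$ is, to principal order, $r(\bfx)2A_{\rm s}(\bfx)\,\partial_\tau^{\frac{n+1}{2}}\delta(\tau-T_{\rm s}(\bfx))$ acted on by $\Lambda_{12}$; combined with the identification $S_{\rm a2}=\mathrm{p.s.}(S_{\rm a}\Lambda_{12})$ established in subsection~\ref{subsec_decoupling}, this reduces the whole computation to applying $S_{\rm a2}(t_1-\tau)$ to $r\,2A_{\rm s}\,\partial_\tau^{\frac{n+1}{2}}\delta(\tau-T_{\rm s})$ and integrating in $\tau$.

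Next I would substitute the oscillatory-integral form~(\ref{equ_IVPFIO_solopr}) of $S_{\rm a2}$, write $\partial_\tau^{\frac{n+1}{2}}\delta(\tau-T_{\rm s}(\bfx))$ as $\frac{1}{2\pi}\int(\I\omega)^{\frac{n+1}{2}}\e^{\I\omega(\tau-T_{\rm s}(\bfx))}\,d\omega$ (modulo the cutoff $\sigma$, which lands in the symbol), and carry out the $\tau$-integration. Here the key structural fact is that $\alpha(\bfy,s;\bfxi)$ is homogeneous of degree one in $\bfxi$ and that $\partial_s\alpha=-c|\bfxi|$ is the frequency carried along the ray: the $\tau$-integral produces a delta forcing at $\tau=T_{\rm s}(\bfx)$, so $s=t_1-\tau$ is replaced by $t_1-T_{\rm s}(\bfx)$, and the frequency variable $\omega$ gets tied to $\partial_t\alpha(\bfy,t_1-T_{\rm s}(\bfx);\bfxi)$. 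Pulling the factor $(\I\omega)^{\frac{n+1}{2}}$ through this identification converts it into $(\I\partial_t\alpha(\bfy,t_1-T_{\rm s}(\bfx);\bfxi))^{\frac{n+1}{2}}$, which is exactly the first factor of $\mathrm{A_F}$ in~(\ref{equ_SCA_phaseamplitude}); the remaining factors $\tfrac{a(\bfy,t_1-T_{\rm s}(\bfx);\bfxi)}{c(\bfx)}$ and $2A_{\rm s}(\bfx)$ come directly from the amplitude of $S_{\rm a2}$ and from the source term, respectively. The phase $\alpha(\bfy,t_1-T_{\rm s}(\bfx);\bfxi)-\bfxi\cdot\bfx$ is precisely $\phiT$. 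The support assumption~(\ref{eq:assume_supp_r}) together with the choice of $[t_0,t_1]$ inside a domain of validity of~(\ref{equ_IVPFIO_solopr}) guarantees that $t_1-T_{\rm s}(\bfx)$ stays in the interval of definition of $S_{\rm a2}$ for $\bfx\in\supp\rho$, so the local expression is legitimate.

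The main obstacle I anticipate is handling the singular forcing $\partial_t^{\frac{n+1}{2}}\delta(t-T_{\rm s}(\bfx))$ carefully: one must justify interchanging the $\tau$-, $\bfx$-, $\bfxi$- and $\omega$-integrations, make sense of the half-integer-order operator $\partial_t^{\frac{n+1}{2}}$ (via its symbol $\sigma(\omega)(\I\omega)^{\frac{n+1}{2}}$, as defined after~(\ref{equ_SCA_hot_source})), and track the passage from the genuine source field to its principal term~(\ref{equ_SCA_hot_source}) — for which one appeals to the cited fact~\cite{Rakesh1988} that the subprincipal source terms do not affect the principal symbol, so the discarded pieces only contribute a lower-order amplitude, absorbed into $\mathrm{A_F}$ as an asymptotic sum. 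A secondary technical point is that the identity $S_{\rm a2}=\mathrm{p.s.}(S_{\rm a}\Lambda_{12})$ is modulo smoothing and lower-order terms, so the claimed equality~(\ref{equ_SCA_Fa}) should be read with $\mathrm{A_F}$ understood as the leading term of a classical symbol expansion; once this bookkeeping is in place, the computation is a direct substitution and the result follows.
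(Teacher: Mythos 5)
Your proposal is correct and follows essentially the same route as the paper's proof: Duhamel's principle to convert the forced problem into a $\tau$-family of homogeneous IVPs (whose $\tau$-integral over $[T_0,T_1]$ also gives the continued field $u_{\rm h}$), decoupling to reduce to $S_{\rm a2}$ acting on the source term, substitution of the local oscillatory-integral form (\ref{equ_IVPFIO_solopr}), conversion of $\partial_t^{\frac{n+1}{2}}$ into the factor $(\I\partial_t\alpha)^{\frac{n+1}{2}}$ to leading order, and explicit integration against $\delta(\tau-T_{\rm s}(\bfx))$. The only cosmetic difference is that you carry out the $\tau$- and $\omega$-integrations via the Fourier representation of the fractional derivative, whereas the paper recognizes the two convolutions and commutes $\partial_t^{\frac{n+1}{2}}$ through the FIO kernel; the technical caveats you flag (principal-order reading of $\mathrm{A_F}$, validity interval of $S_{\rm a2}$) match the paper's treatment.
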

Here (\ref{equ_SCA_Fa}) is only the contribution of $\rho r$. There is a similar statement for $u_{\rm h,b}$, which satisfies $u_{\rm h,b}(\bfy,t)=\overline{u_{\rm h,a}(\bfy,t)}$. 
 
\begin{proof}
To solve the scattering problem~(\ref{equ_SCA_scatteringproblem}) it will be transformed into a $\tau$-parameterized family of IVP's. Duhamel's principle states that the solution, i.e.\ the scattered wave field, is given by
\begin{equation}  \label{equ_SCA_Duhamelsolvarc}
   u(\bfx,t):= \int_0^t \tilde u(\bfx,t;\tau)\,d\tau,
\end{equation}
in which for each $\tau$ function $\tilde u(\bfx,t;\tau)$ is the solution the homogeneous wave equation with prescribed Cauchy data on $t=\tau$ \cite[\S 2.4.2]{Evans98}:
\begin{equation} \label{equ_SCA_homivpvarc}
\begin{split}
   [\partial_t^2-c(\bfx)^2\Delta] \tilde u(\bfx,t;\tau) &= 0  \quad\mathrm{with}\quad  t\in\Real \\
             \tilde u(\bfx,\tau;\tau) &= 0 \\
   \partial_t\tilde u(\bfx,\tau;\tau) &=  r(\bfx) 2 A_{\rm s}(\bfx)\partial_t^{\frac{n+1}{2}}\delta(\tau-T_{\rm s}(\bfx)).
\end{split}
\end{equation}

The continued scattered wave field is the solution of the final value problem (\ref{equ_SCA_continuedscattered}). Using the observation that $r(\bfx) 2 A_{\rm s}(\bfx)\partial_t^{\frac{n+1}{2}}\delta(\tau-T_{\rm s}(\bfx))=0$ if $\tau\notin[T_0,T_1]$, it can be found by
\begin{equation}  \label{equ_SCA_h_wavefield}
   u_{\rm h}(\bfx,t) := \int_{T_0}^{T_1} \tilde u(\bfx,t;\tau)\, d\tau  \quad\mathrm{with}\quad  t\in\Real.
\end{equation}
Time integration is now over the fixed interval $[T_0,T_1]$, by which $u_{\rm h}$ solves the homogeneous wave equation. For $t\geq T_1$ the wave fields $u$ and $u_{\rm h}$ coincide. Therefore, this solves (\ref{equ_SCA_continuedscattered}).

To derive the local expression we solve the $\tau$-parameterized homogeneous IVP (\ref{equ_SCA_homivpvarc}) with $r$ replaced by $\rho r$ and evaluate the solution at~$t_1$. Let $(\tilde u_\rma,\tilde u_\rmb)^T=\Lambda(\tilde u,\partial_t\tilde u)^T$, then $\tilde u=c(\tilde u_\rma+\tilde u_\rmb)$. We apply solution operator $S_{\rma 2}$ with initial state at time $\tau$. This gives
\begin{equation}  \label{equ_SCA_h_wf}
   \tilde u_{\rm a}(\bfy,t_1;\tau) = S_{\rm a2}[\rho r(\bfx) 2 A_{\rm s}(\bfx)\partial_t^{\frac{n+1}{2}}\delta(\tau-T_{\rm s}(\bfx))](\bfy,t_1-\tau).
\end{equation}
Note that $S_{\rma 2}$ involves a \textsl{relative} time, i.e.\ the difference $t_1-\tau$, which is allowed because the medium velocity does not change in time. Then, time is as much as \textsl{absolute} when it agrees with the source time reference.

Consider $u_{\rm h,a}(\bfy,t_1)$, i.e.\ integral (\ref{equ_SCA_h_wavefield}) with $\tilde{u}$ replaced by $\tilde u_{\rm a}(\bfy,t_1;\tau)$ in (\ref{equ_SCA_h_wf}). We will eliminate $\tau$ by integration and write the field as an oscillatory integral. With the expression (\ref{equ_IVPFIO_solopr}) of $S_{\rm a2}$ and the application of $T_{\rm s}(\supp(\rho r))\subset[t_0,t_1]$ one derives the following integral
\begin{equation*}
   u_{\rm h,a}(\bfy,t_1) = \frac{1}{(2\pi)^n}\iiint
                        \e^{\I{\alpha}(\bfy,t_1-\tau;\bfxi)-\I\bfxi\cdot\bfx}\, \frac{{a}(\bfy,t_1-\tau;\bfxi)}{c(\bfx)} \rho r(\bfx)
                        2 A_{\rm s}(\bfx)\,\partial_t^{\frac{n+1}{2}}\delta(\tau-T_{\rm s}(\bfx)) \, d\bfx d\bfxi d\tau.
\end{equation*}
We recognize two convolutions, the integral over $\tau$ and operator $\partial_t^{\frac{n+1}{2}}$, the operator $\partial_t^{\frac{n+1}{2}}$ can be commuted to act on $\e^{\I{\alpha}-\I\bfxi\cdot\bfx}\tfrac{{a}}{c}$. Restricting to the highest order term, one writes $\partial_t^{\frac{n+1}{2}}[\e^{\I{\alpha}-\I\bfxi\cdot\bfx}\tfrac{{a}}{c}]=\nolinebreak[1](\I\partial_t{\alpha})^{\!\frac{n+1}{2}}\e^{\I{\alpha}-\I\bfxi\cdot\bfx}\tfrac{{a}}{c}\,$, which is an application of a general result of FIO theory \cite{Duistermaat1996,Treves80v2}. Cutoff $\sigma$ is omitted to shorten the expression. This yields
\begin{equation*}
   u_{\rm h,a}(\bfy,t_1) = \frac{1}{(2\pi)^n}\iiint
                        \left[\e^{\I{\alpha}-\I\bfxi\cdot\bfx}(\I\partial_t{\alpha})^{\!\frac{n+1}{2}}\frac{{a}}{c(\bfx)}\right]_{(\bfy,t_1-\tau;\bfxi)} \rho r(\bfx)
                        2 A_{\rm s}(\bfx)\,\delta(\tau-T_{\rm s}(\bfx)) \, d\bfx d\bfxi d\tau.
\end{equation*}
Notation $[\dots]_{\mathrm{arg}}$ means that ${\alpha}$, $\partial_t{\alpha}$ and ${a}$ within the square brackets are evaluated in given $\mathrm{argument}$. Explicit integration finally gives the oscillatory integral in (\ref{equ_SCA_Fa}), (\ref{equ_SCA_phaseamplitude}).
\end{proof}

\subsection{Scattering operator as FIO}   \label{subsec_SCA_FIO}
Here we establish that $F_{\rm a}(t_1)\rho$ is a FIO if the direct waves are excluded (DSE). We define the global scattering operator $\pi F$ and show that it is a FIO with an injective canonical relation, i.e.\ theorem \ref{thm_Fa}. 

Before we proceed with the theoretical aspects of the operator, we
will explain what it does. The stationary points of $F_{\rm
  a}(t_1)\rho$ are given by $\partial_{\bfxi}\phiT=0$,
i.e.\ $\partial_\bfxi{\alpha}(\bfy,t_1-T_{\rm
  s}(\bfx);\bfxi)-\bfx=0$. A stationary point $(\bfy,\bfx,\bfxi)$ has
the following interpretation. The source wave front excites the
reflectivity at $(\bfx,T_{\rm s}(\bfx))$ in space-time, causing a
scattering event. The event emits a scattered ray from $(\bfx,T_{\rm
  s}(\bfx))$ with initial covariable $\bfxi$, which arrives at
$(\bfy,t_1)$ with covariable
$\bfeta=\partial_\bfy\phiT(\bfy,t_1,\bfx,\bfxi)$. Operator $F_{\rm
  a}(t_1)\rho$ so describes the scattering event and the propagation
of the scattered wave over a small distance. The distance will be
extended by application of the solution operator, see
(\ref{equ_SCA_global}). Using the terminology introduced at the end of
subsection~\ref{subsec_FIO_S}, the ingoing variable and covariable are
$(\bfx,\bfzeta)$ with
$\bfzeta=-\partial_\bfx\phiT(\bfy,t_1,\bfx,\bfxi)$. The outgoing
variable and covariable are $(\bfy,\bfeta)$. This means that $F_{\rm
  a}(t_1)\rho$ carries over $(\bfx,\bfzeta)\in\wfs(r)$ into
$(\bfy,\bfeta)\in\wfs(u_{\rm h,a}(.,t_1))$.

We have
\begin{equation*}
   \bfzeta = -\partial_\bfx\phiT = \partial_t{\alpha}(\bfy,t_1-T_{\rm s}(\bfx);\bfxi) \,\partial_\bfx T_{\rm s}(\bfx) + \bfxi.
\end{equation*}
Using the source wave direction vector $\bfns(\bfx)=c(\bfx)\partial_\bfx T_{\rm s}(\bfx)$ and the identity $\partial_t{\alpha}=-c(\bfx)|\bfxi|$ for the frequency, this yields the relation between $\bfzeta$ and $\bfxi$,
\begin{equation} \label{equ_SCA_coivar}
   \bfzeta = \bfxi -|\bfxi|\,\bfns(\bfx) ,
\end{equation}
reflecting Snell's law. Figure \ref{fig_scattering} shows the microlocal picture of the scattering event and the scattered ray. Equation (\ref{equ_SCA_coivar}) also implies that $\bfzeta\cdot\bfns(\bfx)<0$ everywhere. This is a result of the geometry of the reflection event with one source. Note that (\ref{equ_SCA_coivar}) only holds for negative frequencies. For positive frequencies, i.e.\ considering $F_{\rm b}$, one gets $\bfzeta'=\bfxi+|\bfxi|\,\bfns(\bfx)$ instead. In that case $\bfzeta'\cdot\bfns(\bfx)>0$ everywhere.

\begin{figure}[ht!]
  \begin{center}
    \includegraphics[scale=1]{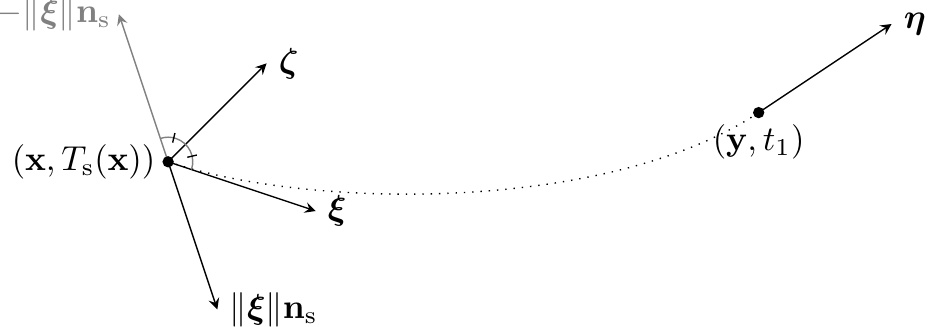}
  \end{center}
  \caption{Propagation of singularities at $(\bfx,T_{\rm s}(\bfx))$ 
    in space-time. See equation (\ref{equ_SCA_coivar}). The dotted line
    represents the ray. The endpoint of the ray at $(\bfy,t_1)$
    contributes to the scattered field. Here, $|\bfxi|\,\bfns$ and
    $\bfxi$ can respectively be interpreted as the wave numbers of the
    initial and reflected waves, and $\bfzeta$ a normal vector that can be
    associated with a reflector at $\bfx$.}
  \label{fig_scattering}
\end{figure}

If $(\bfx,\bfxi)$ is associated with a source ray, i.e.\ $\bfxi=|\bfxi|\,\bfns(\bfx)$, then $\bfzeta=0$ by (\ref{equ_SCA_coivar}). In that case there is no reflection. We show that away from the source rays the scattering operator $F$ is a FIO with an injective canonical relation, which will be made more precise. The practical implication is that source wave arrivals are excluded from the data before the receiver wave field is calculated.

The \textsl{direct source wave exclusion} (DSE) is the removal of the source singularities contained in $\Xi_\rms$ from the wave front set of the continued scattered wave field. Mathematically it will be applied by \mbox{$t$-families} of pseudodifferential operators $\pi_\rma(t)$ and $\pi_\rmb(t)$ that act on the Cauchy data $(u_{\rmh,\rma}(\cdot,t),u_{\rmh,\rmb}(\cdot,t))^T$. The symbol of $\pi_\rma(t_\rmc)$ is, for some fixed $t_\rmc$, a smooth cutoff function on $\cotbun{Y}$, being 0 on a narrow conic neighborhood of $\Xi_{\rms,t_\rmc}$ (cf.\ (\ref{equ_directsourcerays_tc})) and 1 outside a slightly larger conic neighborhood. Furthermore, we assume that $\pi_\rma$ satisfies 
\begin{equation} \label{equ_pi_a_family_property}
  \pi_\rma(t)=S_\rma(t-t_\rmc)\pi_\rma(t_\rmc)S_\rma(t_\rmc-t) ,
\end{equation}
which implies that the field $\pi_\rma u_{\rmh,\rma}$ still satisfies a homogeneous wave equation. The symbol $\pi_\rmb$ satisfies $\pi_\rmb(t;\bfx,\bfxi)=\pi_\rma(t;\bfx,-\bfxi)$.

Since, in the absence of multipathing, rays define paths of shortest traveltime between two points, we have the following property. Let $\bfx,\tilde\bfx\in D$ be not identical, then
\begin{equation}  \label{equ_SCA_TIC}
\begin{split}
   & \mathrm{if\ } \bfxi,\tilde\bfxi\in\Conic \mathrm{\ and\ } t_{\rm i}>0
         \mathrm{\ such\ that\ } (\tilde\bfx,\tilde\bfxi) = \Phi_{t_{\rm i}}(\bfx,\bfxi) \\
   & \mathrm{then\ } |T_{\rm s}(\tilde\bfx)-T_{\rm s}(\bfx)|<t_{\rm i} \mathrm{\ \ or\ \ } (\bfx,\bfxi)\in\Xi_{\rms,T_\rms(\bfx)}.
\end{split}
\end{equation}
If $\bfx$ and $\tilde\bfx$ lay on the same source ray then $|T_{\rm s}(\tilde\bfx)-T_{\rm s}(\bfx)|=t_{\rm i}$.

\smallskip

The central result is the theorem that the composition $\pi_\rma F_\rma$ is a FIO of which the canonical relation is the graph of an injective function. Let $V_{\rms,t}\subset\cotbun{Y}$ be the zero set of $\pi_\rma(t)$, a conic neighborhood of $\Xi_{\rms,t}$. With $\pi_\rmb F_\rmb=\overline{\pi_\rma F_\rma}$ we present the following
\begin{thm} \label{thm_Fa}
   Operator $\pi_\rma F_\rma$ defined above, is a FIO. Its canonical relation is
   \begin{multline}  \label{equ_SCA_canrel_global}
      \Lambda = \big\{ ((\bfy,t,\bfeta,\omega),(\bfx,\bfzeta))\ \big|\
                        (\bfy,\bfeta)\in(\cotbun{Y})\setminus V_{\rms,t},\ t\in\Real,\ \omega=-c(\bfy)|\bfeta|,\  \\
                        (\bfx,\bfxi)=\Phi_{T_{\rm s}(\bfx)-t}(\bfy,\bfeta),\ \bfzeta = \bfxi -|\bfxi|\,\bfns(\bfx)\,,\ \bfx\in D \big\}.
   \end{multline}
   The projection of $\Lambda$ to its outgoing variables, i.e.\ $(\bfy,t,\bfeta,\omega)$, is injective.
\end{thm}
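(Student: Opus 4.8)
The plan is to reduce the assertion about the global operator $\pi_\rma F_\rma$ — which maps $\mathcal{E}'(\Real^n)$ to $\mathcal{D}'(\Real^{n+1})$, the extra variable being the time $t$ at which the Cauchy data are read off — to the local operators of Theorem~\ref{thm_localF}, to obtain the FIO property by composing those with the solution operator, and finally to get the injectivity of the projection from property~\eqref{equ_SCA_TIC} together with the DSE cutoff. The first move is to rewrite, using the global representation~\eqref{equ_SCA_global}, the family property~\eqref{equ_pi_a_family_property} of $\pi_\rma$, and the group property of $S_\rma$,
\[
   \pi_\rma F_\rma \;=\; \sum_{i} S_\rma(t-t_{1i})\,\pi_\rma(t_{1i})\,F_\rma(t_{1i})\rho_i
\]
modulo regularizing operators. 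It then remains to show that each summand is an FIO and to identify and glue their canonical relations; the glueing is harmless, since the $\rho_i$ only localize in $\bfx$, so the local relations are restrictions of one and the same global object.

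For a single summand I would invoke Theorem~\ref{thm_localF}, which presents $F_\rma(t_{1i})\rho_i$ as the oscillatory integral~\eqref{equ_SCA_Fa} with phase $\phiT$ and amplitude $\mathrm{A_F}$, and recall the stationary-point computation above: on $\{\partial_\bfxi\phiT=0\}$ the ingoing covariable is $\bfzeta=-\partial_\bfx\phiT=\bfxi-|\bfxi|\,\bfns(\bfx)$ (cf.~\eqref{equ_SCA_coivar}) and the outgoing covariable is $\bfeta=\partial_\bfy\phiT$ (which never vanishes, $|\nabla_\bfy\alpha|=|\partial_t\alpha|/c\neq0$). This data fails to be a legitimate, zero-section-free canonical relation — and simultaneously the maximal-rank non-degeneracy of $\phiT$ fails — exactly along the source rays $\bfxi=|\bfxi|\,\bfns(\bfx)$, where $\bfzeta=0$; indeed the relevant $n\times n$ Hessian block is a rank-one perturbation of the identity with determinant $1+\partial_\bfx T_\rms\cdot\partial_t\partial_\bfxi\alpha$, which equals $1-\bfns(\bfx)\cdot\bfxi/|\bfxi|$ at $t_1=T_\rms(\bfx)$ and so vanishes precisely on those rays (and only there, for the short times of Theorem~\ref{thm_localF}). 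Here the DSE does its work: a source-ray point $(\bfx,\bfxi)$ is carried by the flow to an outgoing point $(\bfy,\bfeta)\in\Xi_{\rms,t_{1i}}$, and $\pi_\rma(t_{1i})$ is a pseudodifferential operator of order $0$ whose symbol vanishes on the conic neighbourhood $V_{\rms,t_{1i}}\supset\Xi_{\rms,t_{1i}}$; hence $\pi_\rma(t_{1i})F_\rma(t_{1i})\rho_i$ is microlocalized away from the source rays, where $\phiT$ is non-degenerate, and is a bona fide FIO. Composing on the left with $S_\rma(t-t_{1i})$ — an FIO whose canonical relation is the $t$-extended graph of $\Phi_{t-t_{1i}}$ lying over the characteristic set $\omega=-c(\bfy)|\bfeta|$ — is a clean composition (a solution operator of an evolution equation composes cleanly with any FIO), so the result is again an FIO by the calculus of FIOs \cite[theorem 2.4.1]{Duistermaat1996}; hence each summand, and therefore $\pi_\rma F_\rma$, is an FIO. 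Tracking the covariables through the composition ($\Phi$ preserves the frequency and carries $(\bfx,\bfxi)$ at time $T_\rms(\bfx)$ to $(\bfy,\bfeta)$ at time $t$; $\bfzeta$ is tied to $\bfxi$ via~\eqref{equ_SCA_coivar}; the cutoff forces $(\bfy,\bfeta)\notin V_{\rms,t}$; and $\supp(\rho_i)\subset D$ forces $\bfx\in D$) reproduces exactly the set $\Lambda$ of~\eqref{equ_SCA_canrel_global}.

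For the injectivity of the projection $((\bfy,t,\bfeta,\omega),(\bfx,\bfzeta))\mapsto(\bfy,t,\bfeta,\omega)$ I would argue directly. Suppose both $((\bfy,t,\bfeta,\omega),(\bfx,\bfzeta))$ and $((\bfy,t,\bfeta,\omega),(\tilde\bfx,\tilde\bfzeta))$ lie in $\Lambda$: then $\bfx,\tilde\bfx\in D$, $(\bfy,\bfeta)\notin V_{\rms,t}$, $(\bfx,\bfxi)=\Phi_{T_\rms(\bfx)-t}(\bfy,\bfeta)$ and $(\tilde\bfx,\tilde\bfxi)=\Phi_{T_\rms(\tilde\bfx)-t}(\bfy,\bfeta)$ for suitable $\bfxi,\tilde\bfxi$. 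Assume $\bfx\neq\tilde\bfx$. Eliminating $(\bfy,\bfeta)$ by the group property of $\Phi$ gives $(\tilde\bfx,\tilde\bfxi)=\Phi_{t_{\rm i}}(\bfx,\bfxi)$ with $t_{\rm i}:=T_\rms(\tilde\bfx)-T_\rms(\bfx)$; interchanging $\bfx$ and $\tilde\bfx$ if necessary, take $t_{\rm i}\ge0$. If $t_{\rm i}=0$ then $(\tilde\bfx,\tilde\bfxi)=(\bfx,\bfxi)$, contradicting $\bfx\neq\tilde\bfx$. If $t_{\rm i}>0$, property~\eqref{equ_SCA_TIC} applied to $\bfx,\tilde\bfx$ yields either $|T_\rms(\tilde\bfx)-T_\rms(\bfx)|<t_{\rm i}$, i.e.\ $t_{\rm i}<t_{\rm i}$, which is absurd, or $(\bfx,\bfxi)\in\Xi_{\rms,T_\rms(\bfx)}$ — but in that case $(\bfy,\bfeta)=\Phi_{t-T_\rms(\bfx)}(\bfx,\bfxi)$ lies on the same bicharacteristic, so $(\bfy,\bfeta)\in\Xi_{\rms,t}\subset V_{\rms,t}$ (recall $\Xi_\rms$ consists of full bicharacteristics, cf.~\eqref{equ_directsourcerays}), contradicting $(\bfy,\bfeta)\notin V_{\rms,t}$. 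Hence $\bfx=\tilde\bfx$; then $T_\rms(\bfx)=T_\rms(\tilde\bfx)$ forces $\bfxi=\tilde\bfxi$, and $\bfzeta=\tilde\bfzeta$ follows from~\eqref{equ_SCA_coivar}.

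I expect the main obstacle to be the first half: confirming that the only failure of the non-degeneracy of the local phase $\phiT$ sits over the source-ray set $\{\bfzeta=0\}$, and that the cutoff $\pi_\rma$ excises precisely that locus (in the outgoing variables, a conic neighbourhood of $\Xi_{\rms,t}$) — after which the composition with the evolution operator $S_\rma$ and the bookkeeping of $\Lambda$ are routine FIO calculus. The injectivity, by contrast, is short, precisely because the geometric content (absence of source caustics and of direct waves) has been pre-packaged into property~\eqref{equ_SCA_TIC} and the exclusion $(\bfy,\bfeta)\notin V_{\rms,t}$.
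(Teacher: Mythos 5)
Your proposal is correct and follows essentially the same route as the paper's proof: the decomposition $\pi_\rma F_\rma=\sum_i S_\rma(t-t_{1i})\pi_\rma(t_{1i})F_\rma(t_{1i})\rho_i$, non-degeneracy of $\phiT$ on the stationary point set via the determinant $1-\bfns(\bfx)\cdot\bfxi/|\bfxi|$ with the DSE excising the source rays where $\bfzeta=0$, composition with the evolution operator by the FIO calculus, and injectivity of the outgoing projection from property (\ref{equ_SCA_TIC}) together with the exclusion $(\bfy,\bfeta)\notin V_{\rms,t}$. The only notable differences are cosmetic: you omit the paper's explicit verification (the Jacobian factorization of $\Theta_t$ and Lemma~\ref{lem_aroundscatpoint}) that the relation is a diffeomorphism onto its image, which is not needed for the injectivity claim as stated, and your parenthetical assertion that the maximal-rank condition itself \emph{fails} on the source rays is unnecessary and not obviously true (the block $\partial_\bfy\partial_\bfxi\phiT$ may still have full rank there); what genuinely fails there is only that $\bfzeta$ meets the zero section.
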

We will first show that the composition $\pi_\rma(t_1)F_\rma(t_1)\rho$ is a FIO. Composition $\pi_\rma F_\rma$ is subsequently defined as the sum of local contributions, like in (\ref{equ_SCA_global}), and will also be called the `scattering operator'. The canonical relation becomes the union of the local relations. A part of the proof is put in lemma~\ref{lem_aroundscatpoint}. The operator can alternatively be defined by means of the bicharacteristics of the wave equation. The papers \cite{Rakesh1988,NolanSymes1997} show how this can be done, although their scattering operator does not fully coincide with ours.

\begin{proof} 
Because $\pi_\rma(t)S_{\rm a}(t-t_{1i})=S_{\rm a}(t-t_{1i})\pi_\rma(t_{1i})$ the scattering operator can be written as
\begin{equation}  \label{equ_SCA_piFglobal}
   \pi_\rma(t) F_{\rm a}(t) = {\sum}_{i\in\mathcal{I}} S_{\rm a}(t-t_{1i}) \pi_\rma(t_{1i}) F_{\rm a}(t_{1i}) \rho_i.
\end{equation}
Again omitting subscript $i$ to denote an arbitrary member of $\mathcal{I}$ we will argue that the local scattering operator $\pi_\rma(t_1)F_{\rm a}(t_1)\rho$ is a FIO. Then $\pi_\rma F_\rma$ becomes a sum of compositions of FIOs.

The local scattering operator is the oscillatory integral (\ref{equ_SCA_Fa}) in which the amplitude $\mathrm{A_F}$~(\ref{equ_SCA_phaseamplitude}) is replaced by $\pi_\rma(t_1;\bfy,\partial_\bfy{\alpha})\,\mathrm{A_F}$. This follows from the application of pseudodifferential operator $\pi_\rma(t_1)$, its symbol denoted by $\pi_\rma(t_1;\cdot,\cdot)$, on the integral \cite{Duistermaat1996,Treves80v2}. To be able to omit the zero set of $\pi_\rma(t_1)$ from the analysis of the phase $\phiT$ we define the conic set
\begin{equation*}
   W_{\rms,t_1} = \big\{ (\bfy,\bfx,\bfxi)\in Y\tines X\tines\Conic\ \big|\
               (\bfy,\bfeta)\in V_{\rms,t_1},\ (\bfx,\bfxi)=\Phi_{T_\rms(\bfx)-t_1}(\bfy,\bfeta),\ \bfx\in D \big\}.
\end{equation*}
The stationary point set of the phase function, by definition $\partial_\bfxi\phiT=0$, is given by
\begin{equation}  \label{equ_SCA_statset}
   \Sigma_{t_1} = \big\{ (\bfy,\bfx,\bfxi)\in (Y\tines X\tines\Conic)\setminus W_{\rms,t_1}\, \big|\
                                    \bfx=\partial_\bfxi{\alpha}(\bfy,t_1-T_{\rm s}(\bfx);\bfxi),\ \bfx\in\supp(\rho) \big\}.
\end{equation}
We observe that $|\partial_\bfx\partial_\bfxi\phiT| =
|\partial_\bfxi\partial_\bfx\phiT| = |\partial_\bfxi\bfzeta|$ by
definition of $\bfzeta$ (\ref{equ_SCA_coivar}). Moreover
\begin{equation}  \label{equ_SCA_dzetadxi}
   |\partial_\bfxi\bfzeta| = |{\rm I_n} - \frac{\bfxi}{|\bfxi|}
       \otimes \bfns(\bfx)| = 1 - \frac{\bfxi}{|\bfxi|}\cdot\bfns(\bfx) .
\end{equation}
By the DSE, applied as the omission of $W_{\rms,t_1}$ in
(\ref{equ_SCA_statset}), the condition $\bfxi\parallel\bfns(\bfx)$ is
never met, from which follows that the Jacobian
$|\partial_\bfxi\bfzeta|$ is nonsingular. This implies that the
derivative $\partial_{(\bfy,\bfx,\bfxi)}\partial_{\bfxi}\phiT$ has
maximal rank, making $\Sigma_{t_1}$ a closed smooth $2n$-dimensional
submanifold. The canonical relation relates ingoing (co)variables
$(\bfx,\bfzeta)$ with outgoing (co)variables $(\bfy,\bfeta)$ and is
given by
\begin{equation}  \label{equ_SCA_canrel}
   \big\{ ((\bfy,\bfeta),(\bfx,\bfzeta))\ \big|\
                        (\bfy,\bfx,\bfxi)\in\Sigma_{t_1},\ \bfeta=\partial_\bfy\phiT,\ \bfzeta=-\partial_\bfx\phiT  \big\}.
\end{equation}
The relation is the graph of a diffeomorphism. We postpone the proof until after the construction of the global scattering operator $\pi_\rma(t) F_\rma(t)$ as the local and the global arguments are basically the same. Therefore the local scattering operator is a FIO with a bijective canonical relation. 

The local operator will be composed with the solution operator. This gives a seamless extension because both operators are build on the same flow. It becomes $S_{\rm a}(t-t_1) \pi_\rma(t_1) F_{\rm a}(t_1) \rho$, which is a FIO. The canonical relation is determined by the composition of relations \cite{Duistermaat1996,Treves80v2}. The global scattering operator $\pi_\rma(t)F_\rma(t)$ is subsequently defined as the sum (\ref{equ_SCA_piFglobal}) of the extended local operators, of which the canonical relation $\Lambda_t$ is the union of the local relations (\ref{equ_SCA_canrel}).

We will argue that $\Lambda_t$ is the graph of an injection \mbox{$\Theta_t:(\cotbun{D})\setminus U_\rms\rightarrow(\cotbun{Y})\setminus V_{\rms,t}$} that is a diffeomorphism onto its image. We used the zero set of $\pi_\rma(t)$ expressed in the domain of $\Theta_t$:
\begin{equation}
   U_\rms = \big\{ (\bfx,\bfzeta)\in\cotbun{X}\ \big|\
                           (\bfx,\bfxi)\in V_{\rms,T_\rms(\bfx)},\ \bfzeta = \bfxi -|\bfxi|\,\bfns(\bfx)\,,\ \bfx\in D  \big\}.
\end{equation}
The injection implies that $\Lambda_t$, for a fixed $t$, can be
parameterized by $\bfy$ and $\bfeta$, so
\begin{multline}  \label{equ_SCA_canrel_t_global}
   \Lambda_t = \big\{ ((\bfy,\bfeta),(\bfx,\bfzeta))\ \big|\
          (\bfy,\bfeta)\in(\cotbun{Y})\setminus V_{\rms,t},
\\
   (\bfx,\bfxi) = \Phi_{T_{\rm s}(\bfx)-t}(\bfy,\bfeta),\
        \bfzeta = \bfxi -|\bfxi|\,\bfns(\bfx)\,,\ \bfx\in D \big\} .
\end{multline}
We now prove the existence and injectivity of $\Theta_t$. Without loss
of generality we assume that $t$ denotes a moment after the scattering
event.

Let $(\bfx,\bfzeta)\in(\cotbun{D})\setminus U_\rms$ be given. It can
be shown that the transformation $\bfxi\mapsto\bfzeta$ given in
(\ref{equ_SCA_coivar}) is injective on the complement of $U_\rms$ and
thus determines a unique $(\bfx,\bfxi)$. By ray tracing over $t-T_{\rm
  s}(\bfx)$, i.e.\ mapping by $\Phi_{t-T_{\rm s}(\bfx)}$, one finds
$(\bfy,\bfeta)$.

Let $(\bfy,\bfeta)\in(\cotbun{Y})\setminus V_{\rms,t}$ be given. This
uniquely determines a bicharacteristic. By ray tracing backwards,
i.e.\ by $\Phi_{T_{\rm s}(\bfx)-t}$ with $T_{\rm s}(\bfx)-t<0$, the
ray goes through $(\bfx,T_{\rm s}(\bfx))$ in space-time. If a second
point $(\tilde\bfx,T_{\rm s}(\tilde\bfx))$ is met,
property (\ref{equ_SCA_TIC}) (SME) implies
that the bicharacteristic coincides with one from the source. The
condition $(\bfy,\bfeta)\notin V_{\rms,t}$ (DSE) rules out this
possibility, leading to the conclusion that $\bfx$ is unique. The
covariable $\bfxi$ uniquely follows from the ray tracing, and is
mapped to $\bfzeta$ by (\ref{equ_SCA_coivar}). The transformation
$\Theta_t$ is therefore one-to-one.

To prove the smoothness we analyse the scattering event around a fixed point $(\bfx_0,\bfxi_0)$, of which \mbox{$\bfx_0\in\supp(\rho)$}, and define $\tau_0=T_{\rm s}(\bfx_0)$. Now $\Theta_t$ can be factorized as follows
\begin{equation*}  \label{equ_SCA_XXX}
   (\bfx,\bfzeta) \xrightarrow{(\ref{equ_SCA_coivar})} (\bfx,\bfxi) \xrightarrow{\Phi_{\tau_0-T_{\rm s}(\bfx)}}
   (\check\bfx,\check\bfxi) \xrightarrow{\Phi_{t-\tau_0}} (\bfy,\bfeta).
\end{equation*}
The Jacobian of $\Theta_t$ becomes the product of three Jacobians, namely
\newcommand{\bv}{\Big|}
\begin{equation}
	\bv\frac{\partial(\bfy,\bfeta)}{\partial(\bfx,\bfzeta)}\bv \ = \ \bv\frac{\partial(\bfy,\bfeta)}{\partial(\check\bfx,\check\bfxi)}\bv
                                                                   \ \bv\frac{\partial(\check\bfx,\check\bfxi)}{\partial(\bfx,\bfxi)}\bv
                                                                   \ \bv\frac{\partial(\bfx,\bfxi)}{\partial(\bfx,\bfzeta)}\bv.
\end{equation}
The leftmost factor in the right hand side is nonsingular because $\Phi_{t-\tau_0}$ is a diffeomorphism. The rightmost factor in the right hand side is nonsingular because the map $\bfxi\mapsto\bfzeta$ has a positive Jacobian (\ref{equ_SCA_dzetadxi}). The transformation $\Phi_{\tau_0-T_{\rm s}(\bfx)}$ is the least obvious one. We will show in lemma \ref{lem_aroundscatpoint} that it is a smooth bijection. Therefore $\Theta_t$ is a diffeomorphism onto its image.

So far $t$ was held fixed to simplify the presentation. Time dependence is determined by the flow $\Phi_t$. This allows $t$ to be included in the canonical relation $\Lambda$ of the scattering operator $\pi_\rma F_{\rm a}$, which is a map to spacetime distributions. Parameterized by $\bfy$, $\bfeta$ and $t$, $\Lambda$ becomes (\ref{equ_SCA_canrel_global}). The injectivity follows from the parameterization.
\end{proof}

\begin{lem}
Let $\tau_0=T_{\rm s}(\bfx_0)$ and $s(\bfx)=\tau_0-T_{\rm
  s}(\bfx)$. If $J(\bfx,\bfxi)=\Phi_s(\bfx,\bfxi)$ then $J$ is a
smooth bijection that maps $(\bfx_0,\bfxi_0)$ onto itself. Its
Jacobian is
\begin{equation}
    \det\partial_{(\bfx,\bfxi)}J(\bfx_0,\bfxi_0)
          = 1 - \frac{\bfxi_0}{|\bfxi_0|}\cdot\bfns(\bfx_0),
\end{equation}
which is nonsingular by the DSE.
\label{lem_aroundscatpoint}
\end{lem}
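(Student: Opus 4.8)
The plan is to separate the three assertions: the fixed-point property and smoothness are essentially immediate, and the real content is the Jacobian formula, from which nonsingularity and local bijectivity follow. First, since $s(\bfx_0)=\tau_0-T_{\rm s}(\bfx_0)=0$ and $\Phi_0$ is the identity map, $J(\bfx_0,\bfxi_0)=(\bfx_0,\bfxi_0)$. Smoothness holds because $J$ is a composition of smooth maps: $(t,\bfx,\bfxi)\mapsto\Phi_t(\bfx,\bfxi)$ is smooth by smooth dependence of solutions of the characteristic system (\ref{equ_IVPFIO_MOC_charode}) on initial data and parameters, and $\bfx\mapsto s(\bfx)=\tau_0-T_{\rm s}(\bfx)$ is smooth near $\bfx_0$, since $\bfx_0\in D$ is bounded away from $\bfx_\rms$ where $T_{\rm s}$ is a smooth solution of the source eikonal equation; after shrinking the neighbourhood, $|s(\bfx)|$ stays small enough that $\Phi_{s(\bfx)}$ is defined.

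For the Jacobian I would write $\Phi_t(\bfx,\bfxi)=(\bfy(t;\bfx,\bfxi),\bfeta(t;\bfx,\bfxi))$, so that $J(\bfx,\bfxi)=(\bfy(s(\bfx);\bfx,\bfxi),\bfeta(s(\bfx);\bfx,\bfxi))$, and differentiate at $(\bfx_0,\bfxi_0)$ by the chain rule. Because $s(\bfx_0)=0$, the derivatives of $\Phi_t$ in its initial point, evaluated at $t=0$, form the $2n\times 2n$ identity, since $\bfy(0;\bfx,\bfxi)=\bfx$ and $\bfeta(0;\bfx,\bfxi)=\bfxi$. As $s$ is independent of $\bfxi$, the only correction is the rank-$n$ term $\partial_t\Phi_t|_{t=0}\otimes\partial_\bfx s(\bfx_0)$ coming from differentiating the time argument in the $\bfx$-variable; here $\partial_t\bfy|_{t=0}=c(\bfx_0)\bfxi_0/|\bfxi_0|$ and $\partial_t\bfeta|_{t=0}=-(\nabla c)(\bfx_0)|\bfxi_0|$ by (\ref{equ_IVPFIO_MOC_charode}), and $\partial_\bfx s(\bfx_0)=-\partial_\bfx T_{\rm s}(\bfx_0)=-\bfns(\bfx_0)/c(\bfx_0)$ by (\ref{equ_SCA_swd_vector}). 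The result is a block lower-triangular matrix whose diagonal blocks are ${\rm I_n}-\tfrac{\bfxi_0}{|\bfxi_0|}\otimes\bfns(\bfx_0)$ and ${\rm I_n}$ (the off-diagonal block, a multiple of $(\nabla c)(\bfx_0)\otimes\bfns(\bfx_0)$, is irrelevant). Hence $\det\partial_{(\bfx,\bfxi)}J(\bfx_0,\bfxi_0)=\det\bigl({\rm I_n}-\tfrac{\bfxi_0}{|\bfxi_0|}\otimes\bfns(\bfx_0)\bigr)=1-\tfrac{\bfxi_0}{|\bfxi_0|}\cdot\bfns(\bfx_0)$ by the rank-one determinant identity, consistently with (\ref{equ_SCA_dzetadxi}).

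Finally, $\bfns=c\,\partial_\bfx T_{\rm s}$ is a unit vector since the source traveltime obeys $c|\partial_\bfx T_{\rm s}|=1$, so $1-\tfrac{\bfxi_0}{|\bfxi_0|}\cdot\bfns(\bfx_0)\ge 0$, with equality exactly when $\bfxi_0=|\bfxi_0|\,\bfns(\bfx_0)$, i.e.\ when $(\bfx_0,\bfxi_0)$ lies on a source ray. The DSE excludes precisely this set, so the Jacobian is strictly positive; the inverse function theorem then shows that $J$ is a local diffeomorphism near $(\bfx_0,\bfxi_0)$, and injectivity on the conic neighbourhood of the scattering event follows as in the proof of Theorem~\ref{thm_Fa}. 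I expect the only genuine work to be the chain-rule computation giving the block-triangular Jacobian; the point requiring care is to keep separate the derivative of $\Phi_t$ with respect to its initial point from the derivative picked up through the $\bfx$-dependent time $s(\bfx)$, and to notice that the off-diagonal block cannot affect the determinant.
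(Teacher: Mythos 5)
Your proof is correct and follows essentially the same route as the paper: the fixed-point and smoothness claims are dispatched by the smoothness of $T_{\rm s}$ and of the flow, and the Jacobian is computed by the chain rule as the identity plus the rank-one correction $\partial_t\Phi_t|_{t=0}\otimes\partial_\bfx s(\bfx_0)$, yielding the same block lower-triangular matrix and the rank-one determinant identity. Your added remarks on $|\bfns|=1$ and the role of the DSE are consistent with the surrounding discussion in the paper.
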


\begin{proof}
For $\bfx$ in the neighborhood of $\bfx_0$ one has $s(\bfx)\in I$, so
$\Phi_s$ is defined. The smoothness of $J$ follows directly from the
smoothness of $\bfx\mapsto T_{\rm s}(\bfx)$ and $\Phi_s$ in its
arguments including $s$. The Jacobian results from the straight
forward calculation
\begin{equation*}
\begin{split}
    \partial_{(\bfx,\bfxi)}J(\bfx_0,\bfxi_0)
        &= \partial_{(\bfx,\bfxi)}\Phi_0(\bfx_0,\bfxi_0) + \partial_s\Phi_0(\bfx_0,\bfxi_0) \otimes \partial_{(\bfx,\bfxi)}s(\bfx_0)    \\ 
        &= \mat{{\rm I_n} & 0}{0 & {\rm I_n}} + \mat{c(\bfx_0)\tfrac{\bfxi_0}{|\bfxi_0|}}{-|\bfxi_0|\, \partial_\bfx c(\bfx_0)} \otimes
            \big(-\partial_\bfx T_{\rm s}(\bfx_0)\ \ 0\,\big)                                                                                       \\
        &= \mat{{\rm I_n} - \tfrac{\bfxi_0}{|\bfxi_0|}\otimes\bfns(\bfx_0) & 0}
            {|\bfxi_0|\, \partial_\bfx c(\bfx_0)\otimes\partial_\bfx T_{\rm s}(\bfx_0) & {\rm I_n}}.
\end{split}
\end{equation*}
Herein we substitute the right-hand side of the characteristic
ODE~(\ref{equ_IVPFIO_MOC_charode}) for $\partial_s\Phi_s$.
\end{proof}

\section{Reverse time continuation from the boundary}
\label{sec_up_down_PsDO}

The receiver wave field is modeled by the reverse time continued wave
$u_\rmr$. In this section, we show that $u_\rmr$ is the result of a
pseudodifferential operator of order zero acting on the continued
scattered wave $u_\rmh$. We refer to it as the \textsl{revert
  operator} $P$.

The processes that are modeled by $P$ are the propagation of the
scattered wave field from a certain time, say $t=t_\rmc$, to the
surface at $x_n=0$, the restriction of the wave field to the
acquisition domain, the data processing, and eventually the
continuation in reverse time. The revert operator suppresses the part
of the scattered wave field that cannot be recovered because the
contributing waves do not reach the acquisition domain. The data
processing comprises a spatial smooth cutoff on the acquisition
domain, the removal of direct source waves and the removal of waves
reaching the surface following grazing rays. The final reconstruction
represents a field related to bicharacteristics that intersect the
acquisition domain $M$ only once, and in the upgoing direction.

Let $u$ be the solution to the homogeneous wave equation. When we
apply the result of this section to develope the inverse scattering, we
set $u = u_\rmh$. Let $M$ be a bounded open subset of $\{ (\bfx,t) \in
\Real^{n+1} \, | \, x_n = 0 \}$ and let $T_M u$ denote the restriction
of $u$ to $M$. We denote $\bfx' = (x_1,\ldots,x_{n-1})$, so
$(\bfx',t)$ are coordinates on $M$. The field $u_\rmr$ is an
anticausal solution to
\begin{equation} \label{eq:backprop_PDE}
  \left[ c(\bfx)^{-2} \partial_t^2 - \Delta\right] u_\rmr(\bfx,t)
    = \delta(x_n)  F_M T_M u(\bfx',t) ,
\end{equation}
where $F_M$ is defined as follows.

The boundary operator $F_M$ consists of two types of factors. A
pseudodifferential operator accounts for the fact that the boundary
data for the backpropagation enters as a source and not as a boundary
condition. This operator is given by
\begin{equation} \label{eq:one_way_factor}
  -2 \I D_t c^{-1} \sqrt{1- c^2 D_t^{-2} D_{\bfx'}^2} ,\quad
  D_t = \I^{-1} \partial_t ,\ D_{\bfx'} =  \I^{-1} \partial_{\bfx'} .
\end{equation}
The singularity in the square root is avoided by the cutoff for grazing
rays, see below. The second type of factor is composed of three cutoffs:
\renewcommand{\labelenumi}{(\roman{enumi})}
\begin{enumerate}
\itemsep=0pt
   \item
   The multiplication by a cutoff function that smoothly goes to zero
   near the boundary of the acquisition domain. The distance over
   which it goes from 1 to 0 in practice depends on the wavelengths
   present in the data.
   \item
   The second cutoff is a pseudodifferential operator which removes
   waves that reach the surface along tangently incoming rays. Its
   symbol is zero around $(\bfx',t,\bfxi',\omega)$ such that
   \[
     c(\bfy',0) | (\bfeta',0) | = \pm \omega  ,
   \]
   and 1 some distance away from this set.  If, given the velocity and
   the support of $\delta c$, there are no tangent rays, this cutoff
   is not needed.
   \item
   The third cutoff suppresses direct rays. Since the velocity model
   is assumed to be known, these can be identified.
\end{enumerate}
We write $\Psi_M(\bfx',t,\bfxi',\omega)$ for the symbol of the
composition of these pseudodifferential cutoffs. The principal symbol
of $F_M$ is then
\begin{equation} \label{eq:prin_sym_F_M}
  -2 \I \omega c^{-1} \sqrt{1- c^2 \omega^{-2} \bfxi'^2} \,
  \Psi_M(\bfx',t,\bfxi',\omega) .
\end{equation}

The decoupling procedure presented above yields two fields $u_{\rm a}$
and $u_{\rm b}$, associated respectively with the negative and
positive frequencies in $u$. We will show that $u_{\rmr,\rma}$ and
$u_{\rmr,\rmb}$ depend locally on $u_\rma$ and $u_\rmb$ in the
following fashion,
\begin{equation} \label{eq:time_RT_main}
\begin{split}
  \chi u_{\rmr,\rm a}(\cdot, t) 
  = {}& \chi \big[ P_{\rm a}(t) u_{\rm a}(\cdot.t) 
    \, + R_1(t) u_{\rm b}(\cdot,t) \big]
\qquad \text{ and } \\
  \chi u_{\rmr,\rm b}(\cdot, t) = {}& \chi \big[ P_{\rm b}(t) u_{\rm b}(\cdot,t)
    + R_2(t) u_{\rm a}(\cdot, t) \big].
\end{split}
\end{equation}
Here, $P_{\rm a}(t)$ and $P_{\rm b}(t)$ are pseudodifferential
operators described below and $R_1(t)$ and $R_2(t)$ are regularizing
operators, and $\chi$ is a cutoff because the source in equation (\ref{eq:backprop_PDE})
causes waves in both sides of $x_n=0$. Note that the decoupling, which so far was mostly a
technical procedure, turns out to be essential to characterize the
reverse time continued field. The revert operator in matrix form will
be defined as the \mbox{$t$-family} of pseudodifferential operators
\begin{equation}  \label{equ_RTM_operatorP}
   P(t) = V \mat{P_\rma(t) & R_1(t)}{R_2(t) & P_\rmb(t)} \Lambda .
\end{equation}
Waves are assumed to hit the set $M$ coming from $x_n > 0$. We assume
$\supp(\chi)$ to be compact and contained in the set with $x_n > 0$,
and we invoke the following assumption
\begin{equation} \label{eq:assume_rays_one-to-one}
  \text{bicharacteristics through $M$ and $\supp(\chi)$ intersect $M$
    only once and with $dx_n/dt < 0$.}
\end{equation}

The operators $P_{\rm a}$ and $P_{\rm b}$ depend on $F_M$ and on the
bicharacteristic flow in space-time between the hyperplanes $t=0$ and
$x_n=0$.  Let $X_s$ denote the set $\Real^n \times \{ s \}
\subset \Real^n_{\bfx} \times \Real_t$. The bicharacteristic flow
provides a map
\[
  (\bfx,\bfxi) \mapsto (\bfy_{\rm a}'(\bfx,\bfxi,t),t,
    \bfeta_{\rm a}'(\bfx,\bfxi,t),-c(\bfx)|\bfxi|) ,
\]
from $T^* X_0$ to $T^* M$. The principal symbols of $P_{\rm a}$,
$P_{\rm b}$, which we will denote by $p_\rma$, $p_\rmb$, are then
given by the following transported versions of $\Psi_M$:
\begin{equation} \label{eq:prin_sym_RTC}
  \Psi_{X_0,\rm a}(\bfx,\bfxi) = \left\{  
    \begin{array}{ll} \Psi_M(\bfy_{\rm a}'(\bfx,\bfxi,t),t,
              \bfeta_{\rm a}'(\bfx,\bfxi,t), -c(\bfx)|\bfxi|)
  & \text{when $\exists t$ with $\bfy_{\rm a}(\bfx,\bfxi,t) \in M$},
\\
  0 & \text{otherwise} ,
    \end{array} \right.
\end{equation}
and $\Psi_{X_0,\rm b}$ is defined similarly using the $(\bfy_{\rm b},
\bfeta_{\rm b})$ flow. We can now state and prove the following

\begin{thm} \label{thm_P}
Let $u_{\rmr,\rm a}$, $u_{\rmr,\rm b}$ and $u_{\rm a}$, $u_{\rm b}$,
$\chi$ and $M$ be as just defined. Equation (\ref{eq:time_RT_main}) holds, in
which $P_{\rm a}$ and $P_{\rm b}$ are pseudodifferential operators in
$\Op S^0(\Real^n)$, the principal symbols of which are given by
\begin{equation}
  p_\rma(t;\bfx,\bfxi) = \Psi_{X_0,\rm a}(\bfx,\bfxi)
\qquad \qquad \text{and} \qquad \qquad
  p_\rmb(t;\bfx,\bfxi) = \Psi_{X_0,\rm b}(\bfx,\bfxi) ,
\end{equation}
respectively. The operators $P_\rma$, $P_\rmb$ satisfy property
(\ref{equ_pi_a_family_property}) as far as they are uniquely
determined considering the cutoff $\chi$ in (\ref{eq:time_RT_main}).
\end{thm}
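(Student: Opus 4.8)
The idea is to write $u_\rmr$ via the anticausal Duhamel formula, pass to the decoupled fields, and then recognise the resulting map on the snapshots $u_\rma(\cdot,t)$, $u_\rmb(\cdot,t)$ as a composition of Fourier integral operators whose canonical relation, on the microlocal region cut out by $\chi$, is the diagonal — hence a pseudodifferential operator — carrying the stated principal symbol. Concretely, multiplying (\ref{eq:backprop_PDE}) by $c^2$ and applying Duhamel's principle backward in time together with the decoupling (\ref{equ_IVFIO_relation_12ab}), the anticausal solution is obtained by propagating, from each time $s$, the Cauchy data of $c^2\,\delta(x_n)\otimes(F_M T_M u)(\cdot,s)$ and integrating over $s\in[t,\infty)$. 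Extracting the $\rma$-component and inserting $u(\cdot,s)=c\big(S_\rma(s-t)u_\rma(\cdot,t)+S_\rmb(s-t)u_\rmb(\cdot,t)\big)$ (from $V$ and (\ref{equ_IVFIO_relation_12ab})) yields, modulo regularizing operators, a schematic expression
\[
   u_{\rmr,\rma}(\cdot,t)=c_0\!\int_t^\infty\! S_\rma(t-s)\,B^{-1}\Big[c^2\,\delta(x_n)\otimes F_M\,T_M\,c\big(S_\rma(s-t)u_\rma(\cdot,t)+S_\rmb(s-t)u_\rmb(\cdot,t)\big)\Big]ds
\]
with $c_0$ a constant, and a symmetric one for $u_{\rmr,\rmb}$; the coefficient of $u_\rma(\cdot,t)$ is $P_\rma(t)$, that of $u_\rmb(\cdot,t)$ is $R_1(t)$, and likewise $P_\rmb(t)$, $R_2(t)$.

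Next I would treat each of these as a composition of: the FIOs $S_\rma,S_\rmb$, whose canonical relation is the graph of the bicharacteristic flow $\Phi$ of subsection~\ref{subsec_FIO_S}, and multiplications by $c$; the restriction $T_M$ to the space-time hypersurface $M$, an FIO with canonical relation the crossing relation, which is clean and transversal because the grazing-ray cutoff (ii) in $F_M$ excludes tangent rays; the $\Psi$DO $F_M$; the single-layer extension $\delta(x_n)\otimes(\cdot)$, which up to orders is the transpose of $T_M$; the $\Psi$DOs $B^{-1}$ and $c^2$; and the $s$-integration, which performs a stationary-phase reduction. Assumption (\ref{eq:assume_rays_one-to-one}) ensures that for $\bfx\in\supp\chi$ the bicharacteristic through $(\bfx,\bfxi)$ at time $t$ either meets $M$ exactly once, transversally, with $dx_n/dt<0$, or never meets it. Tracking such a singularity through the chain: $S_\rma$ transports $(\bfx,\bfxi)$ to the crossing point; $T_M$ keeps it; $F_M$ keeps it weighted by $\Psi_M$, or kills it; the single-layer extension spreads wavefront over all $\xi_n$, of which the anticausal solve propagates backward into $x_n>0$ exactly the branch on the characteristic set matching the incoming ray — here $\chi$, supported in $x_n>0$, is essential, since the complementary branch re-enters $x_n<0$ — and that branch retraces the same bicharacteristic back to $(\bfx,\bfxi)$ at time $t$. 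Hence the canonical relation of $P_\rma(t)$ lies in the diagonal, so $P_\rma(t)\in\Op S^0(\Real^n)$, the order $1$ of $F_M$ being offset, in the usual order count for the composition, by the orders of $B^{-1}$, of the restriction and extension operators, and of the anticausal solve; and everything is microlocalized where $\chi\neq0$, as the statement requires. Since the bicharacteristics underlying $u_\rmb$ carry the opposite frequency sign $\omega=+c|\bfxi|$, which is preserved through $T_M$, $F_M$, the selected re-emission branch and the anticausal propagation, that contribution lands in $u_{\rmr,\rmb}$ and not in $u_{\rmr,\rma}$; thus $R_1(t)$, and symmetrically $R_2(t)$, is regularizing, giving (\ref{eq:time_RT_main}).

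The principal symbol is then computed by transporting amplitudes along this chain. The two $S_\rma$-factors contribute reciprocal Jacobians $\sqrt{\det\partial_\bfy\bfx}$ (cf.\ (\ref{equ_IVPFIO_psSa})) that cancel; $T_M$ and the single-layer extension contribute reciprocal Jacobian factors governed by the transversality of the crossing, i.e.\ by the non-tangency factor $\sqrt{1-c^2\omega^{-2}\bfxi'^2}$, and the $s$-stationary phase contributes its reciprocal; the remaining factors $B^{-1}$, $c^2$, $c_0$ then combine with $F_M$'s principal symbol $-2\I\omega c^{-1}\sqrt{1-c^2\omega^{-2}\bfxi'^2}\,\Psi_M$ so that everything but the transported cutoff cancels — this is precisely the design role of the prefactor in (\ref{eq:one_way_factor}) — leaving $p_\rma(t;\bfx,\bfxi)=\Psi_M$ evaluated at the crossing point of the bicharacteristic, i.e.\ $\Psi_{X_0,\rma}(\bfx,\bfxi)$, and $0$ when the ray never meets $M$; and likewise for $p_\rmb$. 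Property (\ref{equ_pi_a_family_property}) follows because $P_\rma(t)$ is, by construction, a fixed $\Psi$DO symbol transported along $\Phi$, and the conjugation $S_\rma(t-t_\rmc)(\cdot)S_\rma(t_\rmc-t)$ implements precisely that transport on principal symbols — and on the full operator up to the lower-order freedom left by the cutoff $\chi$ in (\ref{eq:time_RT_main}).

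The main obstacle is this last symbol bookkeeping together with the branch choice in the step before it: one must verify that the anticausal prescription selects the $\xi_n$-branch re-entering $x_n>0$, and that the prefactor $-2\I D_t c^{-1}\sqrt{1-c^2 D_t^{-2}D_{\bfx'}^2}$ of $F_M$ is exactly the factor that renders the net amplitude of the composition equal to the transported cutoff (so that $p_\rma=\Psi_{X_0,\rma}$ with coefficient $1$). By contrast, transversality of $T_M$, cleanness of the $s$-composition, and the diagonal form of the resulting canonical relation are routine once assumption (\ref{eq:assume_rays_one-to-one}) and the grazing-ray cutoff are in force.
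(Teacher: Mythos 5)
Your proposal follows essentially the same route as the paper: factor the map through the boundary data $f_M=F_MT_Mu$, treat the forward map $(u_{0,\rma},u_{0,\rmb})\mapsto f_M$ and the anticausal backward map $f_M\mapsto\chi u_{\rmr,\rma}$ as FIOs with invertible canonical relations (using assumption (\ref{eq:assume_rays_one-to-one}) and the grazing-ray cutoff for transversality, and $\chi$ plus anticausality to select the $\eta_n$-branch re-entering $x_n>0$), identify the cross terms as smoothing by the frequency-sign separation, and obtain the principal symbol from a stationary-phase computation in which the Jacobian and obliquity factors cancel against the prefactor of $F_M$, with Egorov's theorem handling longer times and property (\ref{equ_pi_a_family_property}). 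The paper's proof carries out the amplitude bookkeeping you flag as the main obstacle explicitly (via the change of variables $X(\bfy',t,\bfzeta,\bfxi)$ and the identities $\sqrt{1-c^2\omega^{-2}\bfeta'^2}=\cos\theta_M$, $|\partial y_n/\partial t|=c\cos\theta_M$), confirming exactly the cancelation you predict.
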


The proof will be presented in the remainder of this section.  If we
take Cauchy values at $t=t_\rmc$, then for small $| t-t_{\rmc} |$,
$T_M u(\bfx',t)$ can be described by the local FIO representation of
the solution operator. This representation can also be used for the
description of the map from $T_M u$ to $u_{\rmr}(\cdot,t_\rmc)$.  The
result can then be proven by an explicit use of the method of
stationary phase. For longer times we apply a partition of unity in
time to $T_M u(\bfx',t)$, so that for each contribution the length of
the time interval is small enough to apply the local FIO
representation. Egorov's theorem will be used to reduce to the short
time case. Alternatively one could consider one-way wave theory as a
method of proof.

\begin{proof}
We prove (\ref{eq:time_RT_main}) for some given $t$. Without loss of
generality we may assume that $t=0$. The field $u$, by assumption, solves the
homogeneous wave equation and is determined (possibly modulo a smooth contribution)
by the Cauchy values $u_{0,\rm a}=u_{\rma}(\cdot,0)$ and $u_{0,\rm b}=u_\rmb(\cdot,0)$.
Consider the equation $\partial_t u_{\rm a} = -\I B u_{\rm a}$. In this proof we
write $S_{\rm a}(t,s)$ instead of $S_{\rm a}(t-s)$ for the operator
that maps initial values at time $s$ to the values of the solution at
time $t$. We write $S_{\rm a}(\cdot, s)$ for the operator that maps an
initial value at time $s$ to the solution as a function of $(\bfx,t)$,
$t > s$.
We will write $S_{\rm a}(t,\cdot)$ for the operator that gives the
anticausal solution to $(\partial_t + \I B u_{\rm a})u_{\rm a} =
f_{\rm a}$,
\[
  S_{\rm a}(t,\cdot) f_\rma
    = - \int_t^\infty S_{\rm a}(t,s) f_{\rm a}(\cdot,s) \, ds .
\]
Note that $S_\rma(t,\cdot)$ maps a function of $(\bfx,t)$ to a function of $\bfx$ and that $S_{\rm a}(t,\cdot) = - S(\cdot, t)^*$.
The restriction operator $T_M$ introduced above maps
$C^\infty(\Real^n \times \Real) \rightarrow C^\infty(M)$ and is given
by
\[
  T_M u(\bfx',t) = u(\bfx',0,t) ,\qquad (\bfx',t) \in M .
\]
The adjoint of this operator is given by the following. With auxiliary function $f$ it is:
\[
  T_M^* f(\bfx,t) = \delta(x_n) f(\bfx',t) .
\]
These operators are well defined on suitable sets of distributions.
We use the notation (cf.\ (\ref{eq:backprop_PDE}))
\[
  f_M(\bfx',t) = F_M T_M u(\bfx',t)
\]
and study the map $(u_{0,\rm a},u_{0,\rm b}) \mapsto f_M$.  It follows
from the results on decoupling that
\begin{equation} \label{eq:formula_f_RTC}
  f_M = F_M T_M
    ( c S_{\rm a} u_{0,\rm a} + c S_{\rm b} u_{0,\rm b} ) ,
\end{equation}
modulo a smooth error. Following this decoupling, we analyze the map
$u_{0,\rm a} \mapsto F_M T_M c S_{\rm a} u_{0,\rm a}$.

To begin with, there exists a pseudodifferential operator $\tilde{F}_M$
such that
\[
  F_M T_M u =  T_M \tilde{F}_M u
\]
modulo a smooth function. This holds for a distribution $u$ that satisfies
$| \bfxi | \le C |\omega|$ in $\wfs(u)$ for some $C$, like the solution of
the homogeneous wave equation. Naturally,
$\tilde{F}_M(\bfx,t,\bfxi,\omega) \ne F_M(\bfx',t,\bfxi',\omega)$,
because then the symbol property would not be satisfied around the
line $(\bfxi',\omega)=0$, $\xi_n \neq 0$, but in the neighborhood of
this line the symbol can be modified without affecting the
singularities since $| \bfxi | \le C |\omega|$ in $\wfs(u)$. Thus the
first term in (\ref{eq:formula_f_RTC}) is given by
\begin{equation} \label{eq:first_term_RTC}
  T_M \tilde{F}_M c S_{\rm a}(\cdot,0) 
\end{equation}
acting on $u_{0,\rm a}$, which is a product of Fourier integral
operators.  The operator $S_{\rm a}(\cdot,0)$ has canonical relation
\begin{equation} \label{eq:RTC_CR1}
  \big\{ ((\bfy_{\rm a}(\bfx,\bfxi,t), t, \bfeta_{\rm a}(\bfx,\bfxi,t),-c(\bfx) |\bfxi|), 
    (\bfx,\bfxi)) \big\}.
\end{equation}
The operator $\tilde{F}_M$ removes singularities propagating on rays
that are tangent or close to tangent to the plane $x_n=0$, and the
restriction operator to $x_n=0$ has canonical relation
\begin{equation} \label{eq:RTC_CR2}
  \big\{ ((\bfy',t,\bfeta',\omega), (\bfy',0,t,\bfeta',\eta_n,\omega)) \big\} .
\end{equation}
As tangent rays are removed, the composition of canonical relations
(\ref{eq:RTC_CR2}) and (\ref{eq:RTC_CR1}) is transversal. Therefore,
(\ref{eq:first_term_RTC}) is a Fourier integral operator. Moreover,
from assumption (\ref{eq:assume_rays_one-to-one}) it follows that the
canonical relation is the graph of an invertible map, given by
\[
  \big\{ ((\bfy_{\rm a}'(\bfx,\bfxi,t), t, \bfeta_{\rm a}'(\bfx,\bfxi,t),-c(\bfx) |\bfxi|),
     (\bfx,\bfxi)) \, \big|\ \text{$t$ s.t.\ $\bfy_{\rm a}(\bfx,\bfxi,t) \in M$} \big\} ,
\]
or more precisely a subset of this set, taking into account the essential
support of $F_M$.

Next, we consider the map $f_M \mapsto \chi u_{\rmr,\rma}$. We insert a pseudodifferential cutoff \mbox{$\Xi(\bfx',t,D_{\bfx'},D_t)$}.
It cuts out tangent rays and is defined such that $\Xi F_M = F_M$. Using the decoupling procedure of section \ref{subsec_decoupling},
the source $(f_\rma,f_\rmb)$ for the inhomogeneous wave equation 
are given by $(f_\rma,f_\rmb)^T = \Lambda (0,c f)^T$, hence 
$\chi u_{\rmr,\rma}$ satisfies
\[
  \chi u_{\rmr,\rma}(\cdot,0) = \chi S_{\rm a}(0,\cdot) ( \tfrac{\I}{2} B^{-1}  c)
     T_M^*  \Xi  f_M
\]
There exists an operator $\tilde{\Xi}$ such that
$     T_M^* \Xi f = \tilde{\Xi}     T_M^* f$
at least microlocally on the set $| \bfxi | \le C |\omega|$ for large $C$.
Then $\chi u_{\rmr,\rma}(\cdot,0)$ is given by the operator
\[
  \chi S_{\rm a}(0,\cdot) ( \tfrac{\I}{2} B^{-1}  c)
     \tilde{\Xi} T_M^*  
\]
acting on $f_M$, modulo a smoothing operator.

The operator $\chi S_{\rm a}(0,\cdot) ( \tfrac{\I}{2} B^{-1}  c)
     \tilde{\Xi}$ is a Fourier integral operator with canonical
relation
\[
  \big\{ ((\bfx,\bfxi), (\bfy(\bfx,\bfxi,t),t,
           \bfeta(\bfx,\bfxi,t),-c(\bfx) |\bfxi|)) \, \big|\ 
    |\eta_n(\bfx,\bfxi,t)| \ge \epsilon ,\ \epsilon > 0 \big\} .
\]
For an element $(\bfy',0,\bfeta',\omega)$
with $|\omega| > c |\bfeta' |$ there are two rays associated, namely with
$\eta_n = \pm \sqrt{ c^{-2} \omega^2 - | \bfeta' |^2 }$.
The $+$ sign propagates into $x_n < 0$ for decreasing time,
the $-$ sign points into $x_n >0$. The contributions are well separated
because of the cutoff for tangent rays present in $F_M$. 
Because of assumption (\ref{eq:assume_rays_one-to-one}) and the cutoff 
$\chi$, the contributions with $+$ sign can be ignored. We write 
$S_{\rm a}^{(-)}(0,\cdot) (-\tfrac{\I}{2} B^{-1}c) \tilde{\Xi} T_M^*$ for the Fourier integral
operator that propagates only the singularities from $M$ into the $x_n>0$ 
region for decreasing time.
By a similar reasoning as above, this is a Fourier integral operator 
with canonical relation contained in
\[
  \big\{ ((\bfx,\bfxi), (\bfy'(\bfx,\bfxi,t),t,\bfeta'(\bfx,\bfxi,t),-c(\bfx) |\bfxi|)) \, \big|\ 
    y_n(\bfx,\bfxi,t) = 0 \big\} .
\]
Again this is an invertible canonical relation.

The next step is the composition of the maps $(u_{0,\rm a},u_{0,\rm
  b}) \mapsto f_M$ and $f_M \mapsto (u_{\rmr,\rma}(\cdot,0),
u_{\rmr,\rmb}(\cdot,0))$.  As both maps are Fourier integral operators
with canonical relations that are the graph of an invertible map, the
composition is a (sum of) well defined Fourier integral operators.
The fields $u_{\rm a}$ and $u_{\rmr,\rma}$ are associated with negative
$\omega$, $u_{\rm b}$ and $u_{\rmr,\rmb}$ with positive $\omega$. One can verify
that the ``cross terms'' $u_{0,\rm a} \mapsto
u_{\rmr,\rmb}(\cdot,0)$ and $u_{0,\rmb} \mapsto
u_{\rmr,\rma}(\cdot,0)$ are smoothing operators. The maps $u_{0,\rm a}
\mapsto u_{\rmr,\rma}(\cdot,0)$ and $u_{0,\rm b} \mapsto
u_{\rmr,\rmb}(\cdot,0)$ are pseudodifferential operators. The principal symbol
$p_\rma(0;\bfx,\bfxi)$ is the product of $\Psi_{X_0}$ and another factor.

We proceed under the assumption that $\Psi_M(\bfx',t,\bfxi',t)$ is
supported in the region $0< t < t_1$, with $t_1$ sufficiently small
such that the explicit form of the Fourier integral operator can be
used. This assumption will be lifted at the end of the proof. We treat
only the map $u_{0,\rm a} \mapsto u_{\rmr,\rma}(\cdot,0)$, the map
$u_{0,\rm b} \mapsto u_{\rmr,\rmb}(\cdot,0)$ can be done in a similar
way. The map $u_{0,\rm a} \mapsto f_M$ can then be written in the form
\[
  f_{\rmr,\rma}(\bfy',0) = \frac{1}{(2\pi)^n} 
    \iint a^{\rm(fwd)}(\bfy',t,\bfx,\bfxi) \e^{\I ( \alpha(\bfy',0,t,\bfxi) - \bfx \cdot \bfxi)} 
      u_{0,a}(\bfx) \, d\bfxi \, d \bfx , 
\]
where the amplitude satisfies
\begin{equation} \label{eq:forw_ampl}
  a^{\rm (fwd)}(\bfy',t,\bfx,\bfxi) =
  -2 \I \chi(x_n)
  \omega \sqrt{1 - c(\bfy',0)^2 \omega^{-2} \bfeta'^2} 
 \sqrt{\det(\partial_\bfy\bfx)} \,
  \Psi_M(\bfy,t,\bfeta', \omega) 
  \! \mod S^0(\Real^{2n} \times \Real^n) ,
\end{equation}
where $\omega = \partial_t \alpha = - c(\bfx) | \bfxi |$, $\bfeta =
\partial_\bfy \alpha$ and $\det(\partial_\bfy \bfx)$ is the
Jacobian of the ray flow as explained earlier. The {\em adjoint} of the
map $f_M \mapsto \chi u_{\rmr,\rma}(\cdot, 0)$ is given by
$\Xi^* T_M c  \frac{\I}{2} B^{-1} S_\rma(\cdot,0) \chi$, and is a Fourier 
integral operator with the same phase function 
$\alpha(\bfy',0,t,\bfxi) - \bfx \cdot \bfxi$ and amplitude
\begin{equation} \label{eq:backw_ampl}
  a^{\rm (bkd)}(\bfy',t,\bfz,\bfzeta)
  = \tfrac{\I}{2} \chi(z_n)  (- \omega^{-1}) c(\bfy) 
 \sqrt{\det(\partial_\bfy\bfx(\bfy',t,\bfzeta))} \,
  \Xi
    \! \mod S^{-2}(\Real^{2n} \times \Real^n) .
\end{equation}
The map $f_M \mapsto \chi u_{\rmr,\rma}(\cdot, 0)$ is therefore given by,
with the notation $\bfz$ instead of $\bfx \in \Real^n$,
\[
  u_{\rmr,\rma}(\bfz,0) = \frac{1}{(2\pi)^n}
    \iiint \overline{a^{\rm(bkd)}(\bfy',t,\bfz,\bfzeta)} 
      \e^{\I(- \alpha(\bfy',0,t,\bfzeta) + \bfz \cdot \bfzeta )} f_M(\bfy',t) 
    \, d\bfzeta \, d \bfy' \, dt .
\]
Therefore, the map $u_{0,\rm a} \mapsto \chi(z_n) u_{\rmr,\rma}(\cdot,0)$ has 
distribution kernel $K(\bfz,\bfx)$ given by
\begin{align} \label{eq:RT_sandw_phases_1}
  \frac{1}{(2\pi)^{2n}}
    \iiiint & \overline{a^{\rm (bkd)}(\bfy',t,\bfz,\bfzeta)} 
                a^{\rm (fwd)}(\bfy',t,\bfx,\bfxi) 
    \e^{\I(-\alpha(\bfy',0,t,\bfzeta)+\alpha(\bfy',0,t,\bfxi)+\bfz\cdot  \bfzeta-\bfx\cdot \bfxi)} 
    d\bfy' \, dt \, d\bfxi \, d\bfzeta .
\end{align}
Using a smooth cutoff the $(\bfxi,\bfzeta)$ integration domain can be
divided into three parts, one with $|\bfzeta| \le 2 | \bfxi |$, one
with $| \bfzeta | \ge \frac{4}{3} | \bfxi |$, and a third part
containing $(\bfzeta,\bfxi) = (\mathbf{0},\mathbf{0})$. In the first
part, the method of stationary phase can be applied to the integral
over $(\bfy',t,\bfzeta)$ using $| \bfxi |$ as large parameter.  We
show that there is a function $g(\bfz,\bfx,\bfxi)$ such that
\begin{equation} \label{eq:form_amplitude_result}
 \frac{1}{(2\pi)^n} 
  \iiint \overline{a^{\rm (bkd)}} a^{\rm (fwd)} 
    \e^{\I(- \alpha(\bfy',0,t,\bfzeta)+ \alpha(\bfy',0,t,\bfxi) + \bfz \cdot \bfzeta
    - \bfx \cdot \bfxi)} \, d\bfy' \, dt \, d \bfzeta
  = g(\bfz,\bfx,\bfxi) \e^{\I(\bfz-\bfx) \cdot \bfxi} ,
\end{equation}
and such that $g(\bfz,\bfx,\bfxi)$ is a symbol that has an asymptotic series 
expansion with leading order term satisfying
$g(\bfx,\bfx,\bfxi) = \Psi_{X_0}(\bfx,\bfxi)$. 

The first step in this computation is to determine the 
stationary points of the map
\[
  \Phi: (\bfy',t,\bfzeta) \mapsto 
    -\alpha(\bfy',0,t,\bfzeta)+\alpha(\bfy',0,t,\bfxi)+ \bfz\cdot  \bfzeta- \bfx\cdot \bfxi .
\]
By the properties of $\alpha$, $\pdpd{}{(\bfy',t)} \Phi = 0$
if and only if $\pdpd{\alpha}{(\bfy',t)}(\bfy',0,\bfzeta)
= \pdpd{\alpha}{(\bfy',t)}(\bfy',0,\bfxi)$
if and only if $(\bfy',0,t,\bfzeta)$ and $(\bfy',0,t,\bfxi)$ are associated
with the same bicharacteristic and hence $\bfzeta = \bfxi$.
Requiring that the derivative with respect to $\bfzeta$ is 0 gives that
\[
  - \partial_\bfxi \alpha (\bfy,t,\bfxi) + \bfz = 0 .
\]
Therefore, the bicharacteristic determined by $(\bfz,\bfxi)$ must be the same
as the bicharacteristic determined by $(\bfy',t,\bfzeta)$. Let 
$\psi(\bfy',t,\bfzeta; \bfx,\bfxi)$ be a $C^\infty$ cutoff function
that is one for a small neighborhood of $(\bfy',t,\bfzeta)$ around
the stationary value, and zero outside a slightly larger 
neighborhood. From the lemma of nonstationary phase one can
derive that the contribution to $g$ from the region away from the 
stationary point set is in $S^{-\infty}$.

At this point, observe that the second part, 
with $| \bfzeta | \ge \frac{4}{3} | \bfxi |$,
can be treated similarly, with the role of $\bfzeta$ and $\bfxi$
interchanged. In this case the stationary point set is in the region
where the amplitude is zero, and its contribution is of the form
(\ref{eq:form_amplitude_result}), but with $g$ in 
$S^{-\infty}(\Real^{2n} \times \Real^n)$. 
The third part, $\bfzeta,\bfxi$ around zero, also yields such a contribution 
with $g \in S^{-\infty}(\Real^{2n} \times \Real^n).$

To treat the case $(\bfy',t,\bfzeta)$ around the stationary point set, we apply 
a change of variables in the phase function. Setting $y_n=0$, it can be written as
\begin{align*}
  \alpha(\bfy,t,\bfxi) - \alpha(\bfy,t,\bfzeta)
  = {}&  \int_0^1 \frac{\partial}{\partial s} \alpha(\bfy,t,\bfzeta+s(\bfxi-\bfzeta)) \, ds
  = (\bfxi-\bfzeta) \cdot \int_0^1 
            \frac{\partial \alpha}{\partial \bfxi}(\bfy,t,\bfzeta+s(\bfxi-\bfzeta)) \, ds 
\\
  \stackrel{\rm def}{=}{}&
  (\bfxi - \bfzeta) \cdot X(\bfy',t,\bfzeta,\bfxi) .
\end{align*}
The goal is to rewrite (\ref{eq:RT_sandw_phases_1}) by change of variables 
into
\begin{equation} \label{eq:new_phase_function}
   \frac{1}{(2\pi)^n} 
  \iiint \psi\, \overline{a^{\rm (bkd)}} a^{\rm (fwd)} 
    \e^{\I((\bfxi -\bfzeta)\cdot X + \bfz\cdot \bfzeta - \bfx\cdot \bfxi)}
    \left|\frac{\partial X}{\partial(\bfy',t)}\right|^{-1} \, d\bfzeta \, dX ,
\end{equation}
so the next step is to prove that $\pdpd{X}{(\bfy',t)}$ is an
invertible matrix at the stationary points.  It is clear that, with $\bfxi=\bfzeta$ and $y_n=0$,
\[
  X(\bfy',t,\bfxi,\bfxi)
  = \frac{\partial \alpha}{\partial \bfxi}(\bfy,t,\bfxi)
  = \bfx(\bfy,t,\bfxi) ,
\]
where $(\bfy,t,\bfxi) \mapsto \bfx(\bfy,t,\bfxi)$ was discussed in 
subsection~\ref{subsec_FIO_S}.
The matrix $\pdpd{\bfx}{\bfy}$ is non-degenerate. Then we apply the implicit function theorem to the map $\bfx\mapsto(\bfy',t)$ obtained by setting $\bfy'=\bfy'(\bfx,\tilde t)$ in which $\tilde t$ is such that $y_n(\bfx,\tilde t)=0$, and use that there are no tangent rays, to obtain that the matrix $\pdpd{\bfx}{(\bfy',t)}$ has maximal rank at the stationary points, while the Jacobian satisfies
\[
  \left|\pdpdn{\bfx}{(\bfy',t)}\right|
  = 
  \left|\pdpdn{\bfx}{\bfy}\right| \, \left|\pdpdn{y_n}{t}\right| .
\]

The integral (\ref{eq:new_phase_function}) has a quadratic phase function 
$\bfzeta \cdot (X-\bfz)$, and can be performed as usual in the method of
stationary phase \cite[lemma 1.2.4]{Duistermaat1996}
This shows that $g(\bfz,\bfx,\bfxi)$ satisfies the symbol property.
Using (\ref{eq:forw_ampl}) and (\ref{eq:backw_ampl}) it follows that
\[
  g(\bfx,\bfx,\bfxi)
  =
  -2 \I (-c |\bfxi |) \sqrt{1 - c^2 \omega^{-2} \bfeta'^2} \left|\pdpdn{\bfx}{\bfy}\right|^{\half} \Psi_M
  \;\;
  (-\tfrac{\I}{2}) c(\bfx)  (c| \bfxi |)^{-1} c(\bfy)
  \left|\pdpdn{\bfx}{\bfy}\right|^{\half}
      \left|\pdpdn{\bfx}{\bfy}\right|^{-1} \left|\pdpdn{y_n}{t}\right|^{-1}
\]
Two terms need to be worked out, namely
$\sqrt{1 - c^2 \omega^{-2} \bfeta'^2} = \cos(\theta_M)$, in which $\theta_M$ is the angle of incidence
of a ray at $M$, and $\big|\pdpd{y_n}{t}\big| = c(\bfy) \cos(\theta_M)$.
Therefore indeed we have
\[
  g(\bfx,\bfx,\bfxi) =  \Psi_{X_0} \! \mod S^{-1}(\Real^{2n} \times \Real^n) .
\]
This concludes the proof of the small time result.

Next we extend this to the result for longer times. By a partition
of unity we can write $\Psi_M$ as a sum of terms with
$t \in [s,s+t_1]$ for some $s$. It is sufficient to prove the result
for each term, and we may therefore assume
$t \in [s,s+t_1]$ in the support of $\Psi_M$.
By a change of variable $t$ to $t-s$, it follows that 
\[
  P_{\rm a}(s) \stackrel{\rm def}{=}
  S_{\rm a}^{(-)}(s,\cdot) \left( - \tfrac{\I}{2} B^{-1} c \right)
   T_M^* F_M T_M
   S_{\rm a}(\cdot,s) \in \Op S^0(\Real^n)
\]
with principal symbol
\[
  \Psi_{X_s}(\bfx,\bfxi) = 
    \Psi_M (\bfy'(\bfx,\bfxi,t-s),t,\bfeta'(\bfx,\bfxi,t-s),-c(\bfx) | \bfxi |) ,
\qquad \text{with $t$ s.t. } y_n(\bfx,\bfxi,t-s) = 0 .
\]
From the group property of the $S_{\rm a}(t,s)$ it follows that
\[
  P_{\rm a}(0) = \chi S_{\rm a}(0,s) P_{\rm a}(s) S_{\rm a}(s,0) .
\]
The evolutions operators $S_{\rm a}(0,s)$, $S_{\rm a}(s,0)$ are each others
inverses. According to the Egorov theorem \cite[section 8.1]{Taylor81} the
operator $P_{\rm a}(0)$ is a pseudodifferential operator. For the symbol 
we find that it is given by $(\bfx,\bfxi)\mapsto\Psi_{X_s}(\bfy(\bfx,\bfxi,s),\bfeta(\bfx,\bfxi,s))$,
i.e.\ by $\Psi_{X_0}$. This completes the proof. 
\end{proof}

\section{Inverse scattering}    \label{sec_inverse_scattering}

This section deals with the inverse scattering problem. The diagram in
figure~\ref{RTM_diagram} shows how we theoretically approach RTM. The
forward modeling is given by $r\rightarrow u\rightarrow d$ in the
diagram. The \textsl{reflectivity} function~$r$ causes a scattered
wave field $u$, giving the \textsl{data} $d$ by restriction to the
surface $x_n=0$ (recall $\bfx'=[\bfx]_{1:n-1}$). The bottom line of
the diagram shows the inverse modeling. Data $d$ is propagated in
reverse time to the \textsl{reverse time continued} wave field $u_{\rm
  r}$. This wave field is mapped by the \textsl{imaging operator} $G$
to the \textsl{image} $i$. The \textsl{resolution operator} $R$ is the
map from the reflectivity to the image as result of the forward
modeling and the inversion. The scattering operator $F$ maps the
reflectivity to the continued scattered wave $u_\rmh$. As explained,
this field can be seen as the receiver wave field in an idealized
experiment. It contains all rays that are present in the scattered
wave, regardless whether they can be reconstructed by RTM. The
\textsl{revert operator} $P$ removes parts that are not present in the
receiver wave field. The field $u_\rmh$, central to the analysis, is
not actually computed.

We obtain the main result, the \textsl{imaging condition} (\ref{equ_Gomega_Omega}), in two steps. We propose the \textsl{imaging operator} $G$ and show in theorem \ref{thm_tildeR} and its proof that it is a FIO that maps the \textsl{reverse time continued wave field} to an image of the reflectivity. Hence it is an approximate inverse of the scattering operator. From this operator we subsequently derive an imaging condition in terms of solutions of partial differential equations, $g$ and $u_\rmr$. We first discuss a simplified case with constant coefficient.

Instead of condition (\ref{eq:assume_rays_one-to-one}) we have the
following condition for the RTM based inversion
\begin{equation} \label{eq:assume_bichar_no_return}
  \text{bicharacteristics that enter the region $x_n < 0$
           do not return to the region $x_n \ge 0$}.
\end{equation}
This will ensure that $u_{\rmr}$ is properly defined for the purpose of linearized inversion.  We also recall the assumption that there is no source wave field multipathing, formalized as the property (\ref{equ_SCA_TIC}). The assumption that there are no direct rays from the source to a receivers is incorporated in $P$, i.e.\ by means of $\Psi_M$, cf.\ (\ref{eq:prin_sym_F_M}).

\begin{figure}[ht]
   \begin{displaymath}
      \xymatrix{
           &  r(\bfx) \ar[ldd]_R \ar[d]^{F} \ar[r]  &  u(\bfx,t) \ar[dd]    \\
           &  u_{\rmh}(\bfx,t) \ar[d]^{P} \\
  i(\bfx)  &  u_\rmr(\bfx,t) \ar[l]^G                 &  d(\bfx',t) \ar[l]
      }
   \end{displaymath}
   \caption{Diagram showing the theoretical approach to RTM.}
   \label{RTM_diagram}
\end{figure}
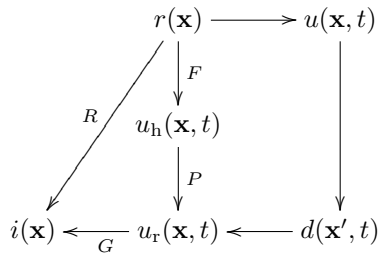

\newcommand{\waux}{w}

\subsection{Constant background velocity} \label{subsec_ISC} 

In this subsection we consider the case of constant background
velocity $c$ with a planar incoming wave, propagating in the positive
$x_3$ direction. The scattered field will be described by
\begin{equation} \label{eq:PDE_refl_field_3D}
  \left[ c^{-2} \partial_t^2 - \Delta \right] u (\bfx,t)
  = A \, \delta(t- c^{-1} x_3 ) r(\bfx) ,
\end{equation}
which is a slight simplification of (\ref{equ_SCA_scatteringproblem}). For simplicity the analysis will be 3-dimensional, but it applies to other dimensions as well.

The solution of the PDE (\ref{eq:PDE_refl_field_3D}) is given in the $(\bfxi,t)$ domain by
\begin{equation} \label{eq:sol_inhom_2_3D}
  \widehat{u}(\bfxi,t)
  = \int_0^t \left( \e^{\I c |\bfxi| (t-s)}
        - \e^{- \I c | \bfxi | (t-s)} \right)
    \frac{c^2}{2 \I c | \bfxi |} \widehat{f}(\bfxi,s) 
    \, ds ,
\end{equation}
where, for now, we denote by $f$ the right hand side of (\ref{eq:PDE_refl_field_3D}). The Fourier transform of $f$ is hence needed. Let $\widetilde{r}(\xi_1,\xi_2,x_3)$ be the Fourier transform of $r$ with respect to $(x_1,x_2)$ but not $x_3$. The Fourier transform of $A \delta( t - \frac{x_3}{c} ) r(\bfx)$ is given by
\begin{equation}
\label{eq:FT_refl_source_3D}
  \int \e^{-\I x_3 \xi_3} A \delta(t - \frac{x_3}{c}) \widetilde{r}(\xi_1,\xi_2,x_3) \, dx_3 
  = c A \e^{-\I \xi_3 c t } \tilde{r}(\xi_1,\xi_2, c t) .
\end{equation}
Next we use (\ref{eq:sol_inhom_2_3D}) and (\ref{eq:FT_refl_source_3D}) to solve
(\ref{eq:PDE_refl_field_3D}), and we make a change of variable $c s = \tilde{z}$. This yields
\[
  \widehat{u}(\bfxi,t)
  = \int_0^{t c} \left( \e^{\I |\bfxi|(c t-\tilde{z})}
        - \e^{- \I |\bfxi|(c t-\tilde{z})} \right)
    \frac{c^2}{2 \I c |\bfxi|} A
        \e^{-\I \xi_3 \tilde{z}} \tilde{r}(\xi_1,\xi_2,\tilde{z}) \, d\tilde{z} .
\]
We can recognize in this formula a Fourier transformation with respect to $\tilde{z}$. However, the Fourier transform of $r$ is not evaluated at $\xi_3$, but at $\xi_3 \pm |\bfxi|$, because $\tilde{z}$ occurs at several places in the complex exponents. Under the assumption that the support of $r$ is contained in $0 < x_3 < c t$ (in other words, that we consider the field at time $t$ such that the incoming wave front has completely passed the support of the reflectivity), the formula equals
\begin{equation} \label{eq:field_from_r_3D}
  \widehat{u}(\bfxi,t) 
  = \e^{\I |\bfxi|ct} 
        \frac{c^2 A}{2 \I c |\bfxi|} 
        \widehat{r}(\bfxi+(0,0,|\bfxi|))
    - \e^{- \I |\bfxi|ct} 
        \frac{c^2 A}{2 \I c |\bfxi|} 
        \widehat{r}(\bfxi-(0,0,|\bfxi|))
\end{equation}
The field in position coordinates is given by the inverse Fourier
transform of this, i.e.\ by
\begin{multline} \label{eq:u_t_x_z_3D}
  u(\bfx,t) 
  = \frac{1}{(2\pi)^3} \int_{\Real^3} \left[
    \e^{\I |\bfxi|ct} 
        \frac{c^2 A}{2 \I c |\bfxi|} 
        \widehat{r}(\bfxi+(0,0,|\bfxi|))
    - \e^{- \I |\bfxi|ct} 
        \frac{c^2 A}{2 \I c |\bfxi|} 
        \widehat{r}(\bfxi-(0,0,|\bfxi|)) \right]
    \e^{\I \bfx \cdot \bfxi}
    \, d \bfxi
\end{multline}
The two terms yield complex conjugate contributions after integration.
To see this, change the integration variables in the second term to 
$-\bfxi$, and use that the property that $r(\bfx)$ is real for 
all $\bfx$ is equivalent to
$\widehat{r}(\bfxi) = \overline{\widehat{r}(-\bfxi)}$
for all $\bfxi$. Therefore
\begin{equation} \label{eq:short_u_t_x_z}
  u(\bfx,t) = \frac{1}{(2\pi)^3} \operatorname{Re} \int_{\Real^3}
    \e^{- \I |\bfxi|ct +\I \bfx\cdot\bfxi}
        \frac{\I c A}{|\bfxi|} 
        \widehat{r}(\bfxi-(0,0,|\bfxi|)) \, d\bfxi .
\end{equation}

There are three wave vectors in (\ref{eq:short_u_t_x_z}), $\bfxi$ is the wave vector of the outgoing reflected wave, $(0,0,|\bfxi|)$ can be interpreted as the wave vector of the incoming wave, while $\bfxi - (0,0,|\bfxi|)$ can be interpreted as the reflectivity wavenumber, which, for a conormal singularity for example, would be normal to the reflector. 

In this simplified analysis we assume that the reverse time continued receiver field $u_{\rm r}$ satisfies a homogeneous wave equation with equal final values (after the scattering) as $u$, like $u_\rmh$ in (\ref{equ_SCA_continuedscattered}), i.e.\ it results from an idealized experiment as explained in section~\ref{subsec_continued_scat_wave}. This means that $u_{\rm r}$ is also given by (\ref{eq:short_u_t_x_z}), except that this formula is now valid for all $t$. 

The basic idea of imaging is to time-correlate the source field with the
receiver field. Approximating the source field by $A \delta(t-x_3/c)$
this becomes evaluating the receiver field at the arrival time of the
incoming wave and multiplication by $A$. Hence, a first guess for the
image would be $I_0 = A u(\bfx,x_3/c)$. This, however will not yield an 
inverse. Using some advance knowledge we will define instead as our 
image
\begin{equation} \label{eq:recon_form}
  I(\bfx) = \frac{2}{c^2 A} (\partial_t + c \partial_{x_3}) u (\bfx,x_3/c) .
\end{equation}
We have from (\ref{eq:short_u_t_x_z})
\begin{equation}
  \frac{2}{c^2 A} (\partial_t+c \partial_{x_3}) u(\bfx,t) 
  = \frac{2}{(2\pi)^3} \operatorname{Re} \int_{\Real^3}
    \left(1 - \frac{\xi_3}{|\bfxi|} \right)
    \e^{-\I |\bfxi|ct + \I \bfx\cdot\bfxi}
     \, \widehat{r}(\bfxi-(0,0,|\bfxi|)) \, d\bfxi .
\end{equation}
Setting $t=x_3/c$ we find
\begin{equation} \label{eq:recon_2_3D}
  I(\bfx)
  = \frac{2}{(2\pi)^3} \operatorname{Re} \int_{\Real^3}
    \left(1 -  \frac{\xi_3}{|\bfxi|} \right)
    \e^{\I \bfx \cdot (\bfxi-(0,0,|\bfxi|)}
    \, \widehat{r}(\bfxi-(0,0,|\bfxi|)) \, d\bfxi  .
\end{equation}
We carry out a coordinate transformation,
\begin{equation} \label{eq:simpliof_transform}
  \tilde{\bfxi} = \bfxi - (0,0,|\bfxi|), 
\qquad\qquad
  \left| \frac{\partial \tilde{\bfxi}}{\partial\bfxi} \right|
  = 1 - \frac{\xi_3}{|\bfxi|} .
\end{equation}
The image of this transformation is the halfplane $\tilde{\xi}_3 < 0$,
while the Jacobian is as given in (\ref{eq:simpliof_transform}), and
exactly equals the factor $1 - \frac{\xi_3}{|\bfxi|}$ from
the derivative operator $\partial_t + c \partial_{x_3}$.
Therefore by a change of variables (\ref{eq:recon_2_3D}) equals
$\frac{1}{(2\pi)^3} \operatorname{Re} \int_{\tilde{\xi}_3<0}
    \e^{\I\bfx \cdot \tilde{\bfxi}} \widehat{r}(\tilde{\bfxi}) \, d\tilde{\bfxi}$.
This can be rewritten  as
\begin{equation}
  I(\bfx) 
  = \frac{1}{(2\pi)^3} \int_{\tilde{\xi}_3 \neq 0}
    \e^{\I\bfx \cdot \tilde{\bfxi} } \, \widehat{r}(\tilde{\bfxi}) \, d \tilde{\bfxi} .
\end{equation}
The right-hand side is almost the inverse Fourier transform, except for the exclusion of the set $\tilde{\xi}_3 = 0$ from the integration domain. This expresses the difficulty with inverting from direct waves. This simple calculation gives the motivation for the imaging condition (\ref{equ_Gomega_Omega}) below, in particular, for the term involving the gradient $\partial_\bfx \widehat{u}_\rmr(\bfx,\omega)$.

\subsection{Imaging condition}  \label{subsec_IC}
We present the main result of the paper. The \textsl{imaging condition} yields a mapping of the source wave $g(\bfx,t)$ and the reverse time continued wave $u_{\rm r}(\bfx,t)$ to an image $i(\bfx)$ of the reflectivity. 
We will show that the following imaging condition yields a partial inverse,
\begin{equation}    \label{equ_Gomega_Omega}
    i(\bfx) = \frac{1}{2\pi} \int \frac{\Omega(\omega)}
                {\I\omega|\widehat{g}(\bfx,\omega)|^2} 
    \left(\overline{\widehat{g}(\bfx,\omega)}\widehat{u}_{\rm r}(\bfx,\omega) 
    - \frac{c(\bfx)^2}{\omega^2}\, \partial_\bfx\overline{\widehat{g}(\bfx,\omega)}\cdot\partial_\bfx\widehat{u}_{\rm r}(\bfx,\omega)
    \right) \, d\omega.
\end{equation}
Here $\Omega(\omega)$ is a smooth function, valued 0 on a bounded neighborhood of the origin, and 1 outside a slightly larger neighborhood. These neighborhoods are obtained in the proof of the theorem.

To characterize $i(\bfx)$, the relation (\ref{equ_SCA_coivar}) between $\bfzeta$ and $\bfxi$ is important. We observe that the inverse function $\bfxi(\bfzeta)$ of (\ref{equ_SCA_coivar}) is defined on the halfspace 
\begin{equation} \label{eq:halfspace_zeta}
  \{ \bfzeta \in \Real ^n \backslash 0 \, |\, \bfzeta \cdot \bfns(\bfz) < 0 \} .
\end{equation}
The function $p_{\rm a}(T_{\rm s}(\bfz);\bfz,\bfxi(\bfzeta))$, $p_\rma$ the principal symbol of the revert operator, is in principle defined only on (\ref{eq:halfspace_zeta}). However, due to the DSE, it is zero for $\bfzeta$ near the boundary of this halfspace and we will consider it as a function on $\Real^n \backslash 0$ that is zero outside (\ref{eq:halfspace_zeta}). With this definition, the function $(\bfz,\bfzeta) \mapsto p_{\rm a}(T_{\rm s}(\bfz);\bfz,\bfxi(\bfzeta))$ is an order 0 symbol.

\begin{thm} \label{thm_R}
Let image $i(\bfx)$ be defined by (\ref{equ_Gomega_Omega}), and assume
(\ref{eq:assume_SME}), (\ref{eq:assume_supp_r}) and (\ref{eq:assume_bichar_no_return}). Define operator $R$ by the map from the reflectivity $r$ to the image, $R r(\bfx)=i(\bfx)$. Then $R$ is a pseudodifferential operator of order zero, and its principal symbol satisfies
\begin{equation} \label{eq:prin_symb_R}
  \text{p.s.}(R)(\bfz,\bfzeta)
  = p_{\rm a}(T_{\rm s}(\bfz);\bfz,\bfxi(\bfzeta))
    + p_{\rm a}(T_{\rm s}(\bfz);\bfz,\bfxi(-\bfzeta)) ,
\end{equation}
where the map $(\bfz,\bfzeta) \mapsto p_{\rm a}(T_{\rm s}(\bfz);\bfz,\bfxi(\bfzeta))$ is as just described.
\end{thm}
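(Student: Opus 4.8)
The strategy is to realize $R$ as a composition of Fourier integral operators whose canonical relations are all graphs of invertible maps, conclude that the composition is a pseudodifferential operator by the FIO calculus, and then track the principal symbol through the composition. Concretely, the imaging condition (\ref{equ_Gomega_Omega}) defines the imaging operator $G$ acting on $u_\rmr$; by the diagram in figure~\ref{RTM_diagram} one has $R = G\circ P\circ F$ (with $F=F_\rma+F_\rmb$, and with the understanding that the direct-wave cutoffs are already present). I would first show that $G$ is itself a Fourier integral operator. For this, insert the local oscillatory-integral representation of $u_\rmr$ coming from Theorem~\ref{thm_P} and Theorem~\ref{thm_Fa} (so $u_\rmr = P(F_\rma+F_\rmb)r$ is a superposition of FIOs with the canonical relation $\Lambda$ of (\ref{equ_SCA_canrel_global}), composed with the pseudodifferential symbol $p_\rma$), substitute this into (\ref{equ_Gomega_Omega}), and carry out the $\omega$-integration together with a stationary-phase argument in the remaining phase variables. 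The key point is that the source field $\widehat g(\bfx,\omega)$ entering the denominator and in the gradient term is, by (\ref{equ_SCA_hot_source})--(\ref{equ_SCA_swd_vector}), of the form $A_\rms(\bfx)\,(\I\omega)^{(n-3)/2}\,\e^{-\I\omega T_\rms(\bfx)}$ up to lower order, so that $\overline{\widehat g}\,\widehat u_\rmr$ and $(c^2/\omega^2)\partial_\bfx\overline{\widehat g}\cdot\partial_\bfx\widehat u_\rmr$ combine to shift the phase of $u_\rmr$ by $+\omega T_\rms(\bfx)$ and to supply the scalar factor $1-\bfxi/|\bfxi|\cdot\bfns(\bfx)$ from the gradient contraction (exactly as in the constant-velocity computation of subsection~\ref{subsec_ISC}, compare (\ref{eq:simpliof_transform})).

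\textbf{Key steps, in order.} (i) Write $i(\bfx)=Rr(\bfx)$ explicitly as an iterated oscillatory integral by substituting the FIO representation of $u_\rmr$ into (\ref{equ_Gomega_Omega}); the phase acquires the term $\omega(t-T_\rms(\bfx))$ evaluated along bicharacteristics, and because $u_\rmr$ is evaluated at the same spatial point $\bfx$ as $g$, the canonical relation of $G$ will be (a piece of) the inverse of $\Lambda$. (ii) Perform the $\omega$-integral and the integral over the bicharacteristic parameters by stationary phase; the stationary-phase conditions force $t=T_\rms(\bfx)$ and the outgoing covariable to match, so the composition $G\circ P\circ F$ has canonical relation the \emph{diagonal} — i.e.\ $R$ is pseudodifferential. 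Here the Bolker-type injectivity of $\Lambda$ (the injectivity of the projection to outgoing variables proved in Theorem~\ref{thm_Fa}) is what guarantees the composition $G\circ(\text{FIO})$ is clean/transversal and produces no extra wavefront. (iii) Assemble the principal symbol: the amplitude $\mathrm{A_F}$ of (\ref{equ_SCA_phaseamplitude}) supplies, via $\mathrm{p.s.}(S_\rma)=\sqrt{\det(\partial_\bfy\bfx)}$ and (\ref{equ_IVPFIO_MOC_amplitude}), a factor that cancels against the Jacobian $|\partial_\bfxi\bfzeta|=1-\bfxi/|\bfxi|\cdot\bfns(\bfx)$ from the change of variables $\bfxi\mapsto\bfzeta$ (cf.\ (\ref{equ_SCA_dzetadxi})) and against the $2A_\rms(\bfx)$ and $(\I\omega)^{(n+1)/2}$ factors, after division by $\I\omega|\widehat g|^2$ and the gradient correction $(1-\xi_3/|\xi|$-type factor); what survives is precisely $p_\rma(T_\rms(\bfz);\bfz,\bfxi(\bfzeta))$ from the revert operator. (iv) The term $p_\rma(T_\rms(\bfz);\bfz,\bfxi(-\bfzeta))$ arises from the $F_\rmb$ contribution, equivalently from the $\operatorname{Re}$ / complex-conjugate symmetry $u_{\rmh,\rmb}=\overline{u_{\rmh,\rma}}$ and $\pi_\rmb(\bfx,\bfxi)=\pi_\rma(\bfx,-\bfxi)$: the positive-frequency part contributes the reflected-covariable $\bfzeta'=\bfxi+|\bfxi|\bfns$, i.e.\ replaces $\bfzeta$ by $-\bfzeta$ in the symbol. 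Summing the two gives (\ref{eq:prin_symb_R}).

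\textbf{Expected main obstacle.} The routine bookkeeping is the amplitude cancellation in step (iii): one must keep careful track of the powers of $\omega$ (the $(\I\omega)^{(n+1)/2}$ in $\mathrm{A_F}$, the $(\I\omega)^{(n-3)/2}$ in $g$ twice in $|\widehat g|^2$, the $1/(\I\omega)$ prefactor, the $c^2/\omega^2$ in the gradient term), of the factors of $c(\bfx)$, and of the two half-powers of $\det(\partial_\bfy\bfx)$ — one from the amplitude and one from the stationary-phase Hessian — so that they combine to give exactly the dimensionless symbol $p_\rma$. The genuinely substantive point, rather than bookkeeping, is justifying that the stationary-phase reduction is valid globally after the partition (\ref{equ_SCA_global}): one needs that distinct local scattering events do not produce spurious stationary points when composed with $G$, and that the region $\bfzeta$ near $\partial(\ref{eq:halfspace_zeta})$ — where $p_\rma$ and the cutoffs vanish and where $\widehat g$ could be divided by small quantities — contributes only to lower order. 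This is exactly where the DSE (which makes $1-\bfxi/|\bfxi|\cdot\bfns$ bounded away from $0$ on the relevant set, cf.\ Lemma~\ref{lem_aroundscatpoint}) and the SME property (\ref{equ_SCA_TIC}) are needed; the cutoff $\Omega(\omega)$ near $\omega=0$ and the implicit restriction of $\bfzeta$ to a neighborhood of (\ref{eq:halfspace_zeta}) are then the ``neighborhoods obtained in the proof'' alluded to after (\ref{equ_Gomega_Omega}).
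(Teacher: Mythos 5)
Your overall strategy --- realize $R$ as a composition of FIOs with graph canonical relations, use the SME/DSE arguments to collapse the composed relation to the diagonal, and track the cancellation between the gradient term of the imaging condition and the Jacobian of the covariable map $\bfxi\mapsto\bfzeta$ --- is the same as the paper's. The paper, however, factors the work through the intermediate Theorem~\ref{thm_tildeR}: there the imaging operator is built from the geometrical-optics quantities $T_\rms$, $A_\rms$, $\bfns$ directly, and essentially all of your steps (i)--(iv) are carried out for that operator (including the identity $\mathrm{A_{HPF}}=|\partial_\bfxi\bftheta|\,p_\rma$ on the diagonal and the conjugate symmetry giving the $\bfxi(-\bfzeta)$ term). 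What then remains for Theorem~\ref{thm_R} proper is precisely the step you compress into the phrase ``up to lower order'': showing that replacing $T_\rms$, $A_\rms$, $\bfns$ by expressions in the \emph{actual} source field $\widehat g$ --- dividing by $\I\omega|\widehat g|^2$ and contracting with $\partial_\bfx\overline{\widehat g}$ --- changes the operator only by terms of order $-1$.

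This is where your proposal has a genuine gap. The identities (\ref{equ_naive_elimination}) hold exactly only for the principal term $A_0=A_\rms(\I\omega)^{(n-3)/2}$ of the source expansion; to divide by the full $\widehat g$ one needs (a) a positive lower bound $|A_0|\ge L>0$ on compact sets, which the paper extracts from the transport equation together with the absence of source caustics, and (b) the estimate $|A/A_0-1|\le C/(1+|\omega|)$ coming from (\ref{equ_SCA_asymptotic_sum}), which makes $1/A-1/A_0$ a symbol of one order lower and dictates how wide the zero set of $\Omega$ must be (so that on $\supp(\Omega)$ the frequency is large enough that $A\neq0$ and $\tilde\sigma\sigma=1$ --- these are the ``neighborhoods obtained in the proof''). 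You do flag ``division by small quantities'', but you locate the danger at the boundary of the halfspace (\ref{eq:halfspace_zeta}) where $p_\rma$ and the cutoffs vanish; that region is controlled by the DSE and concerns the covariable transformation, not $\widehat g$. The potential smallness of $\widehat g$ is an amplitude issue governed by the no-caustics assumption, and without points (a) and (b) the claim that the $g$-based condition (\ref{equ_Gomega_Omega}) has the same principal symbol as the idealized one is unproven. The rest of your outline is sound and coincides with the paper's proof of Theorem~\ref{thm_tildeR}.
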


Operator $R$ will be referred to as the \textsl{resolution operator}. From the proof of the result it can be seen that the first contribution on the right-hand side of (\ref{eq:prin_symb_R}) corresponds to the negative frequencies and the second contribution to the positive frequencies. As the supports, i.e.\ (\ref{eq:halfspace_zeta}) for the first, of these two terms are disjoint, (\ref{eq:prin_symb_R}) defines a symbol that is one on a subset of $\Real^n \times \Real^n \backslash 0$. Hence, the map $d \mapsto i$ given by (\ref{equ_Gomega_Omega}) can rightfully be called a partial inverse.

The imaging condition (\ref{equ_Gomega_Omega}) is based on the actual source field $g$. Before proving theorem~\ref{thm_R}, we derive an intermediate result with an imaging condition based on the source wave traveltime $T_{\rm s}(\bfx)$, and the highest order contribution to the amplitude $A_{\rm s}(\bfx)$. Let $\waux\in\mathcal{E}'(Y\tines\Real)$ be an auxiliary distribution. Let operators $H$ and $K$ be defined by
\begin{equation}
\begin{split} \label{equ_ISV_G}
   H\waux(\bfy,t)   &= \frac{1}{A_{\rm s}(\bfy)}\partial_t^{-\!\frac{n+1}{2}} [\partial_t+c(\bfy)\bfns(\bfy)\cdot\partial_\bfy]\waux(\bfy,t), \\
   K\waux(\bfz)     &= \waux(\bfz,T_{\rm s}(\bfz)).
       \end{split}
\end{equation}
Operator $K$ is a restriction to a hypersurface in $\Real^{n+1}$. Operator $H$ is a pseudodifferential operator. Operator $\partial_t^{-\!\frac{n+1}{2}}$ is to be read as the pseudodifferential operator with symbol $\omega\mapsto\tilde\sigma(\omega)(\I\omega)^{-\!\frac{n+1}{2}}$ in which $\tilde\sigma$ is a smooth function, valued 1 except for the origin where it is 0.
Because $P$ and $F$ are defined as matrix operators, we define $V_1 = \begin{pmatrix} 1 & 0 \end{pmatrix}$ which projects out the first component of a two-vector. We define the \textsl{imaging operator} \mbox{$G=KH$}. 
\begin{thm} \label{thm_tildeR}
If  (\ref{eq:assume_SME}), (\ref{eq:assume_supp_r}) and (\ref{eq:assume_bichar_no_return}) are satisfied and $\widetilde{R}$ is given by
\begin{equation} \label{equ_ISV_tildeR}
  \widetilde{R}r = Gu_\rmr = G V_1 P F r ,
\end{equation}
then $\widetilde{R}$ is a pseudodifferential operator of order zero with principal symbol given by (\ref{eq:prin_symb_R}). 
\end{thm}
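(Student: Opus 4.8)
The plan is to write $\widetilde R = K\,H\,V_1\,P\,F$ as a composition of operators whose microlocal nature is already established, deduce that it is a pseudodifferential operator by computing the composed canonical relation, and then read off its principal symbol by tracking amplitudes through the composition. Using the matrix form (\ref{equ_RTM_operatorP}) of $P$, the fact that $R_1,R_2$ are regularizing, and that $V_1$ projects onto the first component, one gets $V_1PFr = c\,(P_\rma u_{\rmh,\rma}+P_\rmb u_{\rmh,\rmb})$ modulo a smooth error, where $u_{\rmh,\rma}=F_\rma r$ and, by the symmetry relations, $u_{\rmh,\rmb}=\overline{u_{\rmh,\rma}}$, $P_\rmb=\overline{P_\rma}$. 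Hence it suffices to analyze the negative-frequency summand $G(cP_\rma u_{\rmh,\rma}) = KH(cP_\rma F_\rma r)$ and add its conjugate. By Theorem~\ref{thm_localF}, $u_{\rmh,\rma}$ is locally the oscillatory integral (\ref{equ_SCA_Fa})--(\ref{equ_SCA_phaseamplitude}); since $P_\rma$ is pseudodifferential with symbol supported away from source rays (the DSE, which also supplies the cutoff needed in Theorem~\ref{thm_Fa}), $cP_\rma F_\rma$ is a FIO with canonical relation contained in $\Lambda$ of (\ref{equ_SCA_canrel_global}), and the pseudodifferential operator $H$ does not change this. The only new ingredient is $K$, restriction to the hypersurface $\{\,t=T_\rms(\bfz)\,\}$, a FIO whose canonical relation sends $(\bfz,T_\rms(\bfz),\bfeta,\omega)$ to $(\bfz,\bfeta+\omega\,\partial_\bfz T_\rms(\bfz))$ on the set of covectors not conormal to that hypersurface.

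Next I would verify that $K\circ(HcP_\rma F_\rma)$ is a transversal composition and identify the resulting canonical relation. Transversality requires that $\Lambda$, projected to $\cotbun{(Y\tines\Real)}$, contains no covector conormal to $\{\,t=T_\rms(\bfz)\,\}$; such covectors satisfy $\bfeta=-\omega\,\partial_\bfz T_\rms(\bfz)$, which together with the constraint $\omega=-c(\bfy)|\bfeta|$ present in $\Lambda$ and $|\bfns|=1$ forces $\bfeta\parallel\bfns(\bfy)$ at $t=T_\rms(\bfy)$, i.e.\ a direct source ray --- excluded by the DSE. Hence $\widetilde R$ is a FIO. On its canonical relation one has $\bfy=\bfz$, $t=T_\rms(\bfz)$ and $(\bfx,\bfxi)=\Phi_{T_\rms(\bfx)-t}(\bfy,\bfeta)$, so $(\bfz,\bfeta)$ and $(\bfx,\bfxi)$ lie on a common bicharacteristic whose flow time equals $|T_\rms(\bfz)-T_\rms(\bfx)|$; by property (\ref{equ_SCA_TIC}) (the SME), together with the exclusion of source rays (DSE), this forces $\bfx=\bfz$. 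But then the flow time vanishes, so $\bfeta=\bfxi$, $\omega=-c(\bfz)|\bfxi|$, and the covariable produced by $K$ is $\bfeta+\omega\,\partial_\bfz T_\rms(\bfz)=\bfxi-|\bfxi|\bfns(\bfz)=\bfzeta$. Thus the canonical relation of the negative-frequency part of $\widetilde R$ is the diagonal of $\cotbun{X}\tines\cotbun{X}$ restricted to the halfspace (\ref{eq:halfspace_zeta}), so $\widetilde R$ is a pseudodifferential operator.

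For the principal symbol I would evaluate the amplitude of the composition on the diagonal, where in addition the flow time is zero, hence $\partial_\bfy\bfx(\bfz,0;\bfxi)={\rm I_n}$, $\det\partial_\bfy\bfx=1$, $\bfeta=\bfxi$ and $\omega=\partial_t\alpha(\bfz,0;\bfxi)=-c(\bfz)|\bfxi|$. Starting from $\mathrm{A_F}$ in (\ref{equ_SCA_phaseamplitude}) with $a$ from (\ref{equ_IVPFIO_MOC_amplitude}), multiplying by the symbol $c(\bfz)\,p_\rma(t;\bfz,\bfeta)$ contributed by $cP_\rma$, and then by the symbol of $H=\tfrac1{A_\rms}\partial_t^{-(n+1)/2}[\partial_t+c\bfns\cdot\partial_\bfy]$: the factor $(\i\partial_t\alpha)^{(n+1)/2}$ in $\mathrm{A_F}$ cancels the factor $(\i\omega)^{-(n+1)/2}$ from $\partial_t^{-(n+1)/2}$; the $2A_\rms$ in $\mathrm{A_F}$, the $\tfrac12$ inside $a$ and the $\tfrac1{A_\rms}$ in $H$ combine to $1$; and $[\partial_t+c\bfns\cdot\partial_\bfy]$ contributes $\i(\omega+c(\bfz)\bfns(\bfz)\cdot\bfxi)=-\i c(\bfz)|\bfxi|\,(1-\tfrac{\bfxi}{|\bfxi|}\cdot\bfns(\bfz))$, so that after cancelling the remaining powers of $c$ and $|\bfxi|$ one is left with $p_\rma\cdot(1-\tfrac{\bfxi}{|\bfxi|}\cdot\bfns(\bfz))=p_\rma\cdot|\partial_\bfxi\bfzeta|$, cf.\ (\ref{equ_SCA_dzetadxi}). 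Finally, bringing the restricted oscillatory integral into standard pseudodifferential form --- by the change of integration variable $\bfxi\mapsto\bfzeta$ exactly as in the proof of Theorem~\ref{thm_P} (the phase $\phiT(\bfz,T_\rms(\bfz),\bfx,\bfxi)$ vanishes on the diagonal and satisfies $-\partial_\bfx\phiT|_{\bfx=\bfz}=\bfzeta$) --- contributes the Jacobian $|\partial_\bfxi\bfzeta|^{-1}$, which cancels the obliquity factor and leaves exactly $p_\rma(T_\rms(\bfz);\bfz,\bfxi(\bfzeta))$, a symbol of order $0$. The conjugate summand $cP_\rmb u_{\rmh,\rmb}$, treated identically but with the opposite phase (hence with $\bfns$ replaced by $-\bfns$), yields the term $p_\rma(T_\rms(\bfz);\bfz,\bfxi(-\bfzeta))$; since the supports of the two terms --- the two halves of (\ref{eq:halfspace_zeta}) --- are disjoint, their sum is the symbol (\ref{eq:prin_symb_R}). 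The partition $\{\rho_i\}$ only organizes the local representations, and $\sum_i\rho_i=1$ guarantees that the local contributions reassemble to this symbol.

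The hard part will be the amplitude bookkeeping of the last step: treating $K$ as a FIO --- exhibiting the change of variables that brings the restricted oscillatory integral into standard pseudodifferential form and produces exactly the obliquity Jacobian $|\partial_\bfxi\bfzeta|^{-1}$ that the derivative operator in $H$ was designed to cancel --- and keeping track of every amplitude factor, in particular the observation that at the stationary point the ray-flow Jacobian $\det\partial_\bfy\bfx$ is trivial. A secondary technical point is justifying use of the local representation (\ref{equ_SCA_Fa}) of $u_{\rmh,\rma}$ at times inside, not only at the right end of, a scattering interval, and the assembly of the global operator; both go as in the proofs of Theorems~\ref{thm_Fa} and~\ref{thm_P}, using that $S_\rma(t)$ is a FIO for $t$ of either sign built on the same bicharacteristic flow, with Egorov's theorem handling the longer-time reduction.
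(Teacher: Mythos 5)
Your proposal follows essentially the same route as the paper's proof: reduce to the negative-frequency summand $KHcP_\rma F_\rma$, compose the canonical relation of the restriction $K$ with that of the scattering operator and use the SME together with the DSE to force $\bfx=\bfz$ (so the relation is contained in the identity), evaluate the composed amplitude on the diagonal where the ray-flow Jacobian is trivial, perform the Kuranishi-type change of covariable $\bfxi\mapsto\bfzeta$ whose Jacobian cancels the obliquity factor produced by $H$, and add the conjugate positive-frequency term on the disjoint half-space. The steps you flag as remaining work (the off-diagonal change of variables with the averaged direction vector and the associated injectivity condition, and the assembly of local representations) are exactly the technical content the paper supplies, so the outline is correct and matches the paper's argument.
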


\begin{proof}
We first work out the details for the negative frequencies, leading to a characterization of $\widetilde{R}_\rma=KHcP_\rma F_\rma$. We then consider the positive, and add the contributions, $\widetilde{R}=\widetilde{R}_\rma+\widetilde{R}_\rmb$.

(i) We show that the composition $\widetilde{R}_\rma=KHcP_\rma F_\rma$ is a FIO and that it is microlocal, i.e.\ has canonical relation that is a subset of the identity. The kernel of operator $K$ is an oscillatory integral,
\begin{equation}
   K\waux(\bfz) = (2\pi)^{-n-1}\iint
                              \e^{\I\bfeta\cdot(\bfz-\bfy)+\I\omega(T_{\rm s}(\bfz)-t)}\, \waux(\bfy,t)\, d(\bfy,t) \, d\bfeta \, d\omega
\end{equation}
with canonical relation
\begin{multline}  \label{equ_ISC_canrel_K}
   \Upsilon = \big\{ ((\bfz,\bftheta),(\bfy,t,\bfeta,\omega))\ \big|\
                     (\bfy,\bfeta)\in\cotbun{Y},\ t=T_{\rm s}(\bfy),\ \omega\in\Real\!\setminus\!\{0\},\ \\
                     \bfz=\bfy,\ \bftheta = \bfeta+c(\bfy)^{-1}\omega\,\bfns(\bfy)\, \big\} .
\end{multline}
First consider $K\pi_\rma F_\rma$, which is the composition of $K$ and $\pi_\rma F_\rma$ with canonical relations given respectively by $\Upsilon$ (\ref{equ_ISC_canrel_K}) and $\Lambda$ (\ref{equ_SCA_canrel_global}). We consider the composition of the Fourier integrals $K$ and $\pi_\rma F_\rma$, using the composition theorem based on the canonical relations, see \cite[Theorem 2.4.1]{Duistermaat1996} or \cite{Treves80v2}.
Let $((\bfz,\bftheta),(\bfx,\bfzeta))\in\Upsilon\circ\Lambda$ then there exist a \mbox{$(\bfy,\bfeta)\in\cotbun{Y}$} that is not in $V_{\rms,t}$, time $t=T_{\rm s}(\bfy)$ and $\omega=-c(\bfy)|\bfeta|$ such that \mbox{$((\bfz,\bftheta),(\bfy,t,\bfeta,\omega))\in\Upsilon$} and \mbox{$((\bfy,t,\bfeta,\omega),(\bfx,\bfzeta))\in\Lambda$}. As a result one has $(\bfx,\bfxi)=\Phi_{T_{\rm s}(\bfx)-T_{\rm s}(\bfy)}(\bfy,\bfeta)$, which means that $\bfx$ and $\bfy$ are on the same ray and separated in time by $T_{\rm s}(\bfy)-T_{\rm s}(\bfx)$. Condition (\ref{equ_SCA_TIC}) (SME) now implies that this ray must coincide with a source ray. As source rays are excluded, i.e.\ $(\bfy,\bfeta)\notin V_{\rms,t}$, the only possibility is that $\bfx=\bfy$. The conclusion is that $(\bfz,\bftheta)=(\bfx,\bfzeta)$.

It is straightforward to establish that the composition of canonical relations is transversal, and that the additional conditions of the composition theorem of FIOs are satisfied. Hence $K \pi_\rma F_\rma$ is a FIO with canonical relation contained in the identity. The operators $H$ and $P_\rma$ are pseudo\-differential operators, and $\pi_\rma$ and $P_\rma$ can be constructed such that \mbox{$\wfs(P_\rma w)\subset\wfs(\pi_\rma w)$} for all~$w$. The conclusion is that $\widetilde{R}_\rma=KHcP_\rma F_\rma$ is a FIO with identity canonical relation, and hence a pseudodifferential operator.

\smallskip

(ii) We show that $\widetilde{R}=KHV_1PF$ is a pseudodifferential operator that can be written as the integral (\ref{equ_tildeR_integral}) below. For $F$ we use the local expressions (\ref{equ_SCA_Fa}).
Because $P$ is a $t$-family of pseudodifferential operators and $F_{\rma}\rho$ is a $t$-family of FIOs, the composition $P_\rma(t)F_\rma(t)\rho$ is a FIO with phase inherited from $F_\rma(t)\rho$, i.e.\ $\phiT$. The highest order contribution to its amplitude is $p_\rma(t;\bfy,\partial_\bfy{\phiT})\mathrm{A_F}$. The composition with $H$ can be done similarly, because $P_\rma(t)F_\rma(t)\rho$ can also be viewed as a FIOs with output variables $(\bfy,t)$. In this proof we will denote the highest order contribution to the amplitude of $H c P_\rma(t)F_\rma(t)\rho$ by $\mathrm{A_{HPF}}(\bfy,t_1,\bfx,\bfxi)$. It can be written in the form
\begin{equation}  \label{equ_ISV_ampHF}
   \mathrm{A_{HPF}}(\bfy,t_1,\bfx,\bfxi) =
   \left(1+\frac{c(\bfy)\bfns(\bfy)\cdot\partial_\bfy{\alpha}}{\partial_t{\alpha}}\right) \, 
   \frac{2 \I c(\bfy) p_\rma(t_1;\bfy,\partial_\bfy{\alpha}) {a} A_{\rm s}(\bfx) \partial_t{\alpha}}{c(\bfx) A_{\rm s}(\bfy)}.
\end{equation}
For all occurences of ${\alpha}$ and ${a}$ the arguments are $(\bfy,t_1-T_{\rm s}(\bfx);\bfxi)$.

Next we consider the applicaton of the restriction operator $K$. We have already argued that $\widetilde{R}_\rma $ is a FIO with canonical relation contained in the identity. This implies that, to prove the theorem, it is sufficient to do a local analysis using (\ref{equ_SCA_Fa}). The local analysis shows again that $\widetilde{R}_\rma $ is a pseudodifferential operator, but also gives the required explicit formula for the amplitude. 

The local phase function of $K H c P_\rma(t)F_\rma(t)\rho$ will be denoted by $\psi(\bfz,\bfx,\bfxi)$. Applying $K$ to $\phiT$, i.e.\ setting $t=T_\rms(\bfz)$, yields
\begin{equation}  \label{equ_ISV_psi}
   \psi(\bfz,\bfx,\bfxi) = {\alpha}(\bfz,T_{\rm s}(\bfz)-T_{\rm s}(\bfx);\bfxi) - \bfxi\cdot\bfx.
\end{equation}
The stationary point set of $\psi$, denoted by $\Psi$, is given by the triplets $(\bfz,\bfx,\bfxi)$ that solve
\begin{equation}  \label{equ_ISV_statsetpsi}
   \partial_\bfxi{\alpha}(\bfz,T_{\rm s}(\bfz)-T_{\rm s}(\bfx);\bfxi) = \bfx.
\end{equation}
The interpretation of $(\bfz,\bfx,\bfxi)\in\Psi$ is that a ray with initial condition $(\bfx,\bfxi)$ arrives at $\bfz$ after time lapse $T_{\rm s}(\bfz)-T_{\rm s}(\bfx)$. Application of the SME and the DSE now implies that $\bfz=\bfx$.

Below we will define a transformation of covariables. To prepare for this, we introduce a smooth cutoff function $\chi:Z\tines X\tines\Conic\rightarrow\Real$ accordingly. A Fourier integral may be restricted to a neighborhood of the stationary point set at the expense of a regularizing operator. Therefore, $\chi(\bfz,\bfx,\bfxi)$ is set to 1 in the neighborhood of $\Psi$ and 0 elsewhere. This means that $\bfx$ is close to $\bfz$ in $\supp(\chi)$. The second issue is related to the DSE, which is required for the definition of the transformation. The cutoff $\chi$ is assumed to also remove singularities on a neighborhood of the direct rays. We set $\chi(\bfz,\bfx,\bfxi)$ to 0 if $\bfxi$ lies within a narrow conic set with solid angle $\Omega(\bfz)$ around the principal direction $\bfns(\bfz)$. The solid angle $\Omega(\bfz)$ will be discussed later. We can hence write 
\begin{equation} \label{equ_ISV_tildeRa_xi}
   \widetilde{R}_\rma r(\bfz) = (2\pi)^{-n} \iint \e^{\I\psi(\bfz,\bfx,\bfxi)}
   \chi(\bfz,\bfx,\bfxi)\, \mathrm{A_{HPF}}(\bfz,T_{\rm s}(\bfz),\bfx,\bfxi) d\bfxi\, r(\bfx)\, d\bfx .
\end{equation}
in which, of course, the integration domain is implicitly restricted to $\supp(\chi)$.
 
Next we introduce covariable $\bftheta$ to transform phase $\psi$ into the form $\bftheta\cdot(\bfz-\bfx)$. By definition $\bftheta(\bfz,\bfx,\bfxi)=-\int_0^1\partial_x\psi(\bfz,\tilde\bfx(\mu),\bfxi)d\mu$ in which \hbox{$\tilde\bfx(\mu)=\bfz+\mu(\bfx-\bfz)$}. The phase function now transforms into
\begin{equation}  \label{equ_ISV_quadraticphase}
   \psi(\bfz,\bfx,\bfxi) = \psi(\bfz,\bfz,\bfxi) + \int_0^1 \partial_\mu[\psi(\bfz,\tilde\bfx(\mu),\bfxi)] d\mu = \bftheta(\bfz,\bfx,\bfxi) \cdot (\bfz-\bfx).
\end{equation}
To better understand the transformation and to determine the new domain of integration, i.e.\ $\bftheta(\supp(\chi))$, and the Jacobian we apply the chain rule to the definition of $\psi$. This leads to 
\begin{equation*}
    \bftheta(\bfz,\bfx,\bfxi) = \bfxi + \int_0^1 \partial_t{\alpha}(\bfz,T_{\rm s}(\bfz)-T_{\rm s}(\tilde\bfx);\bfxi)\, \partial_\bfx T_{\rm s}(\tilde\bfx)\, d\mu.
\end{equation*}
There exists an $\check\bfx$ such that $(\bfz,\check\bfx,\bfxi)\in\Gamma_{T_{\rm s}(\bfz)-T_{\rm s}(\tilde\bfx)}$, i.e.\ $\check\bfx$ and $\bfz$ are connected by a ray. Note that $\check\bfx=\bfx_\Gamma(\bfz,T_{\rm s}(\bfz)-T_{\rm s}(\tilde\bfx),\bfxi)$ will do, see section \ref{subsec_FIO_S} for notation $\bfx_\Gamma$. By using the identities $\partial_t{\alpha}=-c(\check\bfx)|\bfxi|$ and $c(\tilde\bfx)\partial_\bfx T_{\rm s}(\tilde\bfx)=\bfns(\tilde\bfx)$, one gets
\begin{equation}  \label{equ_ISV_thetaofxi}
    \bftheta(\bfz,\bfx,\bfxi) = \bfxi - |\bfxi|\,\bfn(\bfz,\bfx,\bfxi) \quad\mathrm{with}\quad
    \bfn(\bfz,\bfx,\bfxi) = \int_0^1 \frac{c(\check\bfx)}{c(\tilde\bfx)}\bfns(\tilde\bfx)\, d\mu.
\end{equation}
The Jacobian now follows from this result. By an easily verified calculation, one finds 
\begin{equation} \label{equ_ISV_Jacobian}
  \left|\partial_\bfxi\bftheta\right| 
  = \left| \det\left({\rm I_n} 
        - \frac{\bfxi}{|\bfxi|}\otimes\bfn(\bfz,\bfx,\bfxi)\right) \right| 
  = \left| 1 - \frac{\bfxi}{|\bfxi|}\cdot\bfn(\bfz,\bfx,\bfxi) \right|.
\end{equation}

With these formulae at hand a sensible choice can be made for the solid angle $\Omega(\bfz)$. The angle must be large enough to meet the following inequality for all elements of $\supp(\chi)$:
\begin{equation} \label{equ_ISV_inequality}
   |\bfxi\cdot\bfn(\bfz,\bfx,\bfxi)| \, < \, |\bfxi|\min\{1,|\bfn(\bfz,\bfx,\bfxi)|^2\}. 
\end{equation}
We will now give the motivation. For $\bfxi\mapsto\bftheta(\bfz,\bfx,\bfxi)$ to be injective, given $(\bfz,\bfx)$, the Jacobian must be nonzero. This is true due to the inequality, which is nontrivial if $|\bfn|>1$. This affirms the local invertibility, and an easy exercise proofs its injectivity. A second argument concerns the domain of integration $\bftheta(\supp(\chi))$. The inequality guarantees that $\bftheta(\bfz,\bfx,\bfxi)\cdot\bfn(\bfz,\bfx,\bfxi)<0$ for all points in $\supp(\chi)$, which is nontrivial if $|\bfn|<1$. This fact will play a role in gluing $\widetilde{R}_\rma r$ and $\widetilde{R}_\rmb r$ together, which will be done in following paragraphs. Because $\bfx$ is in the neighborhood of $\bfz$, so are $\tilde\bfx$ and~$\check\bfx$. This implies that $\bfn(\bfz,\bfx,\bfxi)$ is close to $\bfns(\bfz)$, and $|\bfn|\approx 1$. This is as close as needed by narrowing the spatial part of the cutoff function $\chi$ around the diagonal of $Z\tines X$.

By using the new variable $\widetilde{R}_\rma r$'s integral expression (\ref{equ_ISV_tildeRa_xi}) transforms into 
\begin{equation}  \label{equ_tildeR_integral0}
   \widetilde{R}_\rma r(\bfz) = (2\pi)^{-n} \iint_{\bftheta(\supp(\chi))} \mathrm{A_{\widetilde{R}}}(\bfz,\bfx,\bftheta)
  \e^{\I\bftheta\cdot(\bfz-\bfx)}
  d\bftheta\, r(\bfx)\, d\bfx,
\end{equation}
where we define
\begin{equation} \label{eq:define_AR_threearg}
  \mathrm{A_{\widetilde{R}}}(\bfz,\bfx,\bftheta)
  = |\partial_\bfxi\bftheta|^{-1} 
        \chi(\bfz,\bfx,\bfxi)\, \mathrm{A_{HPF}}(\bfz,T_{\rm s}(\bfz),\bfx,\bfxi) .
\end{equation}
Concerning the integration domain it can be observed that, for a given $(\bfz,\bfx)$ the set $\bftheta(\supp(\chi))$ is contained in the halfspace $\{\bftheta\in\Conic\, |\, \bftheta\cdot\bfn<0\}$. 

\smallskip

(iv) While the expression (\ref{equ_tildeR_integral0}) defines a pseudodifferential operator of order 0, it is given in a non-standard from. It differs from a regular pseudodifferential operator, because the the amplitude $\mathrm{A_{\widetilde{R}}}(\bfz,\bfx,\bftheta)$ depends on $(\bfz,\bfx,\bftheta)$ and not only on $(\bfz,\bftheta)$. Another amplitude
that does not depend on $\bfx$ can be found by
\begin{equation} \label{equ_other_amplitude}
   \mathrm{A_{\widetilde{R}}}(\bfz,\bfz,\bftheta) + \sum_{k=1}^n \int_0^1 D_{\theta_k} \partial_{x_k} \mathrm{A_{\widetilde{R}}}(\bfz,\bfz+\mu(\bfx-\bfz),\bftheta)\, d\mu,
\end{equation}
which is an application of formulae (4.8)-(4.10) of Treves \cite{Treves80v1}. The first term is the principal symbol of $\widetilde{R}_\rma$, which has order 0. The second term in (\ref{equ_other_amplitude}) does not contribute to the principal part, it corresponds to a pseudodifferential operator of order $-1$. We will denote by $\mathrm{A_{\widetilde{R}}}(\bfz,\bftheta)$ (with two arguments) the symbol of $\widetilde{R}$. 

To evaluate of $\mathrm{A_{HPF}}$ (\ref{equ_ISV_ampHF}) on the diagonal one applies (\ref{equ_IVPFIO_inipha}), the relation $\partial_t{\alpha}(\bfz,0;\bfxi)=-c(\bfz)|\bfxi|$ for the phase and the result ${a}(\bfz,0;\bfxi)=\tfrac{\I}{2c(\bfz)|\bfxi|}$ for the amplitude. This yields
\begin{equation} \label{eq:A_HPF_formula}
   \mathrm{A_{HPF}}(\bfz,T_{\rm s}(\bfz),\bfz,\bfxi)
   = \left(1 - \frac{\bfns(\bfz)\cdot\bfxi}{|\bfxi|} \right)
              p_\rma(T_\rms(\bfz);\bfz,\bfxi)
   = |\partial_\bfxi\bftheta| \, p_\rma(T_\rms(\bfz);\bfz,\bfxi) ,
\end{equation}
see also (\ref{equ_ISV_Jacobian}). In view of (\ref{eq:define_AR_threearg})-(\ref{eq:A_HPF_formula}), we have $\text{p.s.}(\mathrm{A_{\widetilde{R}}})(\bfz,\bftheta) = \chi(\bfz,\bfz,\bfxi)\, p_{\rm a}(T_{\rm s}(\bfz);\bfz,\bfxi)$. Note that $\bfeta=\partial_\bfy{\alpha}=\bfxi$ holds on the diagonal, and that $\bfxi=\bfxi(\bftheta)$.

We now come back to the formal role of cutoff function $\chi$. By requiring $\chi(\bfz,\bfz,\bfxi)=1$ on $\supp(p_{\rm a}(T_{\rm s}(\bfz);\bfz,\bfxi))$ the cutoff function can be left out. This requirement is allowed because $\Omega(\bfz)$ in the construction of $\chi$ can be chosen arbitrarily tight by narrowing the spatial support of $\chi$ around the diagonal. Therefore
\begin{equation} \label{eq:ampl_A_tildeR}
  \text{p.s.}(\mathrm{A_{\widetilde{R}}})(\bfz,\bftheta) 
  = p_{\rm a}(T_{\rm s}(\bfz);\bfz,\bfxi(\bftheta)) .
\end{equation}

\smallskip

(v) A key step is the inclusion of both negative and positive frequencies. In section~\ref{sec_solution_ivp} we saw that ${a}(\bfx,t;\bfxi)\, \e^{\I\lambda{\alpha}(\bfx,t;\bfxi)}$ and ${b}(\bfx,t;- \bfxi)\, \e^{\I\lambda{\beta}(\bfx,t;- \bfxi)}$ have a symmetry relation: They yield complex conjugate contributions (note the $-$ signs). The consequences of this property can be traced through this proof. We find that
$\widetilde{R}_\rmb r(\bfz) = 
  (2\pi)^{-n} \iint_{-\bftheta(\supp(\chi))} 
  \overline{\mathrm{A_{\widetilde{R}}}(\bfz,-\bftheta)}
  \e^{\I\bftheta\cdot(\bfz-\bfx)} d\bftheta\, r(\bfx)\, d\bfx $,
and consequently, modulo a regularizing contribution,
\begin{equation}  \label{equ_tildeR_integral}
   \widetilde{R} r(\bfz) 
   = (2\pi)^{-n} \iint
    \left[ \mathrm{A_{\widetilde{R}}}(\bfz,\bftheta) +
    \overline{\mathrm{A_{\widetilde{R}}}(\bfz,-\bftheta)} \right]
    \e^{\I\bftheta\cdot(\bfz-\bfx)}
    d\bftheta\, r(\bfx)\, d\bfx  .
\end{equation}
The $\bftheta$-integration is over the full space because the definition of $\mathrm{A_{\widetilde{R}}}(\bfz,\bftheta)$ can be smoothly extended such that it is zero outside the domain $\bftheta(\supp(\chi))$. In view of (\ref{eq:ampl_A_tildeR}) this proves the claim.
\end{proof}


\begin{proof}[Proof of theorem~\ref{thm_R}]
The first step in deriving the imaging condition is to rewrite operators $H$, $K$ and $G$ (\ref{equ_ISV_G}). Let $\waux(\bfx,t)$ again be an auxiliary distribution. In this section $\widehat\waux(\bfx,\omega)$ will denote its temporal Fourier transform. Because $\waux(\bfx,t)=\frac{1}{2\pi}\int\e^{\I\omega t}\widehat{\waux}(\bfx,\omega)\, d\omega$, one has
\begin{equation}
\begin{split}   \label{equ_HKomega}
    \widehat{H\waux}(\bfx,\omega) &= 
                                    \frac{\tilde\sigma(\omega)}{A_{\rm s}(\bfx)}(\I\omega)^{-\!\frac{n+1}{2}} [\I\omega+c(\bfx)\bfns(\bfx)\cdot\partial_\bfx]\widehat{\waux}(\bfx,\omega) \\
    K\waux(\bfx)                  &= \frac{1}{2\pi}\int\e^{\I\omega T_{\rm s}(\bfx)}\widehat{\waux}(\bfx,\omega)\, d\omega.
\end{split}
\end{equation}
Applied to the reverse time continued wave field $u_{\rm r}(\bfx,\omega)$, equation (\ref{equ_ISV_tildeR}) becomes
\begin{equation}    \label{equ_Gomega0}
   \widetilde{R} r(\bfx) = \frac{1}{2\pi}\int\e^{\I\omega T_{\rm s}(\bfx)}
   \frac{\tilde\sigma(\omega)}{A_{\rm s}(\bfx)}(\I\omega)^{-\!\frac{n+1}{2}} [\I\omega+c(\bfx)\bfns(\bfx)\cdot\partial_\bfx]
   \widehat{u}_{\rm r}(\bfx,\omega) \, d\omega.
\end{equation}

The next step is to eliminate $T_{\rm s}(\bfx)$, $A_{\rm s}(\bfx)$ and $\bfns(\bfx)$ by expressing them in terms of the source field explicitly. The principal term of the geometrical optics approximation of the source (\ref{equ_SCA_hot_source}) is
\begin{equation*}
   \widehat{g}(\bfx,\omega) = A_{\rm s}(\bfx) \sigma(\omega) (\I\omega)^{\frac{n-3}{2}} \, \e^{-\I\omega T_{\rm s}(\bfx)}.
\end{equation*}
Function $\sigma$, introduced in (\ref{equ_SCA_hot_source}), is smooth and has value 1 except for a small neighborhood of the origin where it is 0. Later we will examine the effect of the subprincipal terms of the source and the division by its amplitude. One naively derives the following identities
\begin{equation}  \label{equ_naive_elimination}
\begin{split}
    \e^{\I\omega T_{\rm s}(\bfx)} \frac{1}{A_{\rm s}(\bfx)} (\I\omega)^{-\!\frac{n+1}{2}} &= \frac{\sigma(\omega)}{(\I\omega)^2\widehat{g}(\bfx,\omega)} \\
    c(\bfx)\bfns(\bfx) &= \frac{c(\bfx)^2 \partial_\bfx\widehat{g}(\bfx,\omega)}{-\I\omega\widehat{g}(\bfx,\omega)}
    = \frac{c(\bfx)^2 \partial_\bfx\overline{\widehat{g}(\bfx,\omega)}}{\I\omega\overline{\widehat{g}(\bfx,\omega)}},
\end{split}
\end{equation}
in which it is used that the second equation is real-valued. Substitution of involved factors occurring in the integral (\ref{equ_Gomega0}) yields
\begin{equation}    \label{equ_Gomega}
   \widetilde{R} r(\bfx) = \frac{1}{2\pi}\int\frac{\tilde\sigma(\omega)\sigma(\omega)}{\I\omega\widehat{g}(\bfx,\omega)} \Big[
   1+\frac{c(\bfx)^2 \partial_\bfx\overline{\widehat{g}(\bfx,\omega)}\cdot\partial_\bfx}{(\I\omega)^2\overline{\widehat{g}(\bfx,\omega)}}
   \Big] \widehat{u}_{\rm r}(\bfx,\omega) \, d\omega.
\end{equation}

We will finally argue that the division by the source amplitude is well-defined and that the subprincipal terms in the expansion for $\widehat{g}(\bfx,\omega)$ do not affect the expression for the principal symbol (\ref{eq:prin_symb_R}).
The source wave field is free of caustics by assumption. The transport equation yields that, on a compact domain in spacetime, there exists a lower bound $L>0$ for the principal amplitude, thus $|A_0(\bfx,\bfx_{\rm s},\omega)|\ge L$. Division by $A_0$ is therefore well-defined, and from its homogeneity and the inequality (\ref{equ_SCA_asymptotic_sum}) it can be deduced that there exists a constant $C>0$ such that $
   \left| \frac{A(\bfx,\bfx_{\rm s},\omega)}{A_0(\bfx,\bfx_{\rm s},\omega)} - 1\right| \le \frac{C}{1+|\omega|}$.
For $|\omega|$ sufficiently large, division by $A$ is therefore well-defined. We choose $1\!-\Omega$ wide enough such that all \mbox{$\omega\in\supp(\Omega)$} are high and satisfy \mbox{$\tilde\sigma(\omega)\sigma(\omega)=1$}. The difference between $\tfrac{1}{A_0}$ and $\tfrac{1}{A}$ is of lower order in $\omega$. 
By construction it holds that \mbox{$A_0(\bfx,\bfx_{\rm s},\omega) = A_{\rm s}(\bfx)\,(\I\omega)^{\frac{n-3}{2}}$} on $\supp(\Omega)$. Taking (\ref{equ_Gomega}) we replace $\tilde\sigma\sigma$ with $\Omega$ to define the imaging condition (\ref{equ_Gomega_Omega}).
\end{proof}

\section{Numerical examples} \label{sec_numerics}

In this section, we give numerical examples to support our theorems.
The general setup of the examples was as follows. First a model was
chosen, consisting of a background medium $c$, a medium perturbation
(contrast) $\delta c = c r$, a domain of interest and a computational
domain. The latter was larger than the domain of interest and included
absorbing boundaries. Data were generated by solving the inhomogeneous
wave equation with velocity $c + \delta c$, and a Ricker wavelet
source signature at position $\bfx_{\rm s}=(0,0)$, using an order
(2,4) finite difference scheme \cite{Cohen2002}. The direct wave was
eliminated. The operator (\ref{eq:one_way_factor}) could be applied in
the Fourier domain since in the examples $c$ was constant at the
surface. The backpropagated field was then computed using the finite
difference method, and the same for the source field. Finally the
imaging condition (\ref{equ_Gomega_Omega}) was applied to obtain an
approximate reconstruction of $\delta c$.

As we mentioned, only a partial reconstruction of $\delta c$ is
possible in realistic situations. Relation (\ref{equ_SCA_coivar}) and
the wave propagation restrict the directions of $\bfzeta$ where
inversion is possible. The frequency range present in the data also
restricts the length of $\bfzeta$, according to (\ref{equ_SCA_coivar})
and using that $| \bfxi | = c^{-1} | \omega |$.  To be able to compare
the original and reconstructed reflectivity we used bandlimited
functions for $\delta c$, which where obtained by multiplying a plane
wave with a window function. Such functions are localized in position,
by the support of the window, and in wave vector by the plane wave.

Our first example concerns a gradient type medium with $c(x_1,x_2) =
2.0 + 0.001 x_2$ with $c$ in km/s and $x_2$ in meters.  Our model
region was the square with $x_1$ and $x_2$ between $0$ and $2000$
meters. The purpose was to show a successful reconstruction of
velocity perturbations at different positions and with different
orientations in the model. We therefore chose for $\delta c$ a linear
combination of three wave packets at different locations, with central
wave vector well within in the inversion aperture. We included one
with large dip, as one of the interesting abilities of RTM is imaging
of large dips. The results of the above procedure are shown in
Figures~\ref{fig:ex3dv,ex3migim} and
~\ref{fig:ex3tracesx400,ex3tracesx1400,ex3tracesz600}. The
reconstruction of the phase is excellent. However, the reconstructed
amplitude is around 8-10 \% smaller than the original amplitude.
Possible explanations for this are inaccuracies related to the
linearization and to a limited aperture.

\def\widthfig{6.5cm}
\begin{figure}[ht]
   \begin{center}
      \includegraphics[width=\widthfig]{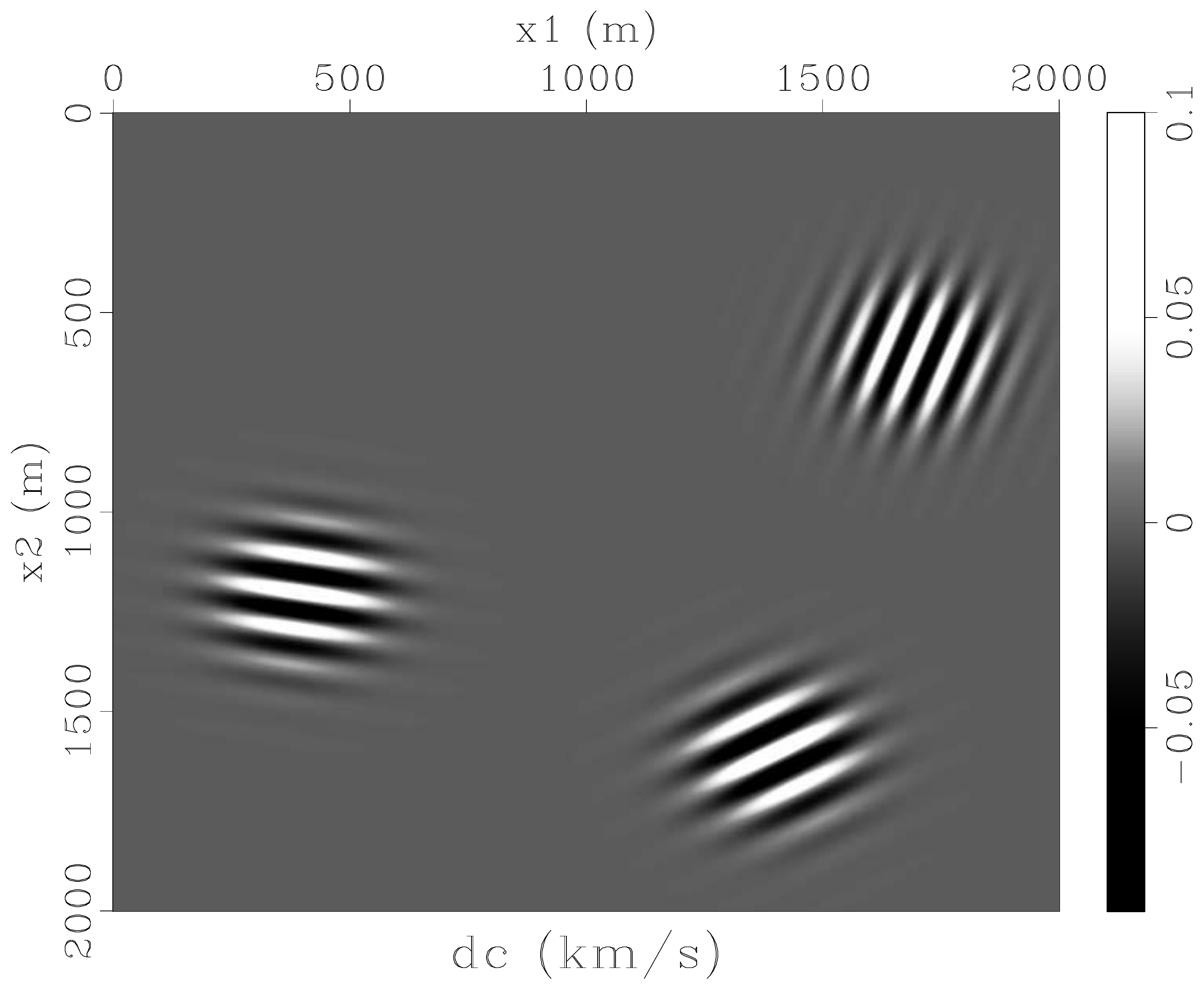}
      \hspace{1.5cm}
      \includegraphics[width=\widthfig]{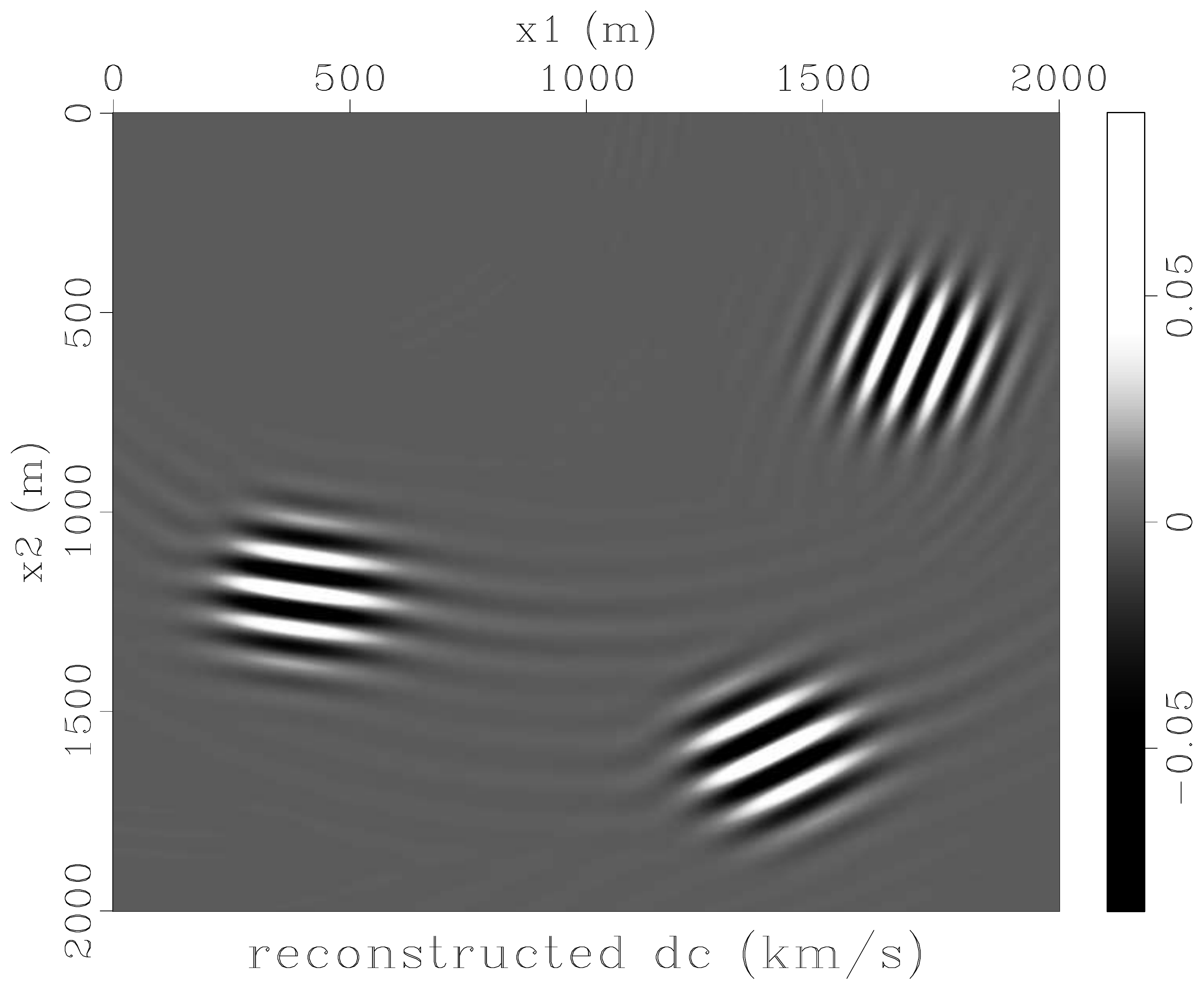}
      \\ \vspace{0.2cm} {\footnotesize (a) \hspace{7.5cm} (b)}
   \end{center}
   \caption{Example 1: Velocity perturbation and reconstructed velocity 
   perturbation. 
   The background medium is a gradient $c = 2.0 + 0.001 x_2$,
   with $x_2$ in meters and $c$ in km/s.}
   \label{fig:ex3dv,ex3migim}
\end{figure}
\begin{figure}[ht]
   \begin{center}
      \includegraphics[width=4.4cm]{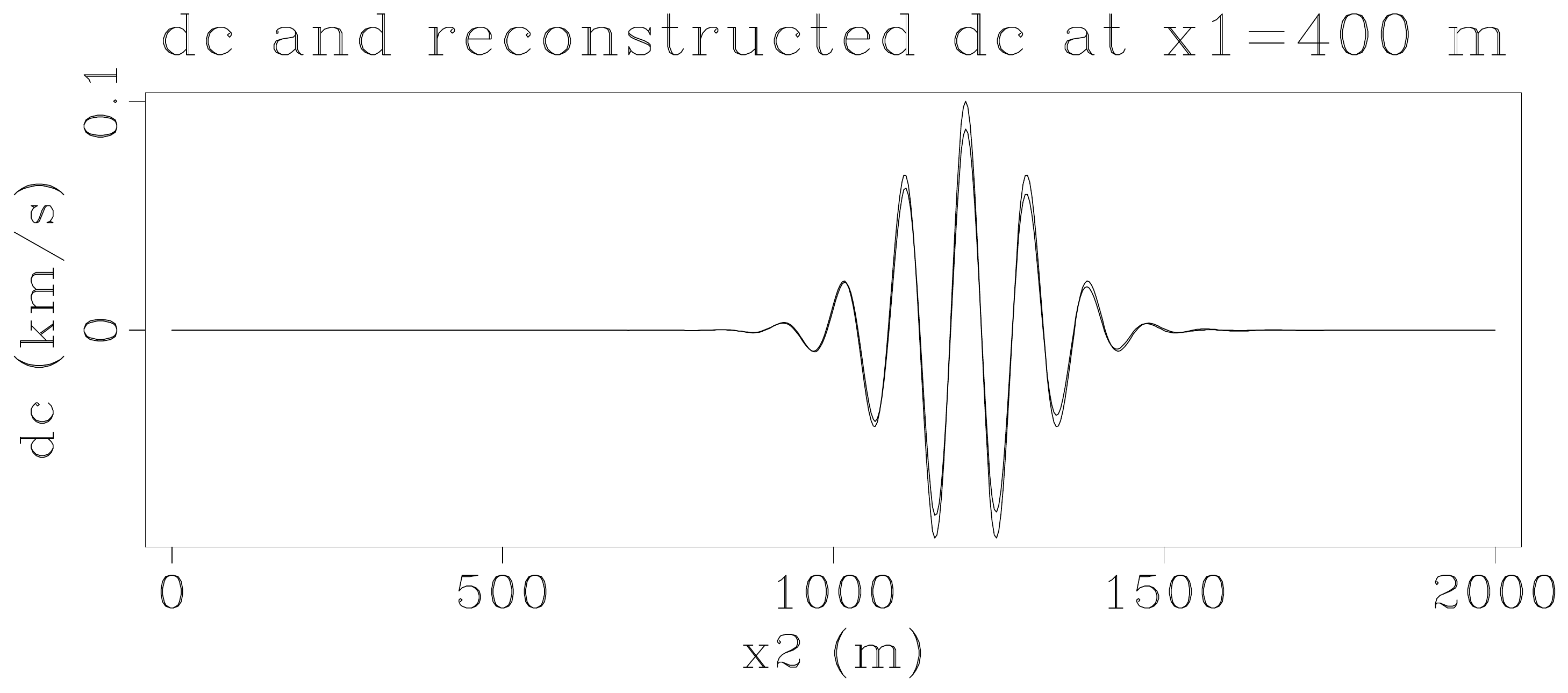}
      \hspace{0.6cm}
      \includegraphics[width=4.4cm]{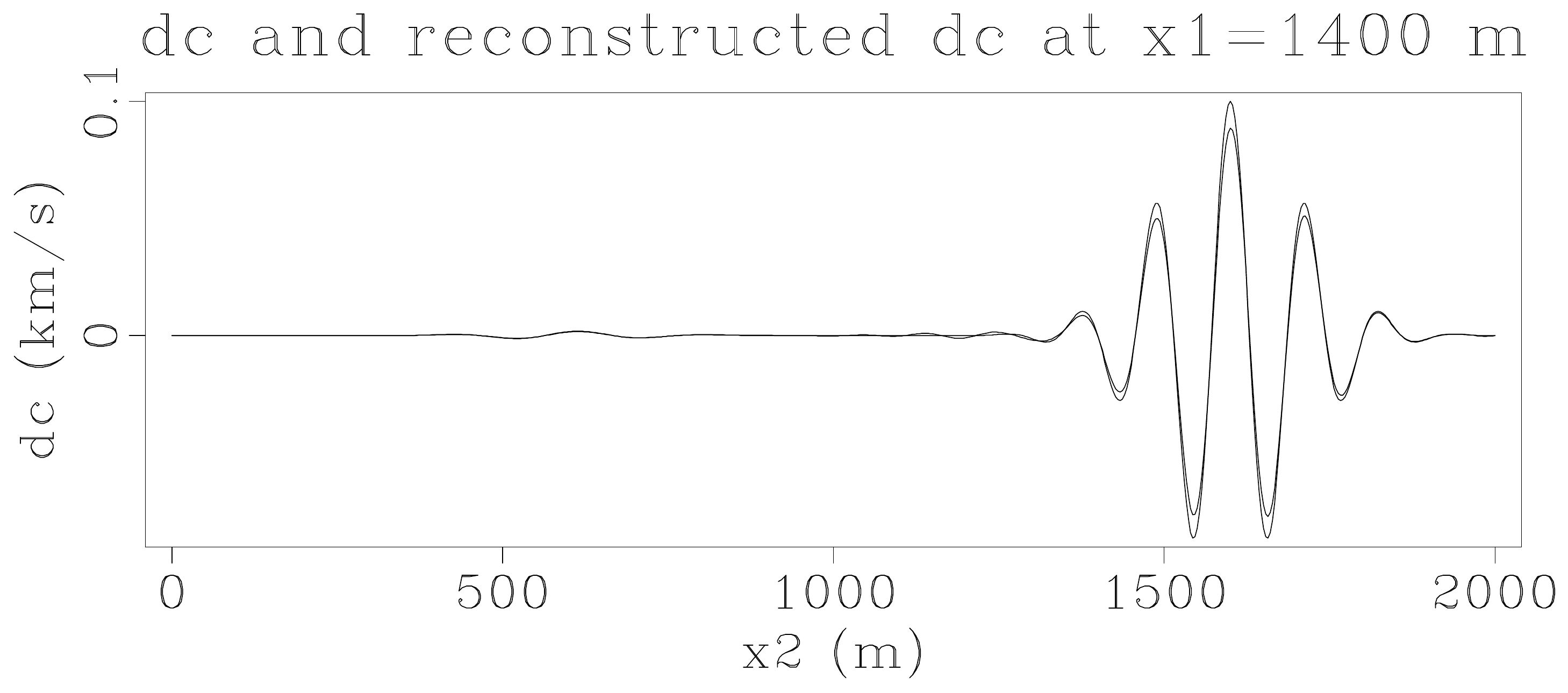}
      \hspace{0.6cm}
      \includegraphics[width=4.4cm]{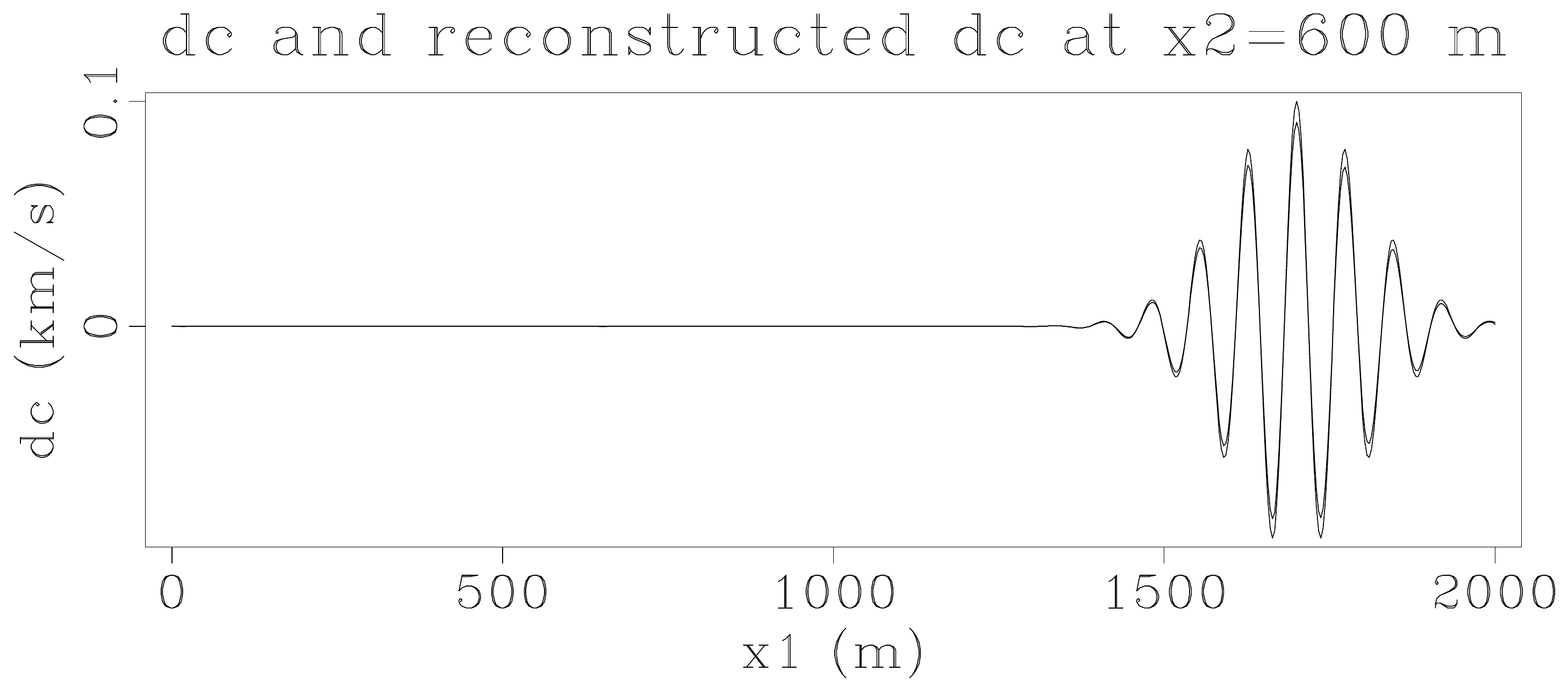}
   \end{center}
   \caption{Example 1: Comparison of some traces from 
   Figure~\ref{fig:ex3dv,ex3migim} at
   $x_1=400$ m, $x_1=1400$ m and $x_2=600$ m.}
   \label{fig:ex3tracesx400,ex3tracesx1400,ex3tracesz600}
\end{figure}

Our second example concerns a bandlimited continuous reflector. For a
continuous reflector one might expect less loss in amplitude when
compared to the localized velocity perturbations. One of the strengths
of RTM and wave equation migration in general is that multipathing is
easily incorporated, where in our case of single source RTM,
multipathing is only allowed between the reflector and the receiver
point. To see this in an example we included in our background model a
low velocity lens at $(800,1200)$ m.  The background medium including
some rays, as well as some data are plotted in
Figure~\ref{fig:ex6velrays,ex6simsingle}. The velocity perturbation
was located at $x_2=1600$~m.  The results of this example are given in
Figure~\ref{fig:ex6dv,ex6migim}. The reconstruction of the phase is
again excellent. The amplitude varies somewhat depending on location,
being about $0$-$10$ \% too low. The smooth tapering which was applied
has diminished smiles and amplitude variations, but not fully
eliminated them. The multipathing leads to singularities in the
inverse of the source field $\hat{u}_{\rm inc}^{-1}$, around
$(x_1,x_2) = (1900,1000)$ m, which leads to the two artifacts that can
be seen there.

\begin{figure}[ht]
   \begin{center}
      \includegraphics[width=\widthfig]{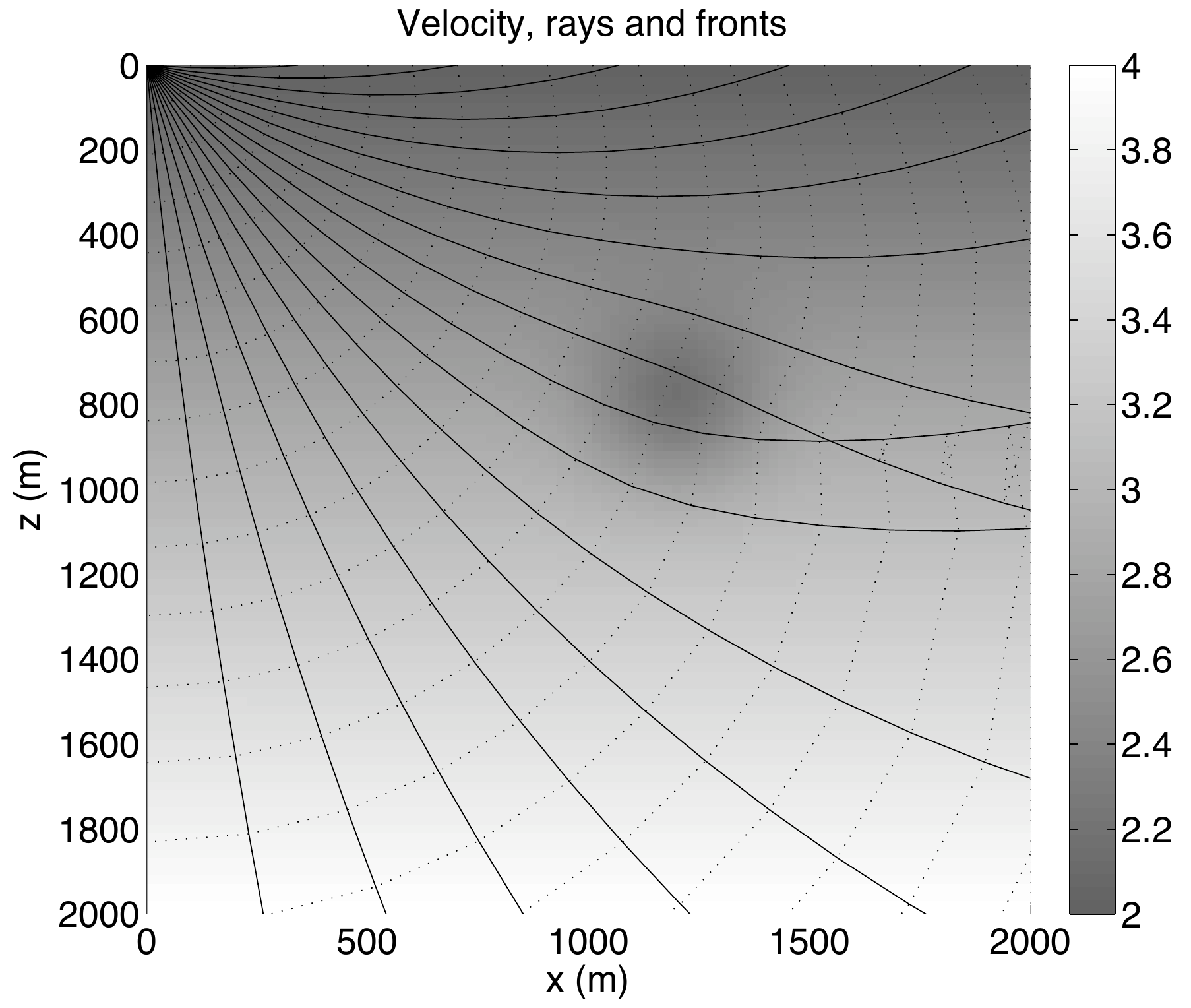}
      \hspace{1.5cm}
      \includegraphics[width=\widthfig]{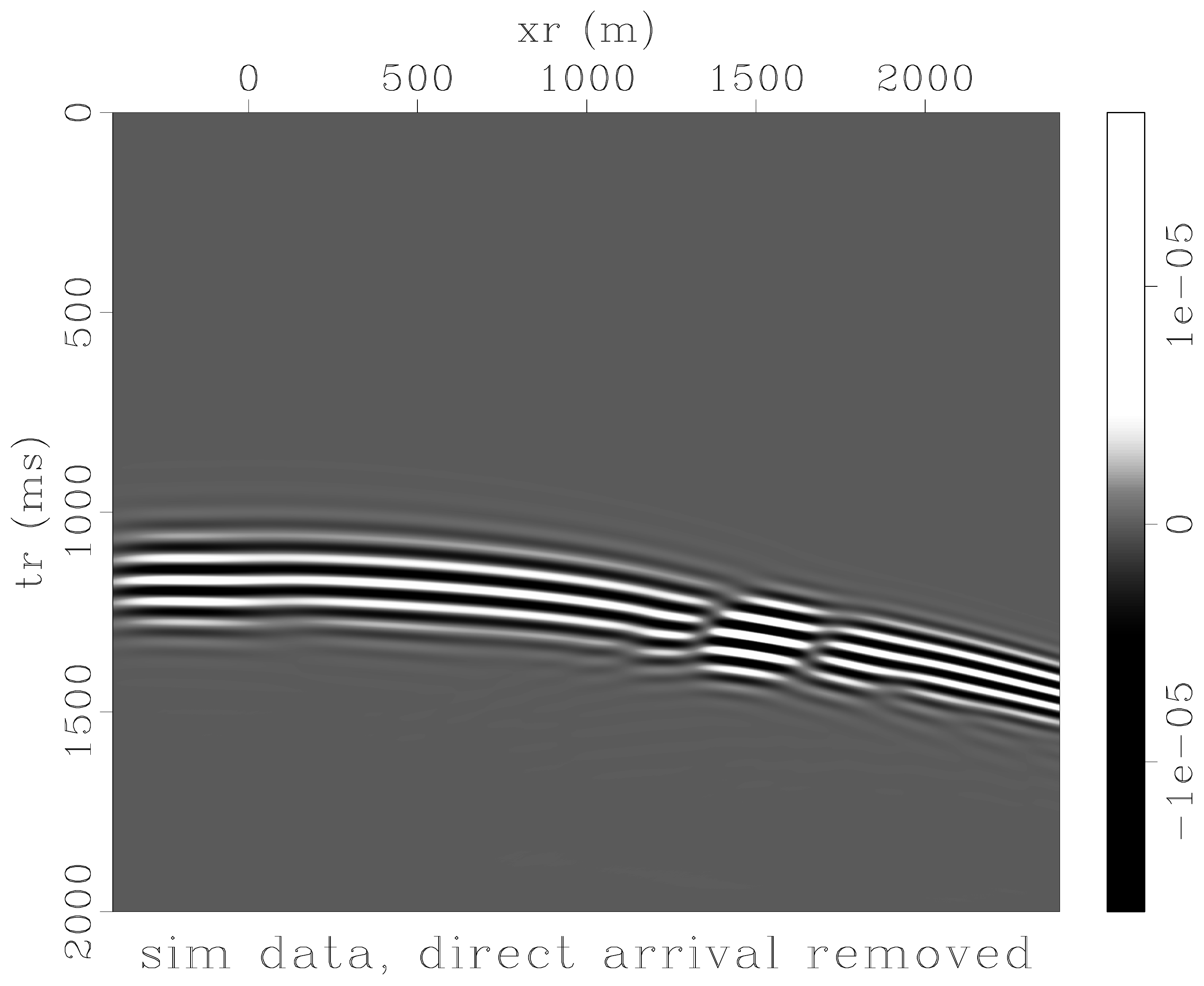}
      \\ \vspace{0.1cm} {\footnotesize (a) \hspace{7.5cm} (b)}
   \end{center}
   \caption{Example 2: (a) A velocity model with some rays; 
   (b) Simulated data, with direct arrival removed.}
   \label{fig:ex6velrays,ex6simsingle}
\end{figure}
\begin{figure}[ht]
   \begin{center}
      \includegraphics[width=\widthfig]{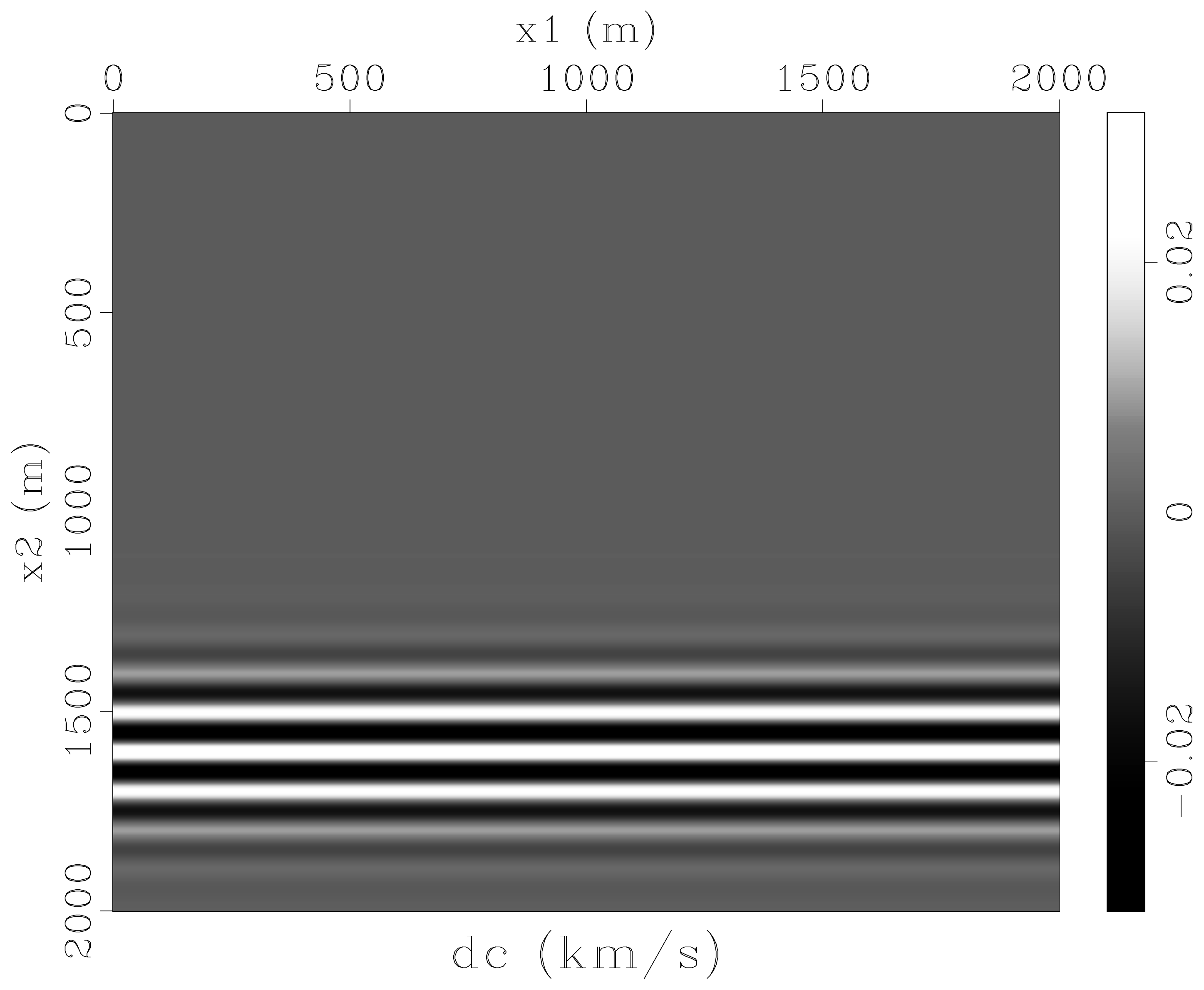}
      \hspace{1.5cm}
      \includegraphics[width=\widthfig]{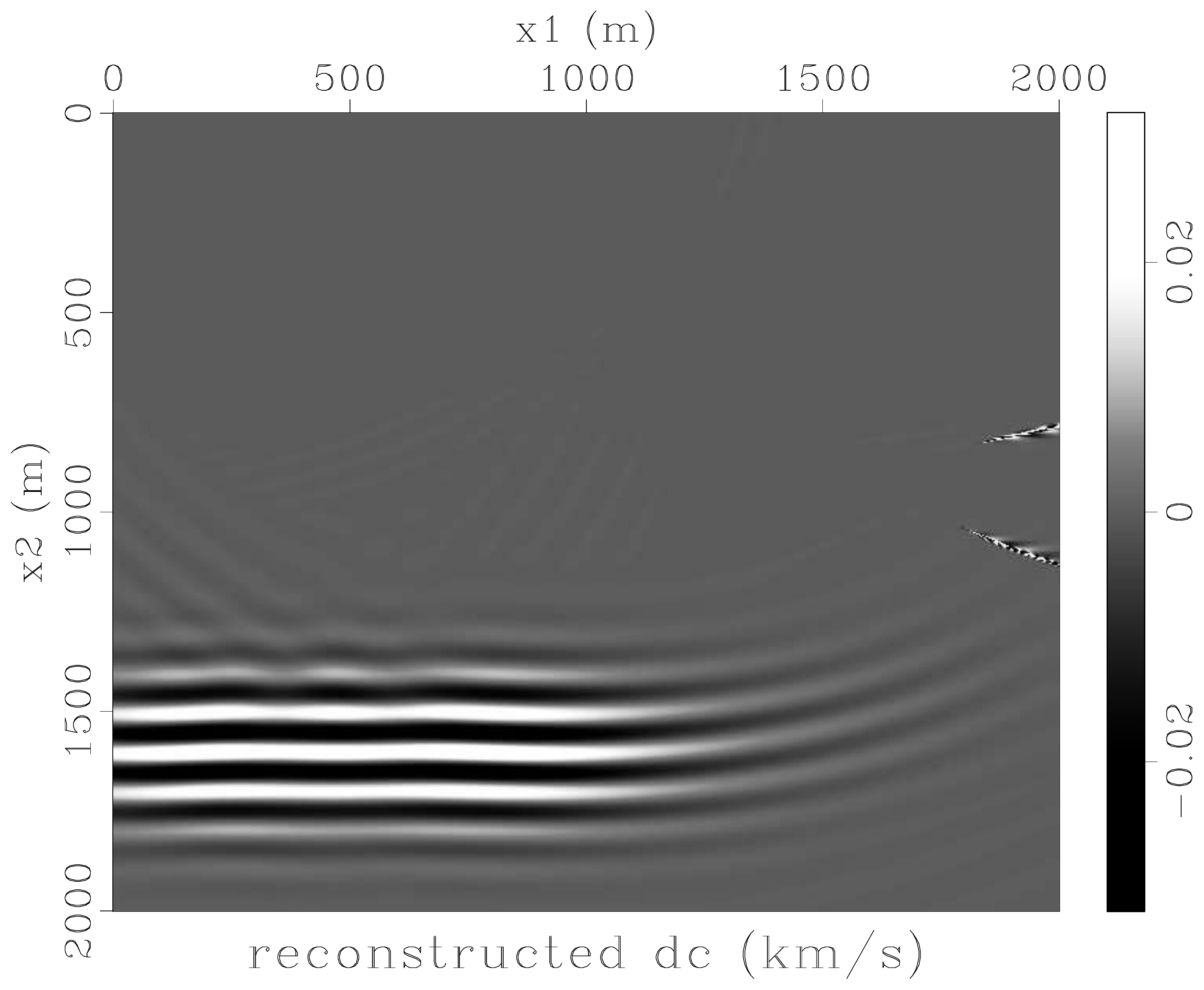}
      \\ \vspace{0.1cm} {\footnotesize (a) \hspace{7.5cm} (b)}
   \end{center}
   \caption{Example 2: (a) Velocity perturbation; 
   (b) Partial reconstruction of the velocity perturbation.}
   \label{fig:ex6dv,ex6migim}
\end{figure}

\section{Discussion}
\label{subsec_conc_disc}

We presented a comprehensive analysis of RTM-based imaging, and
introduced an imaging condition condition involving only local (data
point and image point) operators which yields a parametrix for the
single scattering problem for a given point source.

We make the following observations concerning our inverse scattering
procedure: (i) The symbol of the normal operator associated with a
single point source contains a singularity which has been observed in
the form of ``low-frequency'' artifacts \cite{yoonMS:2004,
  mulderP:2004, fletcherFKA:2005, Xie2006, guittonKB:2007}. 
Our imaging condition yields a parametrix and naturally avoids this singularity.
(ii) The square-root operator (\ref{eq:one_way_factor}), a factor of $F_M$ introduced in
section~\ref{sec_up_down_PsDO}, can be removed with dual sensor
(streamer) data, that is, if the surface-normal derivative of the wave
field is measured. We note that $F_M$ is available only
microlocally. (iii) Division by the source field, in frequency, can
lead to poor results when its amplitude is small. There are two main
reasons why this can occur. First, a realistic source signature can
yield very small values for particular frequencies in its amplitude
spectrum. Moving averaging in frequency typically resolves this
situation \cite{Guitton2006, Chattopadhyay2008, Costa2009}. Secondly,
the illumination due to propagation in a velocity model of high
complexity may result in small values; spatial averaging over small
neighborhoods of the image points may be benificial. (The
cross-correlation imaging has been adapted by normalization with the
source wave field energy at the imaging points as a proxy to inverse
scattering \cite{Claerbout1971, Biondi2006}.)

The acquisition aperture, and associated illumination, is intimately
connected to the resolution operator $R$. This operator is
pseudodifferential and the support of its symbol expresses which parts
of the contrast or reflectivity can be recovered from the available
data. Partial reconstruction is optimally formulated in terms of
curvelets or wave packets. A detailed procedure, making use of the
fact that the single scattering or imaging operator is associated with
a canonical graph, can be found in \cite{dHSUvdH}; see also
\cite{HerrmannMoghaddamStolk2008}.

We have addressed the single-source acquisition geometry, which arises
naturally in RTM. One can anticipate an immediate extension of our
reconstruction to multi-source data, but a major challenge arises
because the single source reconstructions are only partial. Because
each of the single source images result in reconstructions at
different sets of points and orientations, in general, which are not
identified within the RTM algorithm, averaging must be
excluded. However, techniques from microlocal analysis can be invoked
to properly exploit the discrete multi-source acquisition
geometry. (We note that in the case of open sets of sources the
generation of source caustics will be allowed.)

\subsection*{Acknowledgement}

CCS and TJPMOtR were supported by the Netherlands Organisation for
Scientific Research through VIDI grant 639.032.509.  MVdH was
supported in part by the members of the Geo-Mathematical Imaging Group
at Purdue University.

\bibliography{references}
\bibliographystyle{plain}

\end{document}